\DeclareMathOperator{\iindex}{index}
\DeclareMathOperator{\sign}{sign}
\DeclareMathOperator{\supp}{supp}
\DeclareMathOperator{\singsupp}{sing\ supp}
\DeclareMathOperator{\Ran}{Ran}
\DeclareMathOperator{\Dom}{Dom}
\DeclareMathOperator{\Ker}{Ker}
\DeclareMathOperator*{\slim}{s-lim}
\renewcommand\Im{\hbox{{\rm Im}}\,}
\renewcommand\Re{\hbox{{\rm Re}}\,}
\newcommand{\abs}[1]{\lvert#1\rvert}
\newcommand{\norm}[1]{\lVert#1\rVert}
\newcommand{\R}{{\mathbb R}}
\newcommand{\N}{{\mathbb N}}
\newcommand{\C}{{\mathbb C}}
\newcommand{\calH}{{\mathcal H}}
\newcommand{\calK}{{\mathcal K}}
\numberwithin{equation}{section}
\theoremstyle{plain}
\newtheorem{theorem}{\bf Theorem}[section]
\newtheorem{lemma}[theorem]{\bf Lemma}
\newtheorem{proposition}[theorem]{\bf Proposition}
\newtheorem*{theorem*}{\bf Theorem}
\theoremstyle{definition}
\theoremstyle{remark}
\newtheorem*{remark*}{\bf Remark}
\newtheorem{remark}[theorem]{\bf Remark}
\renewcommand{\qed}{\vrule height7pt width5pt depth0pt}
\newcommand{\wt}{\widetilde}
\newcommand{\ess}{{\rm ess}}
\newcommand{\ac}{{\rm ac}}
\begin{document}

\title[Discontinuous functions of self-adjoint operators]{Spectral theory of 
discontinuous functions of self-adjoint operators: essential spectrum}
\author{Alexander Pushnitski}
\address{Department of Mathematics\\
King's College London\\ 
Strand, London WC2R  2LS, U.K.}
\curraddr{}
\email{alexander.pushnitski@kcl.ac.uk}
\thanks{}
\date{\today}

\subjclass[2000]{Primary 47A40; Secondary 35P25, 47B25, 47F05}

\keywords{scattering matrix, essential spectrum, spectral projections}


\begin{abstract}
Let $H_0$ and $H$ be  self-adjoint operators in a Hilbert space.
In the scattering theory framework, we describe the essential
spectrum of the difference $\varphi(H)-\varphi(H_0)$ for piecewise continuous
functions $\varphi$. This description involves the scattering matrix
for the pair $H$, $H_0$. 
\end{abstract}

\maketitle

\section{Introduction}\label{sec.aa}

Let $H_0$ and $H$ be self-adjoint operators in a Hilbert space $\calH$  and suppose that the difference 
$V=H-H_0$ is a compact operator.  
If $\varphi:\R\to\R$ is a continuous function which tends to zero at infinity then a well known 
simple argument shows that the difference
$$
\varphi(H)-\varphi(H_0)
$$
is a compact operator. 
Moreover, there is a large family of results that assert that if 
the function $\varphi$ is sufficiently ``nice'' and $V$ belongs to 
some Schatten--von Neumann class of compact operators, 
then $\varphi(H)-\varphi(H_0)$ 
also belongs to this class. 
See \cite{Krein,BS1} or the survey \cite{BS2} for early results of this type; 
they   were later made much more precise by V.~V.~Peller,
see \cite{Peller1,Peller2}. See also \cite{Peller3,Peller4} 
for  some recent progress in this area.

In all of the above mentioned results, the function $\varphi$
is assumed to be continuous. If $\varphi$ has discontinuities
on the essential spectrum of $H_0$, then 
the difference $\varphi(H)-\varphi(H_0)$ in general 
fails to be compact even if $V$ is a rank one operator; 
see \cite{Krein,KM}.
In this paper
we study the essential spectrum of $\varphi(H)-\varphi(H_0)$ 
for piecewise continuous functions $\varphi$. 
Some initial results in this direction have been obtained in 
\cite{Push}; we begin by describing these results.

For a Borel set $\Lambda\subset\R$, we denote by $E(\Lambda)$ (resp. $E_0(\Lambda)$)
the spectral projection of $H$ (resp. $H_0$) corresponding to the set $\Lambda$. 
If $\Lambda$ is an interval, say $\Lambda=[a,b)$, we write $E[a,b)$ instead of $E([a,b))$ 
in order to make our formulas more readable. 
In \cite{Push}, under  some 
assumptions typical for smooth scattering theory, it was proven that
for compact $V$ one has
\begin{equation}
\sigma_\ess\bigl(E(-\infty,\lambda)-E_0(-\infty,\lambda)\bigr)
=
\bigl[-\tfrac12\norm{S(\lambda)-I},\tfrac12\norm{S(\lambda)-I}\bigr],
\label{a1}
\end{equation}
where $S(\lambda)$ is the scattering matrix for the pair $H_0$, $H$. 

In this paper, we prove the following generalisation of \eqref{a1}. 
Assume that for some $\lambda\in\R$ the derivatives
\begin{equation}
\frac{d}{d\lambda}\abs{V}^{1/2}E_0(-\infty,\lambda)\abs{V}^{1/2}, 
\qquad
\frac{d}{d\lambda}\abs{V}^{1/2}E(-\infty,\lambda)\abs{V}^{1/2}
\label{a1a}
\end{equation}
exist in the operator norm. Then we prove (see Theorem~\ref{th1})
that the limit 
\begin{equation}
\alpha(\lambda)
=
\lim_{\varepsilon\to+0}\frac{\pi}{2\varepsilon}
\norm{E_0(\lambda-\varepsilon,\lambda+\varepsilon)
VE(\lambda-\varepsilon,\lambda+\varepsilon)}
\label{a1b}
\end{equation}
exists and the identity
\begin{equation}
\sigma_\ess\bigl(E(-\infty,\lambda)-E_0(-\infty,\lambda)\bigr)
=
[-\alpha(\lambda),\alpha(\lambda)]
\label{a1c}
\end{equation}
holds true. 
If the standard assumptions of either trace class or smooth variant of 
scattering theory are fulfilled, we prove (see Section~\ref{sec.a4})
that $\alpha(\lambda)=\frac12\norm{S(\lambda)-I}$. 
Thus, \eqref{a1} becomes a corollary of \eqref{a1c}. 
Using \eqref{a1c}, we obtain the following results: 
\begin{enumerate}[\rm (i)]
\item
Applying \eqref{a1c} in the trace class framework, 
we prove 
(see Section~\ref{sec.a3} for the definition of the core of the absolutely continuous spectrum):
\begin{theorem*}
Let $V$ be a trace class operator. Then for a.e. $\lambda\in\R$ 
the relation \eqref{a1c} holds true and for a.e. $\lambda$ 
in the core 
of the absolutely continuous spectrum of $H_0$, the relation \eqref{a1} holds true. 
\end{theorem*}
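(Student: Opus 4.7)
The plan is to deduce both statements from Theorem~\ref{th1} together with the identification $\alpha(\lambda)=\tfrac12\norm{S(\lambda)-I}$ to be proved in Section~\ref{sec.a4}. Under the trace-class hypothesis on $V$, it suffices to verify that the two derivatives in~\eqref{a1a} exist in the operator norm at a.e.\ $\lambda\in\R$.

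Since $V$ is trace class, the operator $\abs{V}^{1/2}$ is Hilbert--Schmidt, and hence the two functions
\[
F_0(\lambda):=\abs{V}^{1/2}E_0(-\infty,\lambda)\abs{V}^{1/2},
\qquad
F(\lambda):=\abs{V}^{1/2}E(-\infty,\lambda)\abs{V}^{1/2}
\]
take values in the positive trace class and are monotone non-decreasing in $\lambda$, uniformly bounded in trace norm by $\Tr\abs{V}$. The first step is to prove the classical fact that any such monotone trace-class-valued function is differentiable in trace norm, and \emph{a fortiori} in operator norm, at Lebesgue-a.e.\ point. The key observation is the identity $\norm{F_0(\lambda+h)-F_0(\lambda)}_1=\Tr(F_0(\lambda+h)-F_0(\lambda))$, valid because the increments are positive; combined with the Radon--Nikodym decomposition of the scalar measure $d\Tr F_0$ and scalar Lebesgue differentiation applied to the monotone function $\Tr F_0(\lambda)$, this forces the difference quotient of $F_0$ to converge in trace norm at a.e.\ $\lambda$. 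The same argument applies verbatim to $F$.

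Once this differentiability is in place, Theorem~\ref{th1} applies at every such $\lambda$ and yields both the limit $\alpha(\lambda)$ from~\eqref{a1b} and the identity~\eqref{a1c}; this proves the first assertion. For the second assertion, the trace-class identification $\alpha(\lambda)=\tfrac12\norm{S(\lambda)-I}$ from Section~\ref{sec.a4}, valid at a.e.\ $\lambda$ in the core of the absolutely continuous spectrum of $H_0$ (where the scattering matrix is well defined under trace-class perturbations by Birman's theorem), combines with~\eqref{a1c} to produce~\eqref{a1}.

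The main obstacle will be the passage from scalar to operator-valued Lebesgue differentiation of $F_0$ and $F$. Although this is a well-known step in trace-class scattering theory, it must be carried out carefully: one uses the full strength of positivity and monotonicity---in particular, the identity $\norm{\Delta F_0}_1=\Tr(\Delta F_0)$ for positive increments $\Delta F_0$---to upgrade the scalar a.e.\ differentiability of $\Tr F_0$ to trace-norm differentiability of $F_0$ itself, with the singular part of $d\Tr F_0$ contributing nothing at Lebesgue-a.e.\ point.
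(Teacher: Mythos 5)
Your overall architecture matches the paper's: reduce to the a.e.\ existence in operator norm of the derivatives \eqref{a1a}, apply Theorem~\ref{th1} to get \eqref{a1b} and \eqref{a1c}, and then identify $\alpha(\lambda)$ with $\tfrac12\norm{S(\lambda)-I}$ on the core. But both halves of your plan have gaps. For the differentiability step, the mechanism you describe does not close the argument: scalar a.e.\ differentiability of the monotone function $\lambda\mapsto\Tr F_0(\lambda)$, together with $\norm{\Delta F_0}_1=\Tr(\Delta F_0)$, only shows that the \emph{trace norms} of the difference quotients converge at a.e.\ $\lambda$; it does not prevent the ``direction'' of the increments from oscillating, so it does not by itself yield convergence of the operator-valued difference quotients. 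The standard proof (this is Lemma~\ref{prp.c2}(i) in the paper, quoted from Yafaev, Section~6.1) needs an extra ingredient: either a.e.\ differentiability of the scalar measures $(E_0(\cdot)G^*e_n,G^*e_m)$ for an orthonormal basis $\{e_n\}$, combined with the fact that for positive trace-class operators weak convergence plus convergence of traces implies trace-norm convergence; or, equivalently, the Radon--Nikodym property of the trace class applied to the operator-valued measure itself, followed by the vector-valued Lebesgue differentiation theorem. You correctly single this out as the delicate point, but the positivity-plus-trace argument alone is not enough.

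The more serious gap is the second assertion. The identity $\alpha(\lambda)=\tfrac12\norm{S(\lambda)-I}$, i.e.\ \eqref{a9}, is not available to be cited: in the trace-class framework it is part of the theorem being proved, and your appeal to ``Section~\ref{sec.a4}'' is circular. In the paper this identification occupies most of Section~\ref{sec.c2}. One needs the a.e.\ existence of the boundary values $T_0(\lambda+i0)$, $T(\lambda+i0)$ (again from Lemma~\ref{prp.c2}); the construction of the unitary operator $\wt S(\lambda)$ in \eqref{c3} together with the computation $\tfrac12\norm{\wt S(\lambda)-I}=\alpha(\lambda)$ of Lemma~\ref{lma.c8}, which rests on $\pi F_0'(\lambda)=B_0(\lambda)$, $\pi F'(\lambda)=B(\lambda)$ and the resolvent identities \eqref{c7a}, \eqref{c7b}; and finally the stationary representation \eqref{c5}--\eqref{c6} of the scattering matrix, which yields $\norm{S(\lambda)-I}=\norm{\wt S(\lambda)-I}$ (Lemma~\ref{prp.c3}). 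Birman's theorem gives existence and completeness of the wave operators, hence a well-defined $S(\lambda)$ for a.e.\ $\lambda$ in the core, but it does not supply the quantitative link between $S(\lambda)$ and the limit \eqref{a1b}; that link is the substantive content missing from your proposal.
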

This is stated as Theorem~\ref{cr.a3} below.
\item
In Section~\ref{sec.a6}
we describe the essential spectrum of the difference 
$\varphi(H)-\varphi(H_0)$ for piecewise continuous functions $\varphi$. 
\item
In Section~\ref{sec.a5} we give a convenient criterion for 
$E_0(-\infty,\lambda)$, $E(-\infty,\lambda)$ to be a Fredholm pair of 
projections. 
\item
In Sections~\ref{sec.a7}, \ref{sec.a8}, 
we give some applications to the Schr\"odinger operator. 
\end{enumerate}

In the proof of \eqref{a1c} we use the technique of \cite{Push} with 
some minor improvements. In (ii) above, we follow the 
method of proof used by S.~Power in his description \cite{Power} of the 
essential spectrum of Hankel operators with piecewise continuous
symbols.

Finally, we note that a description of the absolutely continuous 
spectrum of the difference
\begin{equation}
E(-\infty,\lambda)-E_0(-\infty,\lambda)
\label{a2}
\end{equation}
is also available in terms of the spectrum of the scattering matrix; 
see \cite{Push,PYaf}.

\section{Main results}\label{sec.a}

\subsection{The definition of the operator  $H$}\label{sec.a2}

Let $H_0$ be a self-adjoint operator in a Hilbert space $\calH$. 
We would like to introduce a self-adjoint perturbation $V$ 
and define the sum $H=H_0+V$. 
Informally speaking, we would like to define $H_0+V$ as a 
quadratic form sum; however, since we do not assume 
$H_0$ or $V$ to be semi-bounded, the language of quadratic
forms is not applicable here. The definition of $H_0+V$ 
requires some care; we follow the approach which goes back at least to  \cite{Kato2}
and was developed in more detail in  \cite[Sections 1.9, 1.10]{Yafaev}.
We assume that $V$ is factorised as $V=G^*JG$, 
where $G$ is an operator from $\calH$ to an auxiliary
Hilbert space $\calK$ and $J$ is an operator
in $\calK$. We assume that 
\begin{equation}
\begin{split}
&J=J^* \text{ is bounded in $\calK$, } 
\\
\Dom \abs{H_0}^{1/2}\subset& \Dom G \text{ and }
G(\abs{H_0}+I)^{-1/2}\text { is compact.}
\end{split}
\label{a3}
\end{equation}
We denote by $(\cdot,\cdot)$ and $\norm{\cdot}$ the 
inner product and the norm in $\calH$ and by 
$(\cdot,\cdot)_\calK$ and $\norm{\cdot}_\calK$
the inner product and the norm in $\calK$.
In applications a  factorisation $V=G^*JG$ with these properties
often arises  naturally from the structure of the problem.
In any case, one can always take
$\calK=\calH$, $G=\abs{V}^{1/2}$ and $J=\sign(V)$.

For $z\in\C\setminus \sigma(H_0)$, we denote
$R_0(z)=(H_0-zI)^{-1}$. Formally, we define the operator $T_0(z)$
(sandwiched resolvent) by  setting
\begin{equation}
T_0(z)=GR_0(z)G^*;
\label{a19}
\end{equation}
more precisely, this means
$$
T_0(z)=G(\abs{H_0}+I)^{-1/2} (\abs{H_0}+I)R_0(z) (G(\abs{H_0}+I)^{-1/2})^*.
$$
By \eqref{a3}, the operator  $T_0(z)$ is compact.
It can be shown 
(see \cite[Sections 1.9,1.10]{Yafaev})
that
under the assumption \eqref{a3} the operator
$I+T_0(z)J$ 
has a bounded inverse for all $z\in\C\setminus\R$ and  
that the operator valued function
\begin{equation}
R(z)=R_0(z)-(G R_0(\overline z))^*J (I+T_0(z)J)^{-1} GR_0(z), 
\quad z\in\C\setminus\R,
\label{a4}
\end{equation}
is a resolvent of a self-adjoint operator; we denote this 
self-adjoint operator by $H$. 
Thus, formula \eqref{a4}, which is usually treated as 
a resolvent identity for $H_0$ and $H=H_0+V$, is 
now accepted as the definition of $H$. 
If $V$ is bounded, then the above defined operator $H$
coincides with the operator sum $H_0+V$.
If $H_0$ is semi-bounded 
from below, then \eqref{a3} means that $V$ is $H_0$-form
compact and then $H$ coincides with the 
quadratic form sum $H_0+V$ 
(in the sense of the KLMN Theorem, see \cite{RS2}). 
In general, we have
\begin{equation}
(f_0, Hf)=(H_0f_0,f)+(JGf_0,Gf)_\calK, 
\quad \forall f_0\in\Dom H_0, \quad \forall f\in\Dom H.
\label{a4a}
\end{equation}
Finally,   it is not difficult to check that by \eqref{a3} and \eqref{a4},
the resolvent $R(z)$ can be written as
\begin{equation}
R(z)=(\abs{H_0}+I)^{-1/2}\mathcal B(z)(\abs{H_0}+I)^{-1/2}
\label{a4b}
\end{equation}
with a bounded operator $\mathcal B(z)$. 
In particular, this implies that 
\begin{equation}
\text{the operator $GR(z)$ is well defined and
compact for any $z\in\C\setminus\R$. }
\label{a20}
\end{equation}

\subsection{Main result}\label{sec.a2a}
Let us fix a ``reference point" $\nu\in\R$ and for 
$\lambda>\nu$ denote 
\begin{equation}
\begin{split}
F_0(\lambda)&=GE_0[\nu,\lambda)\bigl(GE_0[\nu,\lambda)\bigr)^*,
\\
F(\lambda)&=GE[\nu,\lambda)\bigl(GE[\nu,\lambda)\bigr)^*.
\end{split}
\label{a5}
\end{equation}
Note that by \eqref{a3} and \eqref{a20}, the operators $GE_0[\nu,\lambda)$ and 
$GE[\nu,\lambda)$ 
are well defined and compact. 
For $\nu<\lambda_1<\lambda_2$, we have
\begin{equation}
F_0(\lambda_2)-F_0(\lambda_1)
=
GE_0[\lambda_1,\lambda_2)\bigl(GE_0[\lambda_1,\lambda_2)\bigr)^*
\label{a5a}
\end{equation}
and a similar identity holds true for $F(\lambda)$.
In what follows, we discuss the derivatives
\begin{equation}
F_0'(\lambda)=\frac{d}{d\lambda}F_0(\lambda), 
\quad
F'(\lambda)=\frac{d}{d\lambda}F(\lambda)
\label{a6}
\end{equation}
understood in the operator norm sense. 
By \eqref{a5a}, 
it is clear that neither the existence nor the values of these derivatives 
depend on the choice of the reference point $\nu$. 
In fact, if $H_0$ is semi-bounded from below, then we can take $\nu=-\infty$.
It is also clear that if these derivatives exist
in the operator norm, then $F_0'(\lambda)\geq0$ and $F'(\lambda)\geq0$ 
in the quadratic form sense. 
\begin{theorem}\label{th1}
Assume \eqref{a3} and suppose that for some $\lambda>\nu$, 
the derivatives $F_0'(\lambda)$, $F'(\lambda)$ exist in the operator norm. 
Then the limit
\begin{equation}
\alpha(\lambda)\overset{\rm def}{=}
\lim_{\varepsilon\to+0}\frac{\pi}{2\varepsilon}
\norm{(GE_0(\lambda-\varepsilon,\lambda+\varepsilon))^*J
GE(\lambda-\varepsilon,\lambda+\varepsilon)}
\label{a7a}
\end{equation}
exists and the identity 
\begin{equation}
\sigma_\ess\bigl(E(-\infty,\lambda)-E_0(-\infty,\lambda)\bigr)
=[-\alpha(\lambda),\alpha(\lambda)]
\label{a8}
\end{equation}
holds true. One also has 
\begin{equation}
\alpha(\lambda)=\pi\norm{F_0'(\lambda)^{1/2}JF'(\lambda)^{1/2}}.
\label{a7}
\end{equation}
\end{theorem}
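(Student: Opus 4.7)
The plan proceeds in three parts: (i) evaluation of the limit \eqref{a7a} and proof of the formula \eqref{a7}; (ii) identification of the essential norm, $\norm{D}_\ess=\alpha(\lambda)$, where $D:=E(-\infty,\lambda)-E_0(-\infty,\lambda)$; and (iii) upgrading this to the full interval identity \eqref{a8}.

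For part (i), set $A_\varepsilon=GE_0(\lambda-\varepsilon,\lambda+\varepsilon)$ and $B_\varepsilon=GE(\lambda-\varepsilon,\lambda+\varepsilon)$. By \eqref{a5a} and the differentiability hypothesis,
\[
A_\varepsilon A_\varepsilon^*=2\varepsilon F_0'(\lambda)+o(\varepsilon),\qquad B_\varepsilon B_\varepsilon^*=2\varepsilon F'(\lambda)+o(\varepsilon)
\]
in operator norm, so $\norm{A_\varepsilon},\norm{B_\varepsilon}=O(\varepsilon^{1/2})$. Applying the identity $\norm{M}^2=\norm{M^*M}$ first with $M=A_\varepsilon^*JB_\varepsilon$ and then with $M=F_0'(\lambda)^{1/2}JB_\varepsilon$, and tracking that each expansion contributes an error of order $o(\varepsilon)\cdot O(\varepsilon)=o(\varepsilon^2)$, yields
\[
\norm{A_\varepsilon^*JB_\varepsilon}^2=(2\varepsilon)^2\norm{F_0'(\lambda)^{1/2}JF'(\lambda)^{1/2}}^2+o(\varepsilon^2),
\]
which gives both the existence of the limit in \eqref{a7a} and the formula \eqref{a7}.

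For part (ii), the upper bound $\norm{D}_\ess\leq\alpha(\lambda)$ is proved by representing $D$, modulo a compact operator, as an operator spectrally concentrated in $(\lambda-\varepsilon,\lambda+\varepsilon)$ with norm at most $\tfrac{\pi}{2\varepsilon}\norm{A_\varepsilon^*JB_\varepsilon}+o(1)$. The representation comes from the resolvent identity \eqref{a4} combined with the compactness statements \eqref{a3}, \eqref{a20}, which allow one to peel off the ``far-from-$\lambda$'' contribution as a compact term. Letting $\varepsilon\to0$ and invoking part (i) gives the bound. The matching lower bound is obtained by constructing a Weyl singular sequence for $D$ at $\pm\alpha(\lambda)$ from unit vectors nearly attaining the norm of $F_0'(\lambda)^{1/2}JF'(\lambda)^{1/2}$, transported through approximate polar decompositions of $A_\varepsilon$ and $B_\varepsilon$.

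For part (iii), following the technique of \cite{Push}, one uses that $\sigma_\ess(D)\subset[-1,1]$ is symmetric about $0$ (a general property of the difference of two orthogonal projections, away from $\pm 1$). Given part (ii), it remains to exhibit, for each $\mu\in(0,\alpha(\lambda))$, a Weyl singular sequence for $D$ at $\mu$. Such a sequence can be built from singular vectors of $F_0'(\lambda)^{1/2}JF'(\lambda)^{1/2}$ near the value $\mu/\pi$ (or from more subtle combinations when the point spectrum of this operator is too sparse), lifted through $A_\varepsilon$ and $B_\varepsilon$ at progressively finer scales $\varepsilon\to 0$. I expect the main obstacle to be the localization argument in part (ii): showing that the essential spectrum of $D$ is determined solely by the infinitesimal data at $\lambda$ requires a careful resolvent analysis to verify that contributions from spectral intervals separated from $\lambda$ are compact.
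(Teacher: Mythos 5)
Your part (i) is essentially the paper's own computation (a chain of $\norm{X}^2=\norm{XX^*}=\norm{X^*X}$ identities together with $F_0(\lambda+\varepsilon)-F_0(\lambda-\varepsilon)=2\varepsilon F_0'(\lambda)+o(\varepsilon)$), and your observation that $\sigma_\ess(D)$ is symmetric about $0$ away from $\pm1$ is also used in the paper (via $D|_{\calH_0}\approx -D|_{\calH_0}$ and the block decomposition $D^2=E_0(\R_-)E(\R_+)E_0(\R_-)+E_0(\R_+)E(\R_-)E_0(\R_+)$). But parts (ii) and (iii) have a genuine gap: you assert, without any mechanism, that $D$ is congruent modulo compacts to an operator of norm $\tfrac{\pi}{2\varepsilon}\norm{A_\varepsilon^*JB_\varepsilon}+o(1)$, and that Weyl sequences at every $\mu\in(0,\alpha(\lambda))$ can be built from singular vectors of $F_0'(\lambda)^{1/2}JF'(\lambda)^{1/2}$. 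Neither claim is substantiated, and neither can be: the constant $\pi$ and the fact that the essential spectrum fills a whole \emph{interval} (rather than a discrete set determined by the singular values of the compact operator $F_0'(\lambda)^{1/2}JF'(\lambda)^{1/2}$) both come from a specific model operator that your outline never identifies. Localization alone (your ``peeling off'' step, which is the paper's Lemma~\ref{lma.b2}) only reduces matters to a product such as $E_0(0,1)E(-1,0)E_0(0,1)$ on a \emph{fixed} neighbourhood; it does not let you pass to shrinking intervals with the normalization $\pi/(2\varepsilon)$.

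The missing idea is the representation \eqref{b16}, $E(-1,0)E_0(0,1)=-LJL_0^*$, with $L_0f=\int_0^\infty e^{-tH_0}(GE_0(0,1))^*f(t)\,dt$ and $Lf=\int_0^\infty e^{tH}(GE(-1,0))^*f(t)\,dt$, proved by differentiating $(E_0(0,1)e^{-tH_0}f_0,E(-1,0)e^{tH}f)$ and using \eqref{a4a}. One then shows $L_0^*L_0$ and $L^*L$ equal, modulo compacts, $\Gamma\otimes F_0'(0)$ and $\Gamma\otimes F'(0)$, where $\Gamma$ is the Hankel integral operator with kernel $(1-e^{-t-s})/(t+s)$ and $\sigma(\Gamma)=[0,\pi]$. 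It is $\sigma(\Gamma)=[0,\pi]$ that simultaneously produces the factor $\pi$ in \eqref{a7} and fills out the interval $[0,\alpha(\lambda)^2]$ in the essential spectrum of $\Gamma^2\otimes Q$, $Q=F'(0)^{1/2}JF_0'(0)JF'(0)^{1/2}$; your ``progressively finer scales'' construction is an attempt to recreate this by hand, but as stated it gives no route to either the exact constant or the interval. Without this (or an equivalent) reduction to a concrete model operator, the proof of \eqref{a8} does not go through.
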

The proof is given in Section~\ref{sec.b}. 
\begin{remark*}
\begin{enumerate}[\rm 1.]
\item
It is straightforward to see that $\sigma(E(-\infty,\lambda)-E_0(-\infty,\lambda))\subset[-1,1]$. 
Thus, Theorem~\ref{th1} implies, in particular, that $\alpha(\lambda)\leq1$. 
\item
If $\lambda\notin\sigma(H_0)$, then $F_0'(\lambda)=0$, and we obtain
that the difference of the spectral projections in \eqref{a8} is compact. 
This is not difficult to prove directly (see Remark~\ref{rm.b1}). 
\item
If the operator $VR_0(i)$ is bounded, then it is obvious that \eqref{a7a} 
can be rewritten as \eqref{a1b}. 
\end{enumerate}
\end{remark*}

In what follows we prove that under the standard assumptions
of either trace class or smooth version of scattering theory, one has 
\begin{equation}
\alpha(\lambda)=\norm{S(\lambda)-I}/2,
\label{a9}
\end{equation}
where $S(\lambda)$ is the scattering matrix for the pair $H_0$, $H$. 
Thus, the verification of \eqref{a1} splits into two parts: \eqref{a8} and \eqref{a9}.
The statement \eqref{a8} is more general than \eqref{a9}. 
Indeed, in order to prove \eqref{a9},  one has to ensure that 
the scattering matrix $S(\lambda)$ is well defined;  this requires some
assumptions  stronger than those of Theorem~\ref{th1}, see
Sections~\ref{sec.a3} and \ref{sec.a4}.

\subsection{The scattering matrix}\label{sec.a3}
Here, following \cite{Yafaev}, we recall the definition of the scattering 
matrix in abstract scattering theory. 
This requires some rather lengthy preliminaries.
First we need to recall the definition of the core of the
absolutely continuous (a.c.) spectrum of $H_0$. 
Let $E_0^{(\ac)}(\cdot)$ (resp. $E^{(\ac)}(\cdot)$)
be the a.c. part of the spectral measure of $H_0$ (resp. $H$)
and let $\sigma_\ac(H_0)$ be the a.c. spectrum of $H_0$ defined 
as usual as the minimal \emph{closed} set such that 
$E_0^{(\ac)}(\R\setminus\sigma_\ac(H_0))=0$. 

The set $\sigma_\ac(H_0)$ is ``too large'' for 
general scattering theory considerations. 
Indeed, it is not difficult to construct examples when $\sigma_\ac(H_0)$ 
contains a closed set $A$ of a \emph{positive Lebesgue measure} 
such that $E_0^{(\ac)}(A)=0$ (consider $E_0^{(\ac)}$ being supported 
on the intervals $(a_n-2^{-n}, a_n+2^{-n})$, 
where $a_1,a_2,\dots$ is a dense sequence in $\R$). 
Thus, it is convenient to use the notion of the \emph{core of the a.c. spectrum
of $H_0$}, denoted by $\hat \sigma_\ac(H_0)$ and defined as a Borel set
such that:

(i) $\hat \sigma_\ac(H_0)$ is a Borel support of $E_0^{(\ac)}$, i.e. 
$E_0^{(\ac)}(\R\setminus\hat \sigma_\ac(H_0))=0$; 

(ii) if $A$ is any other Borel support of $E_0^{(\ac)}$, then 
the set $\hat \sigma_\ac(H_0)\setminus A$ has a zero Lebesgue measure. 

The set $\hat \sigma_\ac(H_0)$ is not unique but 
is defined up to a set of a zero Lebesgue measure. 

Suppose that for some interval $\Delta\subset \R$, the (local)
wave operators 
$$
W_\pm=W_\pm(H_0,H;\Delta)
=
\slim_{t\to\pm\infty}
e^{itH}e^{-itH_0}E_0^{(\ac)}(\Delta)
$$
exist and $\Ran W_+(H_0,H;\Delta)=\Ran W_-(H_0,H;\Delta)$. 
Then the (local) scattering operator $\mathbf S=W_+^*W_-$ 
is unitary in $\Ran E_0^{(\ac)}(\Delta)$ and commutes
with $H_0$. Consider the direct integral decomposition
\begin{equation}
\Ran E_0^{(\ac)}(\Delta)=
\int_{\hat\sigma_{\ac}(H_0)\cap \Delta}^\oplus \mathfrak h(\lambda)d\lambda
\label{a10}
\end{equation}
which diagonalises $H_0$. Since $\mathbf S$ commutes with $H_0$, 
the decomposition \eqref{a10} represents $\mathbf S$ as the operator
of multiplication by the operator valued function 
$S(\lambda):\mathfrak h(\lambda)\to\mathfrak h(\lambda)$. 
The unitary operator $S(\lambda)$ is called the scattering matrix. 
With this definition, $S(\lambda)$ is defined for a.e. 
$\lambda\in\hat\sigma_\ac(H_0)$. In abstract scattering theory, it does not make
sense to speak of $S(\lambda)$ at an individual point 
$\lambda\in\hat\sigma_\ac(H_0)$, since even the set 
$\hat\sigma_\ac(H_0)$  is defined only up to addition or
subtraction of sets of zero Lebesgue measure. 
Also, in general there is no 
distinguished choice of the direct integral decomposition
\eqref{a10}; any unitary transformation in the fiber spaces $\mathfrak h(\lambda)$ 
yields another suitable decomposition. Thus, the scattering 
matrix is, in general, defined only up to a unitary equivalence. 

The above discussion refers only to the ``abstract'' version 
of the mathematical scattering theory. 
In concrete problems, there is often a natural distinguished 
choice of the core $\hat\sigma_\ac(H_0)$ and of the
direct integral decomposition \eqref{a10}. 
This usually allows one to consider $S(\lambda)$ as 
an operator defined
for \emph{all} (rather than for a.e.) $\lambda\in\hat\sigma_\ac(H_0)$. 

In what follows we set
\begin{equation}
S(\lambda)=I \quad \text{for $\lambda\in\R\setminus\hat\sigma_\ac(H_0)$;}
\label{a23}
\end{equation}
thus, $S(\lambda)$ is now defined for a.e. $\lambda\in\R$. 
This will make the statements below more succinct.

\subsection{The scattering matrix and $\alpha(\lambda)$}\label{sec.a4}
Similarly to the definition \eqref{a19} of $T_0(z)$, let us formally define 
$T(z)=GR(z)G^*$. More precisely, using \eqref{a4b}, we set
$$
T(z)=G(\abs{H_0}+I)^{-1/2} \mathcal B(z) (G(\abs{H_0}+I)^{-1/2})^*.
$$
By \eqref{a3}, the operator $T(z)$ is compact.  
From the resolvent identity \eqref{a4} it follows that 
\begin{equation}
T(z)=T_0(z)-T_0(z)J(I+T_0(z)J)^{-1}T_0(z)
=(I+T_0(z)J)^{-1}T_0(z).
\label{a21}
\end{equation} 
First let us consider the framework of smooth perturbations. 
Suppose that for some bounded open interval
$\Delta\subset\R$, 
\begin{equation}
\begin{split}
&\text{$T_0(z)$ and $T(z)$ are uniformly continuous in the operator norm}
\\
&\text{in the rectangle $\Re z\in\Delta$,\   \   $\Im z\in(0,1)$.}
\end{split}
\label{a11}
\end{equation}
Of course, from here it trivially follows that the limits
$T_0(\lambda+i0)$, $T(\lambda+i0)$ exist in the operator norm 
for all $\lambda\in\Delta$. 
Under the assumption \eqref{a11} the operator $G$ is locally $H_0$-smooth and $H$-smooth on 
$\Delta$, and therefore the local wave operators $W_\pm(H_0,H;\Delta)$ 
exist and are complete (see e.g. \cite{Yafaev} for the details). 
The scattering matrix $S(\lambda)$ is defined for a.e. 
$\lambda\in\hat \sigma_\ac(H_0)\cap\Delta$. 
\begin{theorem}\label{cr.a2}
Assume \eqref{a3} and \eqref{a11}. 
Then for all $\lambda\in\Delta$, the derivatives $F_0'(\lambda)$ and $F'(\lambda)$ 
exist in the operator norm and so \eqref{a8} holds true. 
For a.e. $\lambda\in\Delta$, the identities \eqref{a9} and \eqref{a1}
hold true. 
\end{theorem}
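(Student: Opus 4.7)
The plan is to verify the hypotheses of Theorem~\ref{th1} at every $\lambda\in\Delta$, apply it to obtain \eqref{a8}, and then identify $\alpha(\lambda)$ with $\tfrac12\|S(\lambda)-I\|$ by means of the stationary representation of the scattering matrix.

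For the existence of $F_0'(\lambda)$ and $F'(\lambda)$ on $\Delta$, I would use Stieltjes inversion. By \eqref{a3} and \eqref{a20}, the sandwiched spectral measure $d\mu_0 = d(GE_0(\cdot)G^*)$ is a well-defined finite operator-valued Borel measure on bounded intervals, and $T_0(z)$ is its Cauchy transform. The Poisson representation
\begin{equation*}
\tfrac{1}{\pi}\Im T_0(\lambda+i\varepsilon) = \int \frac{\varepsilon/\pi}{(\mu-\lambda)^2+\varepsilon^2}\,d\mu_0(\mu),
\end{equation*}
together with the norm continuity of $\lambda\mapsto T_0(\lambda+i0)$ on $\Delta$ guaranteed by \eqref{a11}, yields via a standard bump-function argument (the Poisson kernel is an approximate identity)
\begin{equation*}
GE_0[\lambda_1,\lambda_2) G^* = \tfrac{1}{\pi}\int_{\lambda_1}^{\lambda_2}\Im T_0(\lambda+i0)\,d\lambda, \qquad [\lambda_1,\lambda_2)\subset\Delta.
\end{equation*}
Since the integrand is norm continuous on $\Delta$, the fundamental theorem of calculus in operator norm gives $F_0'(\lambda) = \tfrac{1}{\pi}\Im T_0(\lambda+i0)$ for every $\lambda\in\Delta$; the same argument applied to $T(z)$ yields $F'(\lambda) = \tfrac{1}{\pi}\Im T(\lambda+i0)$. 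Theorem~\ref{th1} then applies pointwise on $\Delta$ and produces both \eqref{a8} and the formula $\alpha(\lambda) = \pi\|F_0'(\lambda)^{1/2}JF'(\lambda)^{1/2}\|$.

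For \eqref{a9}, I would first extract from the resolvent identity \eqref{a21} the relation
\begin{equation*}
F'(\lambda) = A(\lambda) F_0'(\lambda) A(\lambda)^*, \qquad A(\lambda) := (I + T_0(\lambda+i0)J)^{-1}.
\end{equation*}
Under \eqref{a11}, $G$ is locally $H_0$- and $H$-smooth on $\Delta$, the local wave operators $W_\pm(H_0,H;\Delta)$ exist and are complete, and the scattering matrix admits the stationary representation (see \cite{Yafaev})
\begin{equation*}
S(\lambda) - I_{\h(\lambda)} = -2\pi i\,Z_0(\lambda) J A(\lambda) Z_0(\lambda)^*
\end{equation*}
for a.e.\ $\lambda\in\Delta$, where $Z_0(\lambda):\calK\to\h(\lambda)$ is a partial isometry with $Z_0(\lambda)^* Z_0(\lambda) = F_0'(\lambda)$. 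Writing $Z_0(\lambda) = V_0\,F_0'(\lambda)^{1/2}$ with $V_0$ isometric on $\overline{\Ran F_0'(\lambda)}$ gives $\|S(\lambda) - I\| = 2\pi\|F_0'(\lambda)^{1/2} J A(\lambda) F_0'(\lambda)^{1/2}\|$; applying the $C^*$-identity $\|X\|^2 = \|XX^*\|$ together with $F'(\lambda) = A(\lambda)F_0'(\lambda)A(\lambda)^*$ collapses this to $2\pi\|F_0'(\lambda)^{1/2} J F'(\lambda)^{1/2}\| = 2\alpha(\lambda)$. Substituting \eqref{a9} into \eqref{a8} yields \eqref{a1}.

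The main obstacle is the third step: one has to invoke the stationary formula for $S(\lambda)$ and check that the dressing factor $A(\lambda)$ is absorbed without loss of norm into the passage from $F_0'(\lambda)$ to $F'(\lambda)^{1/2}$, using the factorisation relation between the two boundary densities. The first two steps are essentially routine Stieltjes inversion combined with an application of Theorem~\ref{th1}.
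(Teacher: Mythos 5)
Your overall strategy coincides with the paper's: Stone's formula plus the norm continuity of $\Im T_0(\lambda+i0)$, $\Im T(\lambda+i0)$ to get $F_0'(\lambda)=\frac1\pi\Im T_0(\lambda+i0)$ and $F'(\lambda)=\frac1\pi\Im T(\lambda+i0)$ on all of $\Delta$ (this is Lemma~\ref{prp.c1}), then Theorem~\ref{th1}, then the stationary representation \eqref{c5}--\eqref{c6} of $S(\lambda)$ combined with the polar decomposition of $Z(\lambda)$ and the $C^*$-identity. Your identification of the norm is also sound: the factorisation $B(\lambda)=A(\lambda)B_0(\lambda)A(\lambda)^*$ with $A(\lambda)=(I+T_0(\lambda+i0)J)^{-1}=I-T(\lambda+i0)J$ does follow from \eqref{a21}, and $\norm{F_0'^{1/2}JA F_0'^{1/2}}^2=\norm{F_0'^{1/2}JAF_0'A^*JF_0'^{1/2}}=\norm{F_0'^{1/2}JF'JF_0'^{1/2}}=\norm{F_0'^{1/2}JF'^{1/2}}^2$ gives $\frac12\norm{S(\lambda)-I}=\alpha(\lambda)$. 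The paper packages this same computation through the auxiliary unitary operator $\wt S(\lambda)$ of \eqref{c3} (Lemmas~\ref{lma.c8} and \ref{prp.c3}); your version bypasses $\wt S(\lambda)$ and its unitarity, which is a legitimate, slightly more direct route to the same identity. (Minor slip: $Z_0(\lambda)$ is not itself a partial isometry, but you only use its polar decomposition, so nothing is lost.)

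There is, however, one genuine gap. The stationary representation of $S(\lambda)$ is available only for a.e.\ $\lambda\in\hat\sigma_\ac(H_0)\cap\Delta$, so your argument proves \eqref{a9} only on that set. The theorem asserts \eqref{a9} and \eqref{a1} for a.e.\ $\lambda\in\Delta$, and by the convention \eqref{a23} this includes the set $\Delta\setminus\hat\sigma_\ac(H_0)$, which may well have positive Lebesgue measure (hypothesis \eqref{a11} does not force the spectrum of $H_0$ in $\Delta$ to be purely absolutely continuous --- consider, e.g., $G$ with a large kernel). On that set $S(\lambda)=I$ by definition, so one must separately prove that $\alpha(\lambda)=0$ for a.e.\ $\lambda\in\Delta\setminus\hat\sigma_\ac(H_0)$. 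This is the content of Lemma~\ref{lma.c5}: the sandwiched spectral measure $GE_0(\cdot)G^*$ is supported (up to a null set for $E_0$) on a Borel support of $E_0$ whose intersection with $\R\setminus\hat\sigma_\ac(H_0)$ has zero Lebesgue measure, whence $F_0'(\lambda)=0$ a.e.\ there by the a.e.\ differentiation theorem for singular measures. Your proposal needs this additional measure-theoretic step to cover the full claim.
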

The proof is given in Section~\ref{sec.c2}. 
In \cite{Push}, formula \eqref{a1} was proven under the additional 
assumptions of the compactness of $G$ (which is a stronger assumption
than \eqref{a3})
and the H\"older continuity of 
$F_0'(\lambda)$ and $F'(\lambda)$. 

Next, consider the trace class scheme. 
Let $\mathbf S_2$ be the Hilbert-Schmidt class. 
Suppose that $H=H_0+V$, where $V=V^*$ is a trace
class operator. Then we can factorise $V=GJG^*$
with $G=\abs{V}^{1/2}\in\mathbf S_2$ and $J=\sign(V)$. 
It is well known that under these assumptions, 
the derivatives $F_0'(\lambda)$ and $F'(\lambda)$ 
exist in the operator norm 
for a.e. $\lambda\in\R$ (see e.g. \cite[Section~6.1]{Yafaev}).
We have
\begin{theorem}\label{cr.a3}
Let $H=H_0+V$, where $V$ is a trace class operator. 
Set $G=\abs{V}^{1/2}$. 
Then for a.e. $\lambda\in\R$, the derivatives 
$F_0'(\lambda)$, $F'(\lambda)$ exist and 
\eqref{a8}, \eqref{a9} and \eqref{a1} hold true. 
\end{theorem}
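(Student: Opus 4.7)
The plan is to reduce Theorem~\ref{cr.a3} to Theorem~\ref{th1} together with a stationary computation of the scattering matrix in the trace class framework. First, with the canonical factorisation $G=\abs{V}^{1/2}\in\mathbf{S}_2$, $J=\sign(V)$, one has $V=G^*JG$ and $G$ is itself compact, so $G(\abs{H_0}+I)^{-1/2}$ is compact and the abstract hypothesis \eqref{a3} is satisfied. The a.e.\ existence of $F_0'(\lambda)$ and $F'(\lambda)$ in the operator norm is the classical trace class fact cited just before the statement: since $G\in\mathbf{S}_2$, the functions $F_0,F$ are monotone with values in $\mathbf{S}_1$, and monotone $\mathbf{S}_1$-valued functions are operator-norm differentiable a.e.\ by a theorem of Birman. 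Applying Theorem~\ref{th1} at each such $\lambda$ yields \eqref{a8}.

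Next, the Kato-Rosenblum theorem gives existence and completeness of the wave operators $W_\pm(H_0,H)$, so the scattering matrix $S(\lambda)$ is defined for a.e.\ $\lambda\in\hat\sigma_\ac(H_0)$ (and equals $I$ elsewhere by \eqref{a23}). It remains to establish
\begin{equation*}
\norm{S(\lambda)-I}=2\pi\norm{F_0'(\lambda)^{1/2}JF'(\lambda)^{1/2}}
\end{equation*}
for a.e.\ $\lambda$: combined with \eqref{a7}, this is \eqref{a9}, and then \eqref{a1} is immediate from \eqref{a8}. The standard stationary formula of trace class scattering theory expresses $S(\lambda)-I$, fibered over $\h(\lambda)$, as $-2\pi i\,Z_0(\lambda)\,J(I+T_0(\lambda+i0)J)^{-1}\,Z_0(\lambda)^*$ with a measurable ``evaluation'' $Z_0(\lambda):\calK\to\h(\lambda)$ satisfying $Z_0(\lambda)^*Z_0(\lambda)=F_0'(\lambda)/\pi$. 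Using the resolvent identity \eqref{a21}, one absorbs one factor $T_0(\lambda+i0)$ into $T(\lambda+i0)$, thereby converting one occurrence of $F_0'(\lambda)^{1/2}$ into $F'(\lambda)^{1/2}$, and a polar-decomposition argument then identifies the operator norm of the fiber $S(\lambda)-I$ with $2\pi\norm{F_0'(\lambda)^{1/2}JF'(\lambda)^{1/2}}$.

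The main obstacle lies in this last step: outside the smooth framework of Theorem~\ref{cr.a2} the sandwiched resolvents $T_0(\lambda+i0)$ and $T(\lambda+i0)$ need not admit operator-norm boundary values, so the stationary formula is only available a.e.\ and in a weaker topology, and the polar-decomposition argument has to be executed fibrewise. The cleanest route to avoid the delicate limits is probably approximation: pick trace-norm approximants $V_n\to V$ satisfying \eqref{a11} on large intervals, apply Theorem~\ref{cr.a2} to each pair $(H_0,H_0+V_n)$ to obtain the desired identity pointwise, and then pass to the limit via trace-norm continuity of the corresponding $F_{0}'(\lambda),F_n'(\lambda)$ together with Birman-Krein continuity of the scattering matrix in the trace norm of the perturbation, refining to a subsequence that converges a.e.\ in $\lambda$.
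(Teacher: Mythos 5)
Your first half is sound and matches the paper: hypothesis \eqref{a3} holds for $G=\abs{V}^{1/2}\in\mathbf S_2$, the a.e.\ operator-norm differentiability of the monotone $\mathbf S_1$-valued functions $F_0,F$ is the classical Birman--Entina fact, and Theorem~\ref{th1} then gives \eqref{a8}. The problem is in your treatment of \eqref{a9}. The ``main obstacle'' you identify --- that outside the smooth framework $T_0(\lambda+i0)$ and $T(\lambda+i0)$ need not admit operator-norm boundary values --- is not actually an obstacle: it is a second classical fact of trace class theory (used as Lemma~\ref{prp.c2}(i) in the paper, see \cite[Section~6.1]{Yafaev}) that for $G\in\mathbf S_2$ these limits \emph{do} exist in the operator norm for a.e.\ $\lambda\in\R$. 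Once you have that, the stationary representation \eqref{c5}--\eqref{c6} and the polar-decomposition argument you sketch in your second paragraph go through verbatim, exactly as in the smooth case; this is the paper's route. Two smaller points there: the correct normalisation is $\pi Z(\lambda)^*Z(\lambda)=B_0(\lambda)=\pi F_0'(\lambda)$, i.e.\ $Z(\lambda)^*Z(\lambda)=F_0'(\lambda)$, not $F_0'(\lambda)/\pi$ (your constant would throw the final identity off by a factor of $\pi$); and you must still handle a.e.\ $\lambda\in\R\setminus\hat\sigma_\ac(H_0)$, where \eqref{a9} with the convention \eqref{a23} requires showing $\alpha(\lambda)=0$ --- the paper does this via Lemma~\ref{lma.c5} ($F_0'(\lambda)=0$ a.e.\ off a Borel support of $E_0$), and your proposal omits it.

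Your fallback approximation argument does not close these gaps and introduces new ones. There is no reason a trace-norm approximant $V_n$ of a general trace class $V$ should satisfy the smoothness hypothesis \eqref{a11}: that condition constrains $T_0(z)=G_nR_0(z)G_n^*$ near the real axis and fails for generic $H_0$ (e.g.\ with dense point spectrum in $\Delta$) no matter how $V_n$ is chosen. Moreover, the passage to the limit is unjustified at the two places where it matters: pointwise a.e.\ convergence of $F_n'(\lambda)$ to $F'(\lambda)$ (the derivative of the spectral measure of $H_0+V_n$ is highly unstable under perturbation of $V_n$), and pointwise a.e.\ convergence of $S_n(\lambda)$. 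Known continuity results for the scattering matrix in the perturbation are integrated in $\lambda$, and extracting a subsequence that simultaneously controls $\alpha_n(\lambda)\to\alpha(\lambda)$ and $S_n(\lambda)\to S(\lambda)$ a.e.\ would require substantial extra work. The direct route --- a.e.\ existence of $T_0(\lambda+i0)$, $T(\lambda+i0)$ in operator norm, then Lemmas~\ref{lma.c8}, \ref{prp.c3} and \ref{lma.c5} --- is both the paper's argument and the simpler one.
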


Alternatively, we have the following statement 
more suitable for applications to differential operators:

\begin{theorem}\label{cr.a4}
Let $H_0$ be semi-bounded from below; assume that 
\eqref{a3} holds true and also, for some $m>0$, 
\begin{equation}
G(\abs{H_0}+I)^{-m}\in \mathbf S_2.
\label{a13}
\end{equation}
Then the conclusion of Theorem~\ref{cr.a3} holds true. 
\end{theorem}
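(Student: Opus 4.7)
The idea is to reduce to Theorem~\ref{cr.a3} via the classical invariance principle of scattering theory. Since $H_0$ is semi-bounded from below and \eqref{a3} makes $V$ $H_0$-form compact, the KLMN theorem (applied through \eqref{a4a}) gives that $H$ is semi-bounded from below as well. Pick $c>0$ large enough that $H_0+c\ge I$ and $H+c\ge I$. For a positive integer $n$ to be specified, set $\phi(x):=-(x+c)^{-n}$, a smooth strictly increasing diffeomorphism of $(-c,\infty)$ onto $(-\infty,0)$, and put $\tilde H_0:=\phi(H_0)$, $\tilde H:=\phi(H)$.

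The main technical work, and the principal obstacle, is to establish for $n$ large enough two Hilbert-Schmidt/trace-class facts: $G(H+c)^{-n}\in\mathbf{S}_2$ and $\tilde V:=\tilde H-\tilde H_0=(H_0+c)^{-n}-(H+c)^{-n}\in\mathbf{S}_1$. Both follow from the hypothesis \eqref{a13} (which gives $G(H_0+c)^{-k}\in\mathbf{S}_2$ for every integer $k\ge m$) combined with iterates of the resolvent identity \eqref{a4}. An immediate consequence of \eqref{a4} is the formula $GR(z)=(I+T_0(z)J)^{-1}GR_0(z)$, from which $G(H+c)^{-1}\in\mathbf{S}_2$ when $m\le 1$; for larger $m$, a bootstrap on the power of the resolvent using Schatten-ideal properties of sandwiched resolvents is required. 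Once $G(H+c)^{-n}\in\mathbf{S}_2$ is in hand, the telescoping of $(H_0+c)^{-n}-(H+c)^{-n}$ into a finite sum of terms of the form $(G(H_0+c)^{-i})^{*}\cdot J(I+T_0(-c)J)^{-1}\cdot G(H+c)^{-j}$ expresses it as a sum of products of two $\mathbf{S}_2$ operators with a bounded operator in between, giving $\tilde V\in\mathbf{S}_1$. This step is in the spirit of the abstract trace-class scheme of \cite[Chap.~6]{Yafaev}.

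With these ingredients, the conclusion is assembled in three moves. First, Theorem~\ref{cr.a3} applied to $(\tilde H_0,\tilde H)$ with trace-class perturbation $\tilde V$ yields, for a.e. $\mu\in\R$,
\begin{equation*}
\sigma_\ess\bigl(\tilde E(-\infty,\mu)-\tilde E_0(-\infty,\mu)\bigr)=\bigl[-\tfrac{1}{2}\norm{\tilde S(\mu)-I},\tfrac{1}{2}\norm{\tilde S(\mu)-I}\bigr],
\end{equation*}
where $\tilde E_0,\tilde E$ are the spectral projections of $\tilde H_0,\tilde H$ and $\tilde S$ the corresponding scattering matrix. Strict monotonicity of $\phi$ identifies $\tilde E_0(-\infty,\phi(\lambda))=E_0(-\infty,\lambda)$ and similarly for $\tilde E$, while the classical invariance principle yields $W_\pm(\tilde H_0,\tilde H)=W_\pm(H_0,H)$; matching the two direct integrals \eqref{a10} via the change of variable $\mu=\phi(\lambda)$ identifies the scattering matrices in the fibres so that $\norm{\tilde S(\phi(\lambda))-I}=\norm{S(\lambda)-I}$, establishing \eqref{a1}. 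Second, the Hilbert-Schmidt bounds on $G(H_0+c)^{-n}$ and $G(H+c)^{-n}$ imply $GE_0(I),GE(I)\in\mathbf{S}_2$ for every bounded Borel $I\subset\R$ (since $(H_0+c)^n E_0(I)$ and $(H+c)^n E(I)$ are bounded by bounded functional calculus), and the standard differentiation lemma for monotone $\mathbf{S}_1$-valued functions (see \cite[Sect.~6.1]{Yafaev}) gives a.e. existence of $F_0'(\lambda),F'(\lambda)$ in operator norm. Third, Theorem~\ref{th1} then yields \eqref{a8}, and comparison with \eqref{a1} gives \eqref{a9}, completing the proof.
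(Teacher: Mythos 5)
Your reduction to Theorem~\ref{cr.a3} via the invariance principle hinges entirely on the claim that $\tilde V=(H_0+c)^{-n}-(H+c)^{-n}\in\mathbf S_1$ for large $n$, and this is precisely where the argument breaks down. Writing $A=(H_0+c)^{-1}$, $B=(H+c)^{-1}$ and using $A-B=(GA)^*J\,GB$ (which follows from \eqref{a4} and $GB=(I+T_0(-c)J)^{-1}GA$), the telescoping gives
$$
A^n-B^n=\sum_{i=0}^{n-1}A^i(A-B)B^{n-1-i}=\sum_{i=0}^{n-1}\bigl(GA^{i+1}\bigr)^*J\,\bigl(GB^{n-i}\bigr),
$$
and the exponents in the $i$-th term are $(i+1,\,n-i)$. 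No matter how large you take $n$, the sum always contains the end terms $i=0$ and $i=n-1$, which involve $GA^{1}$ and $GB^{1}$ respectively. Under \eqref{a3} and \eqref{a13} with $m>1$ these factors are merely compact, not Hilbert--Schmidt, so those terms are only products of a compact operator with a Hilbert--Schmidt one --- they land in $\mathbf S_2$, not $\mathbf S_1$. Your phrase ``a sum of products of two $\mathbf S_2$ operators'' is therefore unjustified, and no choice of $n$ repairs it; this is exactly why the trace-class theory under hypotheses like \eqref{a13} is developed \emph{locally} (via $GE_0(\Lambda),GE(\Lambda)\in\mathbf S_2$ for bounded $\Lambda$, whence $E(\Lambda)VE_0(\Lambda)\in\mathbf S_1$) rather than through a global trace-class difference of functions of $H$ and $H_0$. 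Since move one fails, your derivation of \eqref{a1}, and hence of \eqref{a9} by ``comparison,'' collapses.

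For contrast, the paper never invokes the invariance principle. It localises: with $G_1=GE_0(-R,R)\in\mathbf S_2$ it applies the Hilbert--Schmidt theory on $(-R,R)$ to get a.e.\ existence of $F_0'(\lambda)$, $F'(\lambda)$ and of the boundary values $T_0(\lambda+i0)$, $T(\lambda+i0)$ (Lemma~\ref{prp.c2}(ii), after establishing \eqref{c2} as in the proof of \cite[Theorem~XI.30]{RS3}); Theorem~\ref{th1} then gives \eqref{a8} directly for the original pair, and \eqref{a9} is obtained not by comparison of two spectral formulas but through the unitary operator $\wt S(\lambda)$ of Lemma~\ref{lma.c8} and the stationary representation of the scattering matrix (Lemma~\ref{prp.c3}), with Lemma~\ref{lma.c5} handling a.e.\ $\lambda$ outside the core of the a.c.\ spectrum. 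Your second and third ``moves'' are essentially this route and are sound (modulo the bootstrap for $G(\abs{H}+I)^{-m}\in\mathbf S_2$, which is the same step the paper delegates to \cite[Theorem~XI.30]{RS3}); if you drop the first move and instead identify $\alpha(\lambda)$ with $\tfrac12\norm{S(\lambda)-I}$ via the stationary representation, you recover a correct proof.
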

The proofs of Theorems~\ref{cr.a3} and \ref{cr.a4} are given in Section~\ref{sec.c2}. 

\begin{remark*}
\begin{enumerate}[1.]
\item
The existence and completeness of the 
wave operators under the assumptions of Theorems~\ref{cr.a3} and \ref{cr.a4}
is well known; 
see e.g. \cite[Section~4.5 and Section~6.4]{Yafaev}.
\item
According to our convention \eqref{a23}, we have
$$
\norm{S(\lambda)-I}=0 \quad \text{for $\lambda\in\R\setminus\hat\sigma_\ac(H_0)$.}
$$
Thus, 
Theorems~\ref{cr.a2}--\ref{cr.a4} in particular, include the statement that
for a.e. $\lambda\in\R\setminus\hat\sigma_\ac(H_0)$,
the difference of the spectral projections \eqref{a2} is compact. 
\end{enumerate}
\end{remark*}

\subsection{Piecewise continuous functions $\varphi$ }\label{sec.a6}

Let us consider the spectrum of $\varphi(H)-\varphi(H_0)$
for piecewise continuous functions $\varphi$. 
It is natural to consider complex-valued functions $\varphi$; 
in this case $\varphi(H)-\varphi(H_0)$ is non-selfadjoint. 
For a bounded  operator $M$, we denote by 
$\sigma_\ess(M)$ the compact set of all $z\in\C$ such that
the operator $M-z I$ is not Fredholm. 
The reader should be warned that there
are several non-equivalent definitions of the essential spectrum
of a non-selfadjoint operator
in the literature; see e.g. \cite[Sections~1.4 and 9.1]{EE}
for a comprehensive discussion. However, 
as we shall see, the essential 
spectrum of $\varphi(H)-\varphi(H_0)$ has an empty interior and  
a connected complement in $\C$, and so 
in our case most of these definitions coincide.

A function $\varphi:\R\to\C$
is called piecewise continuous if for any $\lambda\in\R$ the limits
$\varphi(\lambda\pm0)=\lim_{\varepsilon\to+0}\varphi(\lambda\pm\varepsilon)$ exist. 
We denote by  $PC_0(\R)$ (resp. $C_0(\R)$) the set of all 
piecewise continuous (resp. continuous) 
functions $\varphi:\R\to\C$ such that $\lim_{\abs{x}\to\infty}\varphi(x)=0$. 
For $\varphi\in PC_0(\R)$ we denote 
$$
\varkappa_\lambda(\varphi)=\varphi(\lambda+0)-\varphi(\lambda-0),
\quad
\singsupp \varphi=
\overline{\{\lambda\in\R\mid \varkappa_\lambda(\varphi)\not=0\} }.
$$
It is easy to see that for any $\varepsilon>0$, the set 
$\{\lambda\in\R\mid \abs{\varkappa_\lambda(\varphi)}>\varepsilon\}$
is finite. 
For $z_1,z_2\in\C$, we denote by $[z_1,z_2]$ the closed 
interval of the straight line in $\C$ 
that joins  $z_1$ and $z_2$.

\begin{theorem}\label{th10}
Assume \eqref{a3} and let \eqref{a11} hold true for some open bounded interval 
$\Delta\subset\R$. Let $\varphi\in PC_0(\R)$ be a  
function with $\singsupp \varphi\subset\Delta$. Then
we have
\begin{equation}
\sigma_\ess(\varphi(H)-\varphi(H_0))=
\cup_{\lambda\in\Delta}
[-\alpha(\lambda)\varkappa_\lambda(\varphi),
\alpha(\lambda)\varkappa_\lambda(\varphi)],
\label{a16}
\end{equation}
where $\alpha(\lambda)$ is defined by \eqref{a7a}.
In particular, if $\varphi$ is real valued, then 
$$
\sigma_\ess(\varphi(H)-\varphi(H_0))=[-a,a],
\quad
a=\sup_{\lambda\in\Delta}\abs{\alpha(\lambda)\varkappa_\lambda(\varphi)}.
$$
\end{theorem}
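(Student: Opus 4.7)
Enumerate the jumps $\{\lambda_n\}\subset\Delta$ of $\varphi$ and set $c_n := \varkappa_{\lambda_n}(\varphi)$. Since $\varphi\in PC_0(\R)$, one has $c_n\to 0$ and $\sum_n c_n = 0$ (both limits of $\varphi$ at $\pm\infty$ vanish). With $\theta_\lambda:=\mathbf{1}_{(-\infty,\lambda)}$, write
\[
\varphi = \varphi_c - \sum_n c_n\theta_{\lambda_n};
\]
the continuous remainder $\varphi_c$ lies in $C_0(\R)$ thanks to $\sum c_n = 0$. From \eqref{a4} together with \eqref{a3} and \eqref{a20}, $R(z)-R_0(z)$ is compact, so Stone--Weierstrass gives $f(H)-f(H_0)\in\mathcal{K}(\calH)$ for every $f\in C_0(\R)$. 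Hence, modulo compact operators,
\[
\varphi(H)-\varphi(H_0)\equiv -\sum_n c_n D_n, \qquad D_n := E(-\infty,\lambda_n) - E_0(-\infty,\lambda_n),
\]
and the problem reduces to computing $\sigma_\ess\bigl(\sum_n c_n D_n\bigr)$.

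\textbf{Power-style localization.}
First treat finitely many jumps $\lambda_1<\cdots<\lambda_N$ in $\Delta$. Pick continuous cutoffs $\chi_n\in C_0(\R)$ with $\chi_n(\lambda_n)=1$ and pairwise disjoint supports $U_n\subset\Delta$. Splitting
\[
\sum_n c_n\theta_{\lambda_n} = \sum_n c_n\chi_n\theta_{\lambda_n} + \sum_n c_n(1-\chi_n)\theta_{\lambda_n},
\]
the second summand is continuous at each $\lambda_n$ (because $\chi_n(\lambda_n)=1$) and vanishes at both $\pm\infty$ (using $\sum c_n=0$), so it belongs to $C_0(\R)$ and contributes a compact operator. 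Thus $\sum_n c_n D_n\equiv \sum_n T_n$ mod compacts with $T_n := \psi_n(H)-\psi_n(H_0)$, $\psi_n := c_n\chi_n\theta_{\lambda_n}$ compactly supported in $U_n$. The key technical step is the essential orthogonality $T_nT_m\in\mathcal{K}(\calH)$ for $n\neq m$ (and similarly for $T_mT_n$, $T_n^*T_m$, $T_nT_m^*$). This reduces, by factoring $\psi_n(H)=E(U_n)\psi_n(H)$ and $\psi_m(H_0)=\psi_m(H_0)E_0(U_m)$, to the compactness of $E(U)E_0(V)$ for disjoint open intervals $U,V\subset\R$. For the latter, take continuous cutoffs $\tilde\chi_U,\tilde\chi_V\in C_0(\R)$ with disjoint supports, $\tilde\chi_U\equiv 1$ on $U$ and $\tilde\chi_V\equiv 1$ on $V$: then $E(U)E_0(V)=E(U)\tilde\chi_U(H)\tilde\chi_V(H_0)E_0(V)$, and $\tilde\chi_U(H)\tilde\chi_V(H_0)\equiv \tilde\chi_U(H_0)\tilde\chi_V(H_0)=0$ modulo compacts (using $\tilde\chi_U(H)\equiv\tilde\chi_U(H_0)$ mod compacts and $\tilde\chi_U\tilde\chi_V=0$).

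\textbf{Per-block spectrum, general case, and main obstacle.}
With the $T_n$ pairwise essentially orthogonal, their Calkin images sit in mutually orthogonal corners, so $\sigma_\ess\bigl(\sum_n T_n\bigr)=\bigcup_n\sigma_\ess(T_n)$. An algebraic manipulation gives $T_n\equiv c_n\chi_n(H_0)D_n\equiv c_n D_n\chi_n(H_0)$ mod compacts (using $\chi_n(H)\equiv\chi_n(H_0)$), and a further localization—mirroring the split above applied now to the single $D_n$—identifies $\sigma_\ess(T_n)$ with $c_n\cdot\sigma_\ess(D_n)=[-c_n\alpha(\lambda_n),c_n\alpha(\lambda_n)]$ via Theorem~\ref{cr.a2}. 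Combining yields \eqref{a16} for $\varphi$ with finitely many jumps. The countable-jump case follows by uniform approximation of $\varphi$ by such finite-jump functions (possible since $|c_n|\to 0$), the bound $\norm{\varphi(H)-\varphi(H_0)-\psi(H)+\psi(H_0)}\leq 2\norm{\varphi-\psi}_\infty$, and upper semicontinuity of the essential spectrum under operator-norm perturbations (together with $\alpha(\lambda)\leq 1$). The \emph{main obstacle} is Paragraph~2: the spectral families of $H_0$ and $H$ do not commute modulo compacts in general, so the additivity of essential spectra across the jumps is not automatic and hinges on the compactness of $E(U)E_0(V)$ for disjoint intervals, which is the pivotal input beyond Theorem~\ref{cr.a2}.
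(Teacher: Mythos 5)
Your overall strategy --- localize at the jumps, show the localized pieces are essentially orthogonal by reducing to the compactness of $E(U)E_0(V)$ for disjoint intervals, compute each block from Theorem~\ref{cr.a2}, and pass to infinitely many jumps by a tail estimate --- is exactly the paper's (Lemmas~\ref{lma.c4}, \ref{lma.c6}, \ref{lma.c7} and Section~\ref{sec.d2}). Two steps, however, are genuinely wrong as written. The first is the identity $\sum_n c_n=0$: a function in $PC_0(\R)$ can return to zero \emph{continuously} between jumps, so the jumps need not cancel (e.g.\ $\varphi(x)=(1-x)\mathbf 1_{[0,1)}(x)$ has a single jump $c=1$). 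Consequently $\varphi_c=\varphi+\sum_n c_n\theta_{\lambda_n}$ tends to $\sum_n c_n\neq 0$ at $-\infty$ and is \emph{not} in $C_0(\R)$, so the Stone--Weierstrass argument (which only gives compactness of $f(H)-f(H_0)$ for $f\in C_0(\R)$) does not apply to it; the same defect recurs when you discard $\sum_n c_n(1-\chi_n)\theta_{\lambda_n}$. What is actually needed is compactness of $f(H)-f(H_0)$ for continuous $f$ with possibly \emph{different} limits at $\pm\infty$; this is Lemma~\ref{lma.b4}, which rests on the nontrivial fact that $\tan^{-1}(H)-\tan^{-1}(H_0)$ is compact (\cite[Theorem~7.3]{Push2}), not on Stone--Weierstrass. (Note also that for infinitely many jumps the series $\sum_n c_n\theta_{\lambda_n}$ need not converge at all, since $\sum_n\abs{c_n}$ can diverge; the reduction must be confined to the finite-jump case.)

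The second and more serious gap is the passage from finitely many to countably many jumps. Uniform approximation plus upper semicontinuity of the essential spectrum proves only one inclusion: upper semicontinuity at $\delta(\varphi)$ says that $\sigma_\ess(\delta(\psi_N))$ is eventually contained in any neighbourhood of $\sigma_\ess(\delta(\varphi))$, which yields $\cup_\lambda[-\alpha(\lambda)\varkappa_\lambda(\varphi),\alpha(\lambda)\varkappa_\lambda(\varphi)]\subset\sigma_\ess(\delta(\varphi))$. The reverse inclusion would require lower semicontinuity of the spectrum along the approximating sequence, which fails for non-normal operators --- and $\delta(\varphi)$ is non-selfadjoint for complex $\varphi$; nor is there any bound of the form $\sigma_\ess(A+B)\subset\{z\in\C:\dist(z,\sigma_\ess(A))\le\norm{B}\}$ outside the normal case, so the parenthetical appeal to $\alpha(\lambda)\le1$ does not rescue it. The paper avoids this entirely: it groups the jumps by magnitude into pieces $\varphi_n$ with \emph{pairwise disjoint} singular supports and $\norm{\varphi_n}_\infty\le2^{-n}$, so that the tail $g_N=\sum_{n>N}\varphi_n$ satisfies $\delta(\varphi_m)\delta(g_N)\in\mathcal K(\calH)$ for $m\le N$; Lemma~\ref{lma.c6} then gives the \emph{exact} identity $\sigma_\ess(\delta(\varphi))\cup\{0\}=\bigl(\cup_{n\le N}\sigma_\ess(\delta(\varphi_n))\bigr)\cup\sigma_\ess(\delta(g_N))\cup\{0\}$ with $\sigma_\ess(\delta(g_N))$ contained in the disk of radius $2^{1-N}$, and letting $N\to\infty$ gives both inclusions in \eqref{a16}. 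You should replace your limiting argument by this one.
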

The proof is given in Section~\ref{sec.d}.

\subsection{The Fredholm property}\label{sec.a5}
A pair of orthogonal projections $P$, $Q$ in a Hilbert space
is called Fredholm, if (see e.g. \cite{ASS})
\begin{equation}
\pm1\notin\sigma_\ess(P-Q). 
\label{a13a}
\end{equation}
If $P,Q$ is a Fredholm pair, one defines the index of $P,Q$ by 
$$
\iindex(P,Q)=\dim\Ker(P-Q-I)-\dim\Ker(P-Q+I).
$$
In a forthcoming publication \cite{Push3}, we study the index of 
the pair 
\begin{equation}
E(-\infty,\lambda),\quad E_0(-\infty,\lambda).
\label{a14}
\end{equation}
In connection with this (and perhaps otherwise)
it is interesting to know whether the pair  \eqref{a14}
is Fredholm. Under the assumptions of Theorem~\ref{th1}, the 
question reduces to deciding whether $\alpha(\lambda)<1$ or $\alpha(\lambda)=1$. 
If \eqref{a9} holds true, then, clearly, the pair \eqref{a14} is 
Fredholm if and only if $-1$ is not an eigenvalue 
of the scattering matrix $S(\lambda)$. 
Below we give a convenient criterion for this in terms of 
the operators $T_0$, $T$.

For a bounded operator $M$, we denote $\Re M=(M+M^*)/2$, 
$\Im M=(M-M^*)/2i$. 
If the limits $T_0(\lambda+i0)$, $T(\lambda+i0)$ exist, we denote
\begin{equation}
A_0(\lambda)=\Re T_0(\lambda+i0), 
\quad
A(\lambda)=\Re T(\lambda+i0).
\label{a15}
\end{equation}
\begin{theorem}\label{th5}
Assume \eqref{a3}. Suppose that for some $\lambda\in\R$, the limits
$T_0(\lambda+i0)$, $T(\lambda+i0)$ and the derivatives
$F_0'(\lambda)$, $F'(\lambda)$ exist in the operator norm. 
Then the following statements are equivalent: 

(i) the pair \eqref{a14} is Fredholm;

(ii) $\Ker(I+A_0(\lambda)J)=\{0\}$; 

(iii) $\Ker(I-A(\lambda)J)=\{0\}$.
\end{theorem}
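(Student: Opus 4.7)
The plan is to use Theorem~\ref{th1} to replace (i) by the scalar condition $\alpha(\lambda)<1$, and then separately establish that each of (ii), (iii) is equivalent to this same inequality. The driver will be a short list of algebraic identities coming from the resolvent equation \eqref{a21}, together with the standard Stieltjes--Plemelj relations $\Im T_0(\lambda+i0)=\pi F_0'(\lambda)$ and $\Im T(\lambda+i0)=\pi F'(\lambda)$, which follow from the operator-norm existence of $F_0',F'$ via a Poisson-kernel argument applied to $T_0(z)=\int(\mu-z)^{-1}\,dF_0(\mu)$ and the analogous formula for $T$.

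Write $M_0=A_0+i\pi F_0'$ and $M=A+i\pi F'$. Passing to the limit $z=\lambda+i0$ in \eqref{a21} yields $(I+M_0J)(I-MJ)=I=(I-MJ)(I+M_0J)$. Expanding and separating the self-adjoint from the anti-self-adjoint parts produces four identities:
\[
(I+A_0J)(I-AJ)=I-\pi^2 F_0'JF'J,
\qquad
(I-AJ)(I+A_0J)=I-\pi^2 F'JF_0'J,
\]
\[
(I+A_0J)F'=F_0'(I-JA),
\qquad
(I-AJ)F_0'=F'(I+JA_0).
\]
The derivatives $F_0'(\lambda),F'(\lambda)$ are compact (as norm limits of compact operators), so $I\pm A_0J$ and $I\pm AJ$ are of the form $I+\text{compact}$, and the Fredholm alternative applies. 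Setting $B=F_0'(\lambda)^{1/2}JF'(\lambda)^{1/2}$, formula \eqref{a7} reads $\alpha(\lambda)=\pi\|B\|$; by the standard identity $\sigma(XY)\setminus\{0\}=\sigma(YX)\setminus\{0\}$, the nonzero spectra of $\pi^2 F_0'JF'J$, of $\pi^2 F'JF_0'J$, and of the compact positive operator $\pi^2 BB^*$ all coincide. Since $\alpha(\lambda)\le 1$ always (Remark~1 after Theorem~\ref{th1}), it follows that $\alpha(\lambda)=1$ iff $1$ is an eigenvalue of $\pi^2 F_0'JF'J$, iff $1$ is an eigenvalue of $\pi^2 F'JF_0'J$.

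The implication (i)$\Rightarrow$(ii) and (i)$\Rightarrow$(iii) is then routine: when $\alpha(\lambda)<1$, both $I-\pi^2 F_0'JF'J$ and $I-\pi^2 F'JF_0'J$ are invertible, and the two product identities force $I+A_0J$ and $I-AJ$ to be injective, hence invertible by the Fredholm alternative. The main obstacle, which is what forces the imaginary-part identities into play, is the reverse direction (ii)$\Rightarrow$(i). Assume $\Ker(I+A_0J)=\{0\}$ so that $I+A_0J$ is invertible, and suppose toward a contradiction that $\alpha(\lambda)=1$. Pick $f\ne 0$ with $(I-\pi^2 F_0'JF'J)f=0$; then $(I+A_0J)(I-AJ)f=0$, and injectivity of $I+A_0J$ gives $(I-AJ)f=0$, i.e.\ $AJf=f$, and therefore $JAJf=Jf$. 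Set $g=F'Jf$; applying the identity $(I+A_0J)F'=F_0'(I-JA)$ to the vector $Jf$ one obtains $(I+A_0J)g=F_0'(Jf-JAJf)=0$. Moreover $f=\pi^2 F_0'JF'Jf=\pi^2 F_0'Jg$, so $g=0$ would force $f=0$; hence $g$ is a nonzero element of $\Ker(I+A_0J)$, contradicting (ii). The symmetric argument---starting from $\Ker(I-AJ)=\{0\}$, using $(I-AJ)(I+A_0J)=I-\pi^2 F'JF_0'J$ together with $(I-AJ)F_0'=F'(I+JA_0)$, and setting $g=F_0'Jf$---yields (iii)$\Rightarrow$(i).
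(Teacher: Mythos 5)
Your argument is correct in substance and reaches the theorem by a genuinely different route from the paper's. The paper introduces the unitary operator $\wt S(\lambda)$ of Lemma~\ref{lma.c8}, shows $\dim\Ker(I+\wt S(\lambda))=\dim\Ker(I+A_0(\lambda)J)$, and converts the Fredholm question into whether $-1$ is an eigenvalue of $\wt S(\lambda)$, exploiting unitarity and compactness of $\wt S(\lambda)-I$ together with $\tfrac12\norm{\wt S(\lambda)-I}=\alpha(\lambda)$; the equivalence (ii)$\Leftrightarrow$(iii) is obtained there directly from the single conjugation identity $I-A(\lambda)J=(I+T_0(\lambda+i0)J)^{-1}(I+A_0(\lambda)J)(I+T_0(\lambda+i0)^*J)^{-1}$, which gives equality of the kernel dimensions. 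You bypass $\wt S(\lambda)$ entirely and argue with eigenvectors of the compact operator $\pi^2F_0'JF'J$; this is a workable and arguably more elementary alternative (no unitarity is invoked), at the price that (ii)$\Leftrightarrow$(iii) is only obtained by passing through (i).

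Two points in your derivation need repair, though neither is fatal. First, you cannot literally ``separate the self-adjoint from the anti-self-adjoint parts'' of $(I+M_0J)(I-MJ)=I$: the summands in the expansion, such as $(I+A_0J)(I-AJ)$ and $F_0'J(I-AJ)-(I+A_0J)F'J$, are neither self-adjoint nor anti-self-adjoint, so an identity $X+iY=0$ does not split termwise. What does work is to pair \eqref{c7a}, i.e. $(I+T_0(\lambda+i0)J)(I-T(\lambda+i0)J)=I$, with the adjoint of \eqref{c7b}, i.e. $(I+T_0(\lambda+i0)^*J)(I-T(\lambda+i0)^*J)=I$, and add and subtract these two valid identities; this produces exactly your product identities. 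Second, the linear intertwining identities then come out with a trailing $J$: one obtains $(I+A_0J)F'J=F_0'(I-JA)J$ and $(I-AJ)F_0'J=F'(I+JA_0)J$, not the versions without the final $J$ (those would require $J$ to be invertible). Since in your contradiction argument you only ever apply these identities to vectors of the form $Jf$, this changes nothing. With these corrections the proof goes through.
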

The proof is given in Section~\ref{sec.c}. 
Theorem~\ref{th5} can be applied to either smooth or trace class
framework. 
In applications, one can often obtain some information about the spectrum
of $A_0(\lambda)$ or $A(\lambda)$; for example, one can sometimes
ensure that the norm of $A_0(\lambda)$ is small. 
By Theorem~\ref{th5}, this can be used to ensure that the 
pair \eqref{a14} is Fredholm.

\begin{remark*}
\begin{enumerate}[1.]
\item
Since $\dim\Ker(I+XY)=\dim\Ker(I+YX)$ for any bounded operators $X$, $Y$, 
we can equivalently restate (ii), (iii) as 

(iia) $\Ker(I+JA_0(\lambda))=\{0\}$; 

(iiia) $\Ker(I-JA(\lambda))=\{0\}$.

\item
If the operator $J$ has a bounded inverse, 
we can equivalently restate (ii), (iii) in a more symmetric form as 

(iib) $\Ker(J^{-1}+A_0(\lambda))=\{0\}$; 

(iiib) $\Ker(J^{-1}-A(\lambda))=\{0\}$.

\end{enumerate}
\end{remark*}

\subsection{Schr\"odinger operator: smooth framework}\label{sec.a7}
Let $H_0=-\Delta$ in $\mathcal H=L^2(\R^d)$, $d\geq 1$, and let
$H=H_0+V$, where $V$ is the operator of multiplication by a function
$V:\R^d\to\R$ which is assumed to satisfy
\begin{equation}
\abs{V(x)}\leq C(1+\abs{x})^{-\rho}, \quad \rho>1.
\label{a18}
\end{equation}
Let $\mathcal K=\mathcal H$, $G=\abs{V}^{1/2}$, $J=\sign V$. 
Under the assumption \eqref{a18}, 
the hypotheses \eqref{a3} and \eqref{a11} hold true with any 
$\Delta=(c_1,c_2)$, $0<c_1<c_2<\infty$, 
see e.g. \cite[Theorem~XIII.33]{RS4}. 
It is also easy to see 
that the derivatives $F_0'(\lambda)$, $F'(\lambda)$ exist in the operator 
norm for all $\lambda>0$. 
Thus, for any $\lambda>0$, the conclusions of Theorems~\ref{cr.a2}
and \ref{th5} hold true. In \cite{Push}, 
formula \eqref{a1} was proven for $H_0$ and $H$ as above 
only for $d\leq 3$. 
We also see that the conclusion of Theorem~\ref{th10} holds true
for any  $\varphi\in PC_0(\R)$ which is continuous in an open neighbourhood
of zero.

In this example there is a well known natural choice of the core
$\hat \sigma_\ac(H_0)=(0,\infty)$ and of the direct integral 
decomposition \eqref{a10} with $\mathfrak h(\lambda)=L^2(\mathbb S^{d-1})$. 
Moreover, in this case the scattering matrix 
$S(\lambda): L^2(\mathbb S^{d-1})\to L^2(\mathbb S^{d-1})$ 
is continuous in $\lambda>0$.
Thus, in this case the statement \eqref{a1} holds true for all $\lambda>0$.

\subsection{Schr\"odinger operator: trace class framework}\label{sec.a8}
Let $H_0=-\Delta+U$ in $L^2(\R^d)$, $d\geq1$, where $U$ is the operator
of multiplication by a real valued bounded function. 
Next, let $H=H_0+V$, where $V$ is the operator of multiplication
by a real valued function $V\in L^1(\R^d)$ such that $V$ 
is $(-\Delta)$-form compact. Then $V$ is also $H_0$-form compact
and $H=H_0+V$ is well defined as a form sum. It is well known 
(see e.g. \cite[Theorem~B.9.1]{Simon}) that under the above assumptions, 
\eqref{a13} holds true with $G=\abs{V}^{1/2}$ for $m>d/4$. 
Thus, the conclusions of Theorem~\ref{cr.a4} hold true. 

The assumptions on $H_0$ in this example can be considerably 
relaxed by allowing $U$ to have
local singularities, by including a background magnetic field, etc.
Note that in this example we have no information on the 
a.c. spectrum of $H_0$.

\section{Proof of Theorem~\ref{th1}}\label{sec.b}

We follow the method of \cite{Push} with some minor technical 
improvements. 
In order to simplify our notation, we assume $\lambda=0$
and denote $\R_+=(0, \infty)$, $\R_-=(-\infty,0)$. 

\subsection{The proof of \eqref{a7}}\label{sec.b0}
Let us prove that if the derivatives  $F_0'(0)$ and $F'(0)$ exist 
in the operator norm, then the limit \eqref{a7a} also exists 
and the identity \eqref{a7} holds true. 
Let us start from the r.h.s. of \eqref{a7}. 
Denoting $\delta_\varepsilon=(-\varepsilon,\varepsilon)$
and using the identities $\norm{X}^2=\norm{XX^*}=\norm{X^*X}$, 
we get
\begin{align*}
\norm{F_0'(0)^{1/2}JF'(0)^{1/2}}^2
&=
\norm{F_0'(0)^{1/2}JF'(0)JF_0'(0)^{1/2}}
\\
&=
\lim_{\varepsilon\to+0}\frac1\varepsilon
\norm{F_0'(0)^{1/2}JGE(\delta_\varepsilon)(GE(\delta_\varepsilon))^*JF_0'(0)^{1/2}}
\\
&=
\lim_{\varepsilon\to+0}\frac1\varepsilon
\norm{(GE(\delta_\varepsilon))^*JF_0'(0)JGE(\delta_\varepsilon)}
\\
&=
\lim_{\varepsilon\to+0}\frac1{\varepsilon^2}
\norm{(GE(\delta_\varepsilon))^*JGE_0(\delta_\varepsilon)(GE_0(\delta_\varepsilon))^*JGE(\delta_\varepsilon)}
\\
&=
\lim_{\varepsilon\to+0}\frac1{\varepsilon^2}
\norm{(GE(\delta_\varepsilon))^*JGE_0(\delta_\varepsilon)}^2,
\end{align*}
as required. 

In the rest of this section, we prove that if the derivatives 
$F_0'(0)$ and $F'(0)$ exist 
in the operator norm, then the identity
\begin{equation}
\sigma_\ess\bigl(E(\R_-)-E_0(\R_-)\bigr)=[-\alpha(0),\alpha(0)]
\label{b1}
\end{equation}
holds true with 
$\alpha(0)=\pi\norm{F_0'(0)^{1/2}JF'(0)^{1/2}}$.

\subsection{The kernels of $H_0$ and $H$ }\label{sec.b1}

\begin{lemma}\label{lma.b1}
Assume \eqref{a3} and suppose that the derivatives $F_0'(0)$ and $F'(0)$ 
exist in the operator norm. Then $\Ker H_0=\Ker H$. 
\end{lemma}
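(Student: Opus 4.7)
The plan is to show that operator-norm differentiability of $F_0$ and $F$ at $0$ forces $GE_0(\{0\})=0$ and $GE(\{0\})=0$, and then to use the resolvent identity \eqref{a4} to transfer kernel vectors between $H_0$ and $H$.

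First I would establish the preliminary claim: \emph{if $F_0'(0)$ exists in the operator norm, then $GE_0(\{0\})=0$.} By \eqref{a5a}, for any $\lambda_1<0<\lambda_2$ one has
\[
\norm{F_0(\lambda_2)-F_0(\lambda_1)}
=
\norm{GE_0[\lambda_1,\lambda_2)}^2.
\]
Norm differentiability at $0$ yields the Lipschitz bound $\norm{F_0(\lambda_2)-F_0(\lambda_1)}\leq C(\lambda_2-\lambda_1)$ for small $\lambda_1,\lambda_2$, so $\norm{GE_0[\lambda_1,\lambda_2)}\to 0$ as $\lambda_1,\lambda_2\to 0$. Since $E_0(\{0\})=E_0[\lambda_1,\lambda_2)E_0(\{0\})$ and $\Ran E_0(\{0\})\subset\Dom H_0\subset\Dom G$, the composition $GE_0(\{0\})$ makes sense and equals $GE_0[\lambda_1,\lambda_2)E_0(\{0\})$, whose norm tends to $0$. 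Thus $GE_0(\{0\})=0$. The same argument with $F$ in place of $F_0$ (using \eqref{a20} to guarantee $\Ran E(\{0\})\subset\Dom G$) gives $GE(\{0\})=0$.

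Next I would prove $\Ker H_0\subset\Ker H$ via \eqref{a4}. Take $f\in\Ker H_0$, so $f=E_0(\{0\})f$ and $Gf=GE_0(\{0\})f=0$. For $z\in\C\setminus\R$ we have $R_0(z)f=-f/z$, and therefore $GR_0(z)f=-Gf/z=0$. The correction term in \eqref{a4} then vanishes and we get $R(z)f=R_0(z)f=-f/z$, which means $f\in\Dom H$ and $Hf=0$, i.e.\ $f\in\Ker H$.

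For the reverse inclusion $\Ker H\subset\Ker H_0$, I would argue by the evident symmetry of the setup: the factorisation $-V=G^*(-J)G$ together with \eqref{a3} allows one to reconstruct $H_0$ from $H$ by the same procedure of Section~\ref{sec.a2}, and the hypotheses of the lemma are symmetric in $(H_0,F_0)$ and $(H,F)$. So the previous paragraph applies with the roles of $H_0$ and $H$ interchanged. (If one prefers a direct argument, rearranging \eqref{a21} gives $(I-T(z)J)^{-1}=I+T_0(z)J$, which lets one write $R_0(z)$ as $R(z)$ plus a correction of the form $(GR(\bar z))^*J(I-T(z)J)^{-1}GR(z)$, and the argument of the previous paragraph applies verbatim with $GR(z)f=-Gf/z=0$.) The only real subtlety in the whole proof is the preliminary claim $GE_0(\{0\})=GE(\{0\})=0$; once that is in hand, the resolvent identity does all the remaining work.
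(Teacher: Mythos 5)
Your proof is correct and follows essentially the same route as the paper: one first checks that norm-differentiability of $F_0$ and $F$ at $0$ forces $GE_0(\{0\})=GE(\{0\})=0$ (a point the paper only asserts parenthetically and which you rightly spell out), and then the resolvent identity kills the correction term in both directions. For the inclusion $\Ker H\subset\Ker H_0$ the paper uses exactly your parenthetical direct argument, via the resolvent identity $R(z)=R_0(z)-(GR_0(\overline z))^*JGR(z)$ in which $GR(z)$ appears on the right; that version is preferable to the appeal to symmetry, since verifying that \eqref{a3} holds with $H$ in place of $H_0$ (and that the construction applied to $(H,-V)$ returns $H_0$) would itself require an argument.
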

\begin{proof}
1. By our assumptions, $GE_0(\{0\})=0$ (otherwise $F_0'(0)$ cannot exist). 
Suppose $\psi\in\Ker H_0$; then $G\psi=0$ and the resolvent identity 
\eqref{a4} yields 
$$
R(z)\psi=R_0(z)\psi=-\frac1z\psi.
$$
Thus, $H\psi=0$. This proves that $\Ker H_0\subset \Ker H$. 

2. From \eqref{a4} it is not difficult to obtain the ``usual''
resolvent identity (see e.g. \cite[Section~1.10]{Yafaev}):
\begin{equation}
R(z)=R_0(z)-(GR_0(\overline z))^*JGR(z).
\label{b2}
\end{equation}
Now let $\psi\in\Ker H$. 
As above, $GE(\{0\})=0$, and so  from \eqref{b2} one obtains
$$
R_0(z)\psi=R(z)\psi=-\frac1z\psi.
$$
Thus, $H_0\psi=0$ and so $\Ker H\subset \Ker H_0$. 
\end{proof}

\subsection{Reduction to the products of spectral projections}\label{sec.b2}
Let us denote 
$$
D=E(\R_-)-E_0(\R_-), \quad
\calH_+=\Ker(D-I), \quad \calH_-=\Ker(D+I), 
\quad \calH_0=(\calH_+\oplus\calH_-)^\perp.
$$
It is well known (see e.g. \cite{Halmos} or \cite{ASS}) that 
\begin{equation}
D|_{\calH_0} \text{ is unitarily equivalent to } (-D)|_{\calH_0}. 
\label{b3}
\end{equation}
Therefore, the spectral analysis of $D$ reduces to the spectral 
analysis of $D^2$ and to the evaluation of the dimensions 
of $\calH_+$ and $\calH_-$. 

Next, using Lemma~\ref{lma.b1}, by a simple algebra 
we obtain the identity 
\begin{equation}
D^2=E_0(\R_-)E(\R_+)E_0(\R_-)+E_0(\R_+)E(\R_-)E_0(\R_+), 
\label{b4}
\end{equation}
where the r.h.s. provides a block-diagonal decomposition 
of $D^2$ with respect to the direct sum 
$$
\calH=\Ran E_0(\R_-)\oplus\Ran E_0(\{0\})\oplus \Ran E_0(\R_+).
$$
Thus, the spectral analysis of $D^2$ reduces to the spectral 
analysis of the two terms in the r.h.s. of \eqref{b4}. 
In Sections~\ref{sec.b3}--\ref{sec.b6} we prove
\begin{lemma}\label{lma.b2}
Assume \eqref{a3}. Then the differences
\begin{gather}
E_0(\R_+)E(\R_-)E_0(\R_+)-E_0(0,1)E(-1,0)E_0(0,1)
\label{b6}
\\
E_0(\R_-)E(\R_+)E_0(\R_-)-E_0(-1,0)E(0,1)E_0(-1,0)
\label{b5}
\end{gather}
are compact operators. 
\end{lemma}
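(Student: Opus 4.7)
The plan is to reduce, by algebraic manipulation, both differences to the compactness of pair products of spectral projections with disjoint spectral sets, and then to exploit the compactness of $R(z)-R_0(z)$ supplied by assumption \eqref{a3} together with the resolvent identity \eqref{a4}.

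Setting $P=E_0(\R_+)$, $Q=E(\R_-)$, $P_1=E_0(0,1)$, $Q_1=E(-1,0)$, one has $P-P_1=E_0[1,\infty)$ and $Q-Q_1=E(-\infty,-1]$. The telescoping identity
\[
PQP-P_1Q_1P_1=(P-P_1)QP+P_1(Q-Q_1)P+P_1Q_1(P-P_1)
\]
reduces \eqref{b6} to compactness of three triple products. Splitting further $E(\R_-)=E(-\infty,-1]+E(-1,0)$ in the first of them, and using that compact times bounded is compact, it suffices to show that the three pair products
\[
E_0(0,1)E(-\infty,-1],\qquad E_0[1,\infty)E(-1,0),\qquad E_0[1,\infty)E(-\infty,-1]
\]
are all compact. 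The second difference \eqref{b5} is treated by a symmetric argument.

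Under \eqref{a3}, the operators $GR_0(z)$ and $(GR_0(\bar z))^*$ are compact, so by \eqref{a4} the resolvent difference $R(z)-R_0(z)$ is compact for every $z\in\C\setminus\R$. A standard consequence is that $\phi(H)-\phi(H_0)$ is compact for every $\phi\in C_0(\R)$. For the first two pair products above, one of the spectral sets is bounded; choosing $\phi\in C_0(\R)$ with compact support, equal to $1$ on a neighborhood of the bounded set and vanishing on a neighborhood of the other, one checks that $\phi$ annihilates the appropriate projection. For instance, with $\phi\equiv 1$ on $[0,1]$ and $\phi\equiv 0$ on $(-\infty,-1/2]$, one has $E_0(0,1)\phi(H_0)=E_0(0,1)$ and $\phi(H)E(-\infty,-1]=0$, so
\[
E_0(0,1)E(-\infty,-1]=E_0(0,1)\bigl[\phi(H_0)-\phi(H)\bigr]E(-\infty,-1]
\]
is compact; the second pair is handled symmetrically.

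The main obstacle is the remaining pair $K:=E_0[1,\infty)E(-\infty,-1]$, in which both spectral sets are unbounded so that no compactly supported cutoff function works. I would resolve this by a Sylvester--Rosenblum argument at the resolvent level. Since $R_0(i)$ commutes with $E_0[1,\infty)$ and $R(i)$ commutes with $E(-\infty,-1]$,
\[
R_0(i)K-KR(i)=E_0[1,\infty)\bigl[R_0(i)-R(i)\bigr]E(-\infty,-1],
\]
which is compact. Restricted to their invariant subspaces $\Ran E_0[1,\infty)$ and $\Ran E(-\infty,-1]$, the operators $R_0(i)$ and $R(i)$ are bounded with spectra contained respectively in $\{(\lambda-i)^{-1}:\lambda\geq 1\}\subset\{\Re z>0\}$ and $\{(\mu-i)^{-1}:\mu\leq -1\}\subset\{\Re z<0\}$, which are disjoint. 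By the Rosenblum formula, $K$ is therefore expressible as a norm-convergent contour integral of the compact right-hand side, and is consequently compact. Applying the same scheme to the second difference completes the proof.
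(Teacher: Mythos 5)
Your telescoping reduction to the three pair products, and your treatment of the two products in which one of the spectral sets is bounded, are correct and essentially coincide with the paper's device of sandwiching a compact difference $\phi(H)-\phi(H_0)$ between the two projections. The gap is in step 3, the doubly unbounded product $K=E_0[1,\infty)E(-\infty,-1]$, which is in fact the whole crux of the lemma. Your assertion that $\sigma\bigl(R_0(i)|_{\Ran E_0[1,\infty)}\bigr)\subset\{\Re z>0\}$ and $\sigma\bigl(R(i)|_{\Ran E(-\infty,-1]}\bigr)\subset\{\Re z<0\}$ is not correct: these spectra lie in the \emph{closures} of the sets $\{(\lambda-i)^{-1}:\lambda\ge1\}$ and $\{(\mu-i)^{-1}:\mu\le-1\}$, and both closures contain the point $0$. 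The lemma assumes only \eqref{a3}; neither $H_0$ nor $H$ is semi-bounded, so $\sigma(H_0)$ may be unbounded above and $\sigma(H)$ unbounded below, in which case $0$ genuinely belongs to both spectra. They are then not disjoint, no separating contour exists, the Sylvester map $X\mapsto R_0(i)X-XR(i)$ is not invertible, and the Rosenblum formula is unavailable; knowing that $K$ satisfies a Sylvester equation with compact right-hand side then does not force $K$ to be compact, since the homogeneous equation may have non-compact solutions. No other choice of the spectral parameter $z$ repairs this, because $(\lambda-z)^{-1}\to0$ as $\lambda\to\pm\infty$ for every fixed $z$, so the images of the two unbounded spectral sets always accumulate at the common point $0$.

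This is exactly why the paper does not argue at the level of resolvents. It first establishes (Lemma~\ref{lma.b4}) that $\varphi(H)-\varphi(H_0)$ is compact for every continuous $\varphi$ with (possibly different) finite limits at $\pm\infty$; the essential input is the compactness of $\tan^{-1}(H)-\tan^{-1}(H_0)$ quoted from \cite[Theorem~7.3]{Push2}, a genuinely nontrivial fact proved via the invariance principle which is \emph{not} a formal consequence of the compactness of $R(z)-R_0(z)$. With $\varphi_1$ continuous, equal to $1$ on $(-\infty,-1]$ and to $0$ on $[0,\infty)$, one then writes $E(-\infty,-1)E_0(\R_+)=E(-\infty,-1)(\varphi_1(H)-\varphi_1(H_0))E_0(\R_+)$, which disposes of the doubly unbounded product in one stroke. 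Your argument becomes complete if you either invoke that result, or add a semi-boundedness hypothesis (under which your two restricted resolvents do have disjoint compact spectra and the Rosenblum integral applies); as written, it fails in the generality in which the lemma is stated.
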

\begin{theorem}\label{thm.b3}
Assume \eqref{a3} and suppose that the derivatives
$F_0'(0)$ and $F'(0)$ exist in the operator norm. 
Then 
\begin{gather}
\sigma_\ess(E_0(0,1)E(-1,0)E_0(0,1))=[0,\alpha(0)^2],
\label{b8}
\\
\sigma_\ess(E_0(-1,0)E(0,1)E_0(-1,0))=[0,\alpha(0)^2],
\label{b7}
\end{gather}
where $\alpha(0)$ is given by 
$\alpha(0)=\pi\norm{F_0'(0)^{1/2}JF'(0)^{1/2}}$.
In particular, $\alpha(0)\leq 1$. 
\end{theorem}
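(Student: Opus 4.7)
My plan is to reduce \eqref{b8} to computing the essential singular values of $X := E(-1,0)E_0(0,1)$, via the identity $E_0(0,1)E(-1,0)E_0(0,1) = X^*X$. Proving $\sigma_\ess(|X|) = [0, \alpha(0)]$ then yields \eqref{b8}; the proof of \eqref{b7} is parallel with the roles of $H_0$ and $H$ interchanged, so I focus on \eqref{b8}. A first preparatory step is to localize near the discontinuity point $\lambda = 0$. For $\chi\in C_0^\infty(\R)$, the difference $\chi(H)-\chi(H_0)$ is compact, as follows from the Helffer--Sj\"ostrand functional calculus applied to the resolvent identity \eqref{a4} together with \eqref{a3}. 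Using disjoint smooth cut-offs, this implies compactness of $E(I_1)E_0(I_2)$ whenever $\dist(I_1, I_2)>0$, and hence
$$
X \equiv E(-\delta,0)E_0(0,\delta) \pmod{\text{compact operators}}
$$
for every $\delta>0$, so the essential-spectrum computation is reduced to an arbitrarily small window around $0$.

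For the upper bound $\sigma_\ess(X^*X) \subset [0,\alpha(0)^2]$, I would work from the weak-form identity \eqref{a4a} and spectral calculus to obtain, for separated spectral intervals (and by a regularisation limit at the shared endpoint $0$), a representation
$$
E(I_1)E_0(I_2) = \int_{I_1}\int_{I_2} \frac{1}{\lambda-\mu}\,(G\,dE(\lambda))^*\,J\,(G\,dE_0(\mu)).
$$
The hypothesis that $F_0'(0)$ and $F'(0)$ exist in operator norm enables a flat-density approximation on short intervals: $GE_0(I)(GE_0(I))^* \approx |I|\,F_0'(0)$ and $GE(I)(GE(I))^* \approx |I|\,F'(0)$ when $I$ is a short interval near $0$. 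Substituting these asymptotics into the double integral and invoking the chain of identities already carried out in Section~\ref{sec.b0} (which links $\norm{F_0'(0)^{1/2}JF'(0)^{1/2}}$ to the $G$-factored products appearing in \eqref{a7a}), one obtains $\norm{E(-\delta,0)E_0(0,\delta)} \leq \alpha(0)+o(1)$ as $\delta\to 0$. This establishes the upper inclusion of $\sigma_\ess(X^*X)$ and, incidentally, confirms $\alpha(0)\leq 1$.

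The main obstacle is the lower bound: every $t\in[0,\alpha(0)^2]$ must belong to $\sigma_\ess(X^*X)$, i.e., an orthonormal Weyl sequence $\{\psi_n\}\subset\Ran E_0(0,1)$ with $\norm{(X^*X-t)\psi_n}\to 0$ must be exhibited. For the endpoint $t=\alpha(0)^2$, I would take $\psi_n\in\Ran E_0(0,\delta_n)$ with $\delta_n\to 0$, chosen to nearly maximise the quadratic form $(\psi, X^*X\psi)$ on the shrinking subspace; mutual orthogonality is arranged by taking nested disjoint windows. For intermediate $t$, I would deform $\psi$ continuously inside $\Ran E_0(0,\delta)$ and invoke the intermediate-value theorem on the real-valued map $\psi \mapsto (\psi, X^*X\psi)$, using the operator-norm continuity provided by $F_0'(0), F'(0)$ to guarantee that every value in $[0,\alpha(0)^2]$ is realised. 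This interval-filling step, adapted from the earlier paper \cite{Push}, is the delicate ingredient and is the principal reason the theorem demands the operator-norm (rather than merely weak or strong) existence of the derivatives.
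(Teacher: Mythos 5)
Your reduction to $X=E(-1,0)E_0(0,1)$ and the localization modulo compact operators are sound (they correspond to Lemma~\ref{lma.b2} in the paper), and the upper bound, though only sketched, is directionally plausible: making the ``flat-density'' step rigorous with the sharp constant $\pi$ amounts to the boundedness of an operator-valued Carleman/Hankel kernel $1/(t+s)$, which is exactly Proposition~\ref{lma.b5}(i). The genuine gap is in the lower bound, i.e. the claim that \emph{every} $t\in[0,\alpha(0)^2]$ lies in $\sigma_\ess(X^*X)$. The intermediate-value argument you propose controls only the quadratic form $\psi\mapsto(\psi,X^*X\psi)$: producing unit vectors in shrinking spectral windows with $(\psi,X^*X\psi)=t$ places $t$ in the closure of the numerical range, not in the essential spectrum. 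A Weyl sequence for $t$ requires $\norm{(X^*X-t)\psi_n}\to0$, which continuity of the quadratic form does not deliver; an orthogonal projection of infinite rank and corank has quadratic-form values filling all of $[0,1]$ but essential spectrum $\{0,1\}$. Nothing in your construction excludes the analogous scenario here, in which $\sigma_\ess(X^*X)$ would be a proper closed subset of $[0,\alpha(0)^2]$. This is also not how the interval is filled in \cite{Push}.

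The missing ingredient is a reduction to an explicit model operator with \emph{continuous} spectrum. The paper introduces $L_0f=\int_0^\infty e^{-tH_0}(GE_0(0,1))^*f(t)\,dt$ and $Lf=\int_0^\infty e^{tH}(GE(-1,0))^*f(t)\,dt$, proves the algebraic identity $E(-1,0)E_0(0,1)=-LJL_0^*$ (Lemma~\ref{lma.b6}), and shows that $L_0^*L_0$ and $L^*L$ coincide, modulo compacts, with $\Gamma\otimes F_0'(0)$ and $\Gamma\otimes F'(0)$, where $\Gamma$ is the Hankel operator with kernel $(1-e^{-t-s})/(t+s)$ and $\sigma(\Gamma)=[0,\pi]$. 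Repeated use of $M^*M\approx MM^*$ then identifies $E_0(0,1)E(-1,0)E_0(0,1)$, modulo compacts, with $\Gamma^2\otimes Q$, $Q=F'(0)^{1/2}JF_0'(0)JF'(0)^{1/2}$, whose essential spectrum is $\bigcup_{n}[0,\lambda_n\pi^2]=[0,\pi^2\norm{Q}]=[0,\alpha(0)^2]$ precisely because $\sigma(\Gamma^2)$ is the full interval $[0,\pi^2]$. It is this model operator, not an intermediate-value argument, that fills the interval; your proof needs this (or an equivalent) mechanism to close.
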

With these two statements, it is easy to provide 
\begin{proof}[Proof of Theorem~\ref{th1}]
Combining 
Lemma~\ref{lma.b2}, Theorem~\ref{thm.b3}
and Weyl's theorem on the invariance of the essential 
spectrum under  compact perturbations, we obtain
\begin{equation}
\sigma_\ess(E_0(\R_-)E(\R_+)E_0(\R_-))
=\sigma_\ess(E_0(\R_+)E(\R_-)E_0(\R_+))
=[0,\alpha(0)^2].
\label{b9}
\end{equation}
By \eqref{b4}, it follows that
\begin{equation}
\sigma_\ess(D^2)=[0,\alpha(0)^2].
\label{b23}
\end{equation}
Suppose first that $\alpha(0)=1$. 
Then from \eqref{b23} and \eqref{b3} we obtain
$\sigma_\ess(D)=[-1,1]$, as required. 
Next, suppose $\alpha(0)<1$. 
Then from \eqref{b23} it follows that the dimensions
of $\calH_-$ and $\calH_+$ are finite, and therefore
$$
\sigma_\ess(D)=\sigma_\ess(D|_{\calH_0})
\text{ and }
\sigma_\ess(D^2)=\sigma_\ess((D|_{\calH_0})^2).
$$
Recalling \eqref{b3}, 
we obtain
$$
\sigma_\ess(D|_{\calH_0})=[-\alpha(0),\alpha(0)],
$$
and
\eqref{b1} follows. 
\end{proof}

\subsection{Proof of Lemma~\ref{lma.b2}}\label{sec.b3}

\begin{lemma}\label{lma.b4}
Assume \eqref{a3}. Let $\varphi\in C(\R)$ be a function such that
the limits $\lim_{x\to\pm\infty}\varphi(x)$ exist. 
Then the difference $\varphi(H)-\varphi(H_0)$ is compact. 
\end{lemma}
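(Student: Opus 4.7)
My approach is to reduce everything to compactness of the resolvent difference, then use Stone--Weierstrass on the Cayley transform, with a single model function handling possibly-distinct limits at $\pm\infty$.

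The factorisation $GR_0(z)=[G(\abs{H_0}+I)^{-1/2}][(\abs{H_0}+I)^{1/2}R_0(z)]$ together with \eqref{a3} shows that $GR_0(z)$, and likewise $(GR_0(\overline z))^*$, is compact for $z\in\C\setminus\R$; plugged into the resolvent identity \eqref{a4}, this forces $R(z)-R_0(z)$ to be compact. Equivalently, the Cayley transforms $U=(H-iI)(H+iI)^{-1}$ and $U_0=(H_0-iI)(H_0+iI)^{-1}$ are unitaries with $U-U_0=-2i[R(-i)-R_0(-i)]$ compact. From here, any $\tilde\psi\in C(\mathbb{T})$ is a uniform limit of Laurent polynomials in $\zeta,\overline\zeta$; for each polynomial, $\tilde\psi(U)-\tilde\psi(U_0)$ telescopes into a sum of products each containing a factor $U-U_0$ or $U^*-U_0^*$, hence is compact, and norm approximation extends this to all of $C(\mathbb{T})$. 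Via the Cayley correspondence $\zeta=(x-i)/(x+i)$, this handles every $\varphi\in C(\R)$ with $\varphi(+\infty)=\varphi(-\infty)$, in particular every $\varphi\in C_0(\R)$.

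Writing $a_\pm=\lim_{x\to\pm\infty}\varphi(x)$ for the two limits (possibly distinct), decompose
$$
\varphi=\tfrac{a_++a_-}{2}+\tfrac{a_+-a_-}{2}\,\eta+\varphi_0,\qquad\eta(x)=\tfrac{2}{\pi}\arctan(x),\ \varphi_0\in C_0(\R).
$$
The constant piece cancels and the $\varphi_0$ piece contributes a compact operator by the previous step, so the proof reduces to compactness of $\eta(H)-\eta(H_0)$. For this I plan to use
$$
\arctan(H)-\arctan(H_0)=\tfrac12\int_1^\infty\bigl\{[R(iy)-R_0(iy)]+[R(-iy)-R_0(-iy)]\bigr\}\,dy,
$$
obtained from $\arctan(x)=\int_0^1\tfrac{x\,du}{1+u^2x^2}$ via the substitution $y=1/u$ and the partial-fraction identity $\tfrac{x}{1+u^2x^2}=\tfrac{1}{2u^2}[(x-i/u)^{-1}+(x+i/u)^{-1}]$. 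Each integrand is compact, and the integrated operator is compact provided the integral converges in operator norm.

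This last point is the main obstacle: the estimate $\|R(iy)-R_0(iy)\|=O(1/y)$ coming from \eqref{a3} gives only a logarithmically divergent integral. One remedy is to sharpen this bound to $o(1/y)$ by exploiting the norm convergence $T_0(iy)\to 0$ as $y\to\infty$, which itself follows from the compactness of $G(\abs{H_0}+I)^{-1/2}$ combined with the strong convergence of $(\abs{H_0}+I)^{1/2}R_0(iy)(\abs{H_0}+I)^{1/2}$ to zero on bounded sets. An alternative that sidesteps the issue is to apply a Helffer--Sj\"ostrand formula to an almost-analytic extension $\widetilde{\arctan}$ chosen so that $\overline\partial\widetilde{\arctan}(z)$ vanishes sufficiently fast near $\R$ (and decays in $|z|$) to render the representation $-\pi^{-1}\int_\C\overline\partial\widetilde{\arctan}(z)[R(z)-R_0(z)]\,d^2z$ absolutely norm-convergent in the compact operators.
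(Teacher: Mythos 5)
Your reduction is exactly the one the paper uses: the resolvent difference is compact by \eqref{a3} and \eqref{a4}, hence $\varphi(H)-\varphi(H_0)$ is compact for every $\varphi\in C_0(\R)$ (the paper asserts this directly rather than going through the Cayley transform, but that is immaterial), and the lemma then reduces to exhibiting \emph{one} continuous function with distinct limits at $\pm\infty$ for which the difference is compact; both you and the paper take $\tan^{-1}$. The crucial difference is that the paper does not prove this last step: it quotes \cite[Theorem~7.3]{Push2}, where the compactness of $\tan^{-1}(H)-\tan^{-1}(H_0)$ under \eqref{a3} is established. You attempt to prove it, and you correctly identify the obstruction --- the bound $\norm{R(iy)-R_0(iy)}=O(1/y)$ sits exactly at the threshold of integrability --- but neither of your two remedies closes the gap.

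Concretely: (a) an $o(1/y)$ bound does not make $\int_1^\infty$ converge ($1/(y\log y)$ is $o(1/y)$), and under \eqref{a3} alone one cannot in general do better, since $\norm{GR_0(iy)}$ is only $o(y^{-1/2})$. Nor can you retreat to improper convergence: by the spectral theorem the truncated integral $\tfrac12\int_1^Y\{\cdots\}\,dy$ equals $[\tan^{-1}(H)-\tan^{-1}(H_0)]-[\tan^{-1}(H/Y)-\tan^{-1}(H_0/Y)]$, so norm convergence of the truncations is equivalent to $\tan^{-1}(H/Y)-\tan^{-1}(H_0/Y)\to0$, a rescaled version of the very statement being proved. (b) The Helffer--Sj\"ostrand route fails for the same reason in different coordinates: for a symbol of order zero such as $\tan^{-1}$ the standard almost-analytic extension satisfies $\abs{\bar\partial\widetilde{\varphi}(z)}\lesssim\langle\Re z\rangle^{-1-N}\abs{\Im z}^{N}$ on $\abs{\Im z}\lesssim\langle\Re z\rangle$, and combined with the bound $\norm{R(z)-R_0(z)}\lesssim\langle\Re z\rangle\abs{\Im z}^{-2}$ coming from \eqref{a3} and \eqref{a4}, the integral produces $\int\langle\Re z\rangle^{-1}\,d(\Re z)$ --- logarithmically divergent at $\abs{\Re z}\to\infty$ for every $N$. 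Making $\bar\partial\widetilde{\varphi}$ vanish faster near $\R$ addresses the wrong end of the integral; the divergence reflects the failure of $\tan^{-1}$ to decay at infinity, not its behaviour near the real axis. So the key step of your argument is missing: the honest options are to cite \cite[Theorem~7.3]{Push2} as the paper does, or to supply a genuinely new argument for that single function.
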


\begin{proof}

As is well known (and can easily  be deduced from the compactness of $R(z)-R_{0}(z)$ for $\Im z\neq 0$),
the operator $\varphi(H)-\varphi(H_0)$ 
is compact for any function $\varphi\in C_0(\R)$. 
Therefore, it suffices to prove that $\varphi(H)-\varphi(H_0)$
is compact for at least one function $\varphi\in C(\R)$ 
such that $\lim_{x\to\infty}\varphi(x)\not=\lim_{x\to-\infty}\varphi(x)$
and both limits exist. 
The latter fact is provided by  \cite[Theorem~7.3]{Push2} where 
it is proven that if \eqref{a3}
holds true then the difference $\tan^{-1}(H)-\tan^{-1}(H_0)$ is compact. 
\end{proof}

\begin{remark}\label{rm.b1}
Let $\mu\in\R\setminus(\sigma(H_0)\cup \sigma(H))$. 
Then 
\begin{equation}
E(-\infty, \mu)-E_0(-\infty, \mu)
=
\varphi(H)-\varphi(H_0)
\label{b10}
\end{equation}
for an appropriately chosen continuous function $\varphi$ with 
$\varphi(x)=1$ for  $x\in\sigma(H)\cup\sigma(H_0)$, 
$x<\mu$ and $\varphi(x)=0$ for  $x\in\sigma(H)\cup\sigma(H_0)$, 
$x>\mu$. 
It follows that the difference \eqref{b10} is compact. 
\end{remark}

\begin{proof}[Proof of Lemma~\ref{lma.b2}]
1. Let $\varphi_1\in C(\R)$ be such that $\varphi_1(x)=1$ for $x\leq-1$ 
and $\varphi_1(x)=0$ for $x\geq 0$. Then 
\begin{equation}
E(-\infty, -1)E_0(\R_+)
=
E(-\infty,-1)(\varphi_1(H)-\varphi_1(H_0))E_0(\R_+)
\label{b11}
\end{equation}
and so by Lemma~\ref{lma.b4} the r.h.s. is compact. 

2. Let $\varphi_2\in C(\R)$ be such that $\varphi_2(x)=1$ for $x\geq 1$ and 
$\varphi_2(x)=0$ for $x\leq 0$. Then 
\begin{equation}
E_0(1,\infty)E(\R_-)
=
E_0(1,\infty)(\varphi_2(H_0)-\varphi_2(H))E(\R_-),
\label{b12}
\end{equation}
and so by Lemma~\ref{lma.b4} the r.h.s. is compact. 

3. From the compactness of the l.h.s. of \eqref{b11} and \eqref{b12}, 
the compactness of the difference \eqref{b6} follows by some simple algebra. 
Compactness of \eqref{b5} is proven in the same way. 
\end{proof}

\subsection{Hankel operators}\label{sec.b4}
In order to prove Theorem~\ref{thm.b3}, 
we need some basic facts concerning operator valued 
Hankel  integral operators.
Suppose that for each $t>0$, a bounded self-adjoint operator $K(t)$ in $\calK$ is given. 
Suppose that $K(t)$ is continuous in $t>0$ in the operator norm.
Define a Hankel  integral operator $K$ in $L^2(\R_+,\calK)$ by
\begin{equation}
(Kf,g)_{L^2(\R_+,\calK)}=\int_0^\infty \int_0^\infty 
(K(t+s)f(t),g(s))_{\calK}\, dt\,ds,
\label{b13}
\end{equation}
when $f,g\in L^2(\R_+,\calK)$ are functions with 
compact support in $\R_+$. 
The statement below is a straightforward generalisation of
\cite[Proposition~1.1]{Howland3} 
to the operator valued case. 
\begin{proposition}\label{lma.b5}

(i) Suppose $\norm{K(t)}\leq C/t$ for all $t>0$.
Then the operator $K$ is bounded and $\norm{K}\leq \pi C$. 

(ii) Suppose $K(t)$ is compact for all $t$ and $\norm{K(t)}=o(1/t)$ as $t\to+0$ and 
as $t\to+\infty$. Then $K$ is compact.
\end{proposition}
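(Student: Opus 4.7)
The plan is to mimic the scalar proof from \cite{Howland3}, reducing the operator-valued setting to scalar estimates via pointwise norm bounds.

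For part (i), I would apply the Cauchy--Schwarz inequality in $\calK$ pointwise. For $f,g\in L^2(\R_+,\calK)$ with compact support, this gives
\[
\abs{(Kf,g)_{L^2(\R_+,\calK)}}\leq \int_0^\infty\!\!\int_0^\infty \norm{K(t+s)}\,\norm{f(s)}_\calK\,\norm{g(t)}_\calK\,dt\,ds\leq C\int_0^\infty\!\!\int_0^\infty \frac{\norm{f(s)}_\calK\,\norm{g(t)}_\calK}{t+s}\,dt\,ds.
\]
The classical (scalar) Hilbert integral inequality, applied to the nonnegative scalar functions $u(s)=\norm{f(s)}_\calK$ and $v(t)=\norm{g(t)}_\calK$ (whose $L^2(\R_+)$-norms coincide with $\norm{f}$ and $\norm{g}$), produces the bound $\pi C\norm{f}\norm{g}$, and by density one concludes $\norm{K}\leq \pi C$.

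For part (ii), the plan is a two-step approximation argument. Given $\eta>0$, set $\varepsilon=\eta/(2\pi)$ and use the $o(1/t)$ hypothesis to choose $0<a<b<\infty$ with $\norm{K(t)}\leq\varepsilon/t$ for $t\notin[a,b]$. Split $K(t)=K'(t)+K''(t)$, where $K''(t)=\chi_{[a,b]}(t)K(t)$; part (i) immediately gives $\norm{K'}_{\rm op}\leq\pi\varepsilon=\eta/2$ for the associated Hankel operator. Next, since $u\mapsto K''(u)$ is norm-continuous on the compact interval $[a,b]$ into the Banach space of compact operators on $\calK$, its image is totally bounded, and (using norm density of finite-rank operators in the compacts) I would build a piecewise-constant approximation $K''_N(u)=\sum_j F_j\chi_{I_j}(u)$, with $F_j$ finite rank and $\{I_j\}$ a partition of $[a,b]$, satisfying $\sup_u\norm{K''(u)-K''_N(u)}\leq\varepsilon/b$. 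Since $1/b\leq 1/u$ on $[a,b]$, this upgrades to the pointwise bound $\norm{K''(u)-K''_N(u)}\leq\varepsilon/u$ valid for all $u>0$ (with the difference vanishing outside $[a,b]$), and a second application of part (i) yields $\norm{K''-K''_N}_{\rm op}\leq\pi\varepsilon=\eta/2$.

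It remains to observe that the Hankel operator $K''_N$ is itself compact: writing each $F_j$ as a finite sum $\sum_k e_{j,k}(h_{j,k},\,\cdot\,)_\calK$ expresses $K''_N$ as a finite combination of compositions of (a) a bounded projection $f\mapsto(h_{j,k},f(\cdot))_\calK$ into scalar $L^2(\R_+)$, (b) the scalar Hankel integral operator with $L^2(\R_+^2)$ kernel $\chi_{I_j}(t+s)$ (hence Hilbert--Schmidt, in particular compact), and (c) the bounded embedding $\varphi\mapsto e_{j,k}\varphi(\cdot)$ back into $L^2(\R_+,\calK)$. Combining, $\norm{K-K''_N}_{\rm op}\leq\eta$ with $K''_N$ compact, and since $\eta>0$ was arbitrary, $K$ is compact. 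The main subtlety is the step-function approximation in part (ii), which requires \emph{both} the norm continuity of $K(\cdot)$ and the compactness of each fiber $K(u)$ to obtain a uniform finite-rank-valued approximation on $[a,b]$; everything else is routine bookkeeping using part (i).
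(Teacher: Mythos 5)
Your proof is correct and follows essentially the same route as the paper: part (i) via pointwise Cauchy--Schwarz in $\calK$ plus the norm-$\pi$ Carleman/Hilbert inequality, and part (ii) via truncation to a compact $t$-interval controlled by (i), followed by uniform piecewise-constant approximation of the norm-continuous, compact-operator-valued kernel. The only (immaterial) difference is that you reduce the approximants to finite rank and factor through the scalar Hilbert--Schmidt Hankel kernel $\chi_{I_j}(t+s)$, whereas the paper simply invokes that a compact scalar Hankel operator tensored with a compact fiber is compact.
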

\begin{proof}
Since the Carleman operator on $L^2(\R_+)$ with the kernel $(t+s)^{-1}$ 
is bounded with the norm $\pi$, we have 
$$
\abs{(Kf,g)_{L^2(\R_+,\calK)}}\leq C \int_0^\infty \int_0^\infty 
\frac{\norm{f(t)}_{\calK}\norm{g(s)}_{\calK}}{t+s}dt\,ds
\leq
\pi C \norm{f}_{L^2(\R_+,\calK)} \norm{g}_{L^2(\R_+,\calK)},
$$
which proves (i). To prove (ii), we need to approximate $K$ by compact
operators. Let $K_n(t)=K(t)\chi_n(t)$, where $\chi_n$ is the characteristic
function of the interval $(1/n,n)$ and let $K_n$ be the corresponding
operator in $L^2(\R_+,\calK)$. 
By (i), $\norm{K-K_n}_{L^2(\R_+,\calK)}\to0$ as $n\to\infty$.
Thus, it remains to show that each $K_n$ is compact.

For each $n$, the Hankel type integral operator with the kernel 
$\chi_n(t+s)/(t+s)$ in $L^2(\R_+)$ is compact (in fact, Hilbert-Schmidt). 
It follows that $K_n$ is compact if $K(t)$ is independent of $t$. 
Now the result follows from the fact that $K(t)$ can be uniformly approximated
by piecewise constant functions on the interval $(1/n,n)$.   
\end{proof}

Important model operators in our construction below are 
the Hankel  integral operators in 
$L^2(\R_+, \calK)$ of the type \eqref{b13} with $K(t)$ given by 
\begin{equation}
\frac{1-e^{-t}}{t}F_0'(0)
\quad\text{ and }\quad
\frac{1-e^{-t}}{t}F'(0). 
\label{b14}
\end{equation}
For this reason, we need to discuss the integral Hankel operator
$\Gamma$ in $L^2(\R_+)$ with the integral kernel 
$\Gamma(t,s)=\frac{1-e^{-t-s}}{t+s}$. 
One can show (see e.g. \cite[Lemma~7]{Push})  that 
\begin{equation}
\sigma(\Gamma)=[0,\pi].
\label{b15}
\end{equation}
In fact, the spectrum of $\Gamma$ is purely absolutely continuous, 
but we will not need this fact. 
Identifying $L^2(\R_+,\calK)$ with $L^2(\R_+)\otimes\calK$, 
we denote the operators \eqref{b14} by $\Gamma\otimes F_0'(0)$
and $\Gamma\otimes F'(0)$. 

\subsection{The operators $L$ and $L_0$ }\label{sec.b5}

The crucial point of our proof of Theorem~\ref{thm.b3} is the  representation 
\begin{equation}
E(-1,0)E_0(0,1)=-LJL^*_0
\label{b16}
\end{equation}
in terms of some auxiliary operators $L_0$ and $L$
which we proceed to define. 
These operators act from $L^2(\R_+,\calK)$ 
to $\calH$. On the dense set $L^2(\R_+,\calK)\cap L^1(\R_+,\calK)$
we define $L_0$, $L$ by 
\begin{align}
L_0 f=\int_0^\infty e^{-tH_0}(GE_0(0,1))^*f(t)dt, 
\label{b17}
\\
L f=\int_0^\infty e^{tH}(GE(-1,0))^*f(t)dt.
\label{b18}
\end{align}

\begin{lemma}\label{lma.b6}
Assume \eqref{a3} and suppose that the derivatives
$F_0'(0)$, $F'(0)$ exist in the operator norm. 
Then:

(i) The operators $L_0$ and $L$ defined by \eqref{b17} and \eqref{b18}
extend to bounded operators from $L^2(\R_+,\calK)$ to $\calH$. 

(ii)
The differences 
\begin{equation}
L_0^*L_0-\Gamma\otimes F_0'(0),
\quad
L^* L-\Gamma\otimes F'(0)
\label{b19}
\end{equation}
are compact operators.

(iii) The identity \eqref{b16} holds true. 
\end{lemma}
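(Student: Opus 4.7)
The plan is first to observe that (ii) implies (i): since the Hankel model $\Gamma\otimes F_0'(0)$ is bounded (with norm $\pi\|F_0'(0)\|$), a compact perturbation makes $L_0^*L_0$ bounded, and the identity $\|L_0\|^2=\|L_0^*L_0\|$ then gives boundedness of $L_0$. The same reasoning handles $L$. So the substance is in (ii) and (iii).

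For (ii), I would begin by computing the adjoints by duality to obtain
$$
(L_0^*h)(t) = GE_0(0,1)e^{-tH_0}h,\qquad (L^*g)(t) = GE(-1,0)e^{tH}g,
$$
and then observe that the composition $L_0^*L_0$ is precisely the operator-valued Hankel integral operator \eqref{b13} with kernel
$$
K_0(s+t) = GE_0(0,1)e^{-(s+t)H_0}(GE_0(0,1))^* = \int_0^1 e^{-(s+t)\mu}\,dF_0(\mu).
$$
Since $\Gamma\otimes F_0'(0)$ has Hankel kernel $F_0'(0)\int_0^1 e^{-(s+t)\mu}\,d\mu$, the difference is the Hankel operator with kernel $D_0(t) = \int_0^1 e^{-t\mu}\,d\rho(\mu)$, where $\rho(\mu) := F_0(\mu)-F_0(0)-\mu F_0'(0)$. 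Each $D_0(t)$ is compact, since $K_0(t)$ factors through the compact $GE_0(0,1)$ and $F_0'(0)$ is a norm-limit of compact finite differences. Proposition~\ref{lma.b5}(ii) then reduces compactness of $L_0^*L_0-\Gamma\otimes F_0'(0)$ to showing $\|D_0(t)\|=o(1/t)$ as $t\to 0^+$ (easy, since $D_0(t)=O(1)$ there) and as $t\to\infty$. The latter is the main technical obstacle: the norm-existence of $F_0'(0)$ is equivalent to $\|\rho(\mu)\|=o(\mu)$ as $\mu\to 0^+$, and an integration by parts using $\rho(0)=0$ yields
$$
D_0(t) = e^{-t}\rho(1) + t\int_0^1 e^{-t\mu}\rho(\mu)\,d\mu;
$$
splitting the integral at a small $\varepsilon>0$ and using $\|\rho(\mu)\|\leq\eta(\varepsilon)\mu$ on $(0,\varepsilon)$ with $\eta(\varepsilon)\to 0$ gives $\limsup_{t\to\infty} t\|D_0(t)\|\leq\eta(\varepsilon)$, hence $0$. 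The corresponding assertion for $L^*L$ follows by the same argument after the change of variable $\mu=-\lambda$ on $(-1,0)$, which recasts the Stieltjes integral with $\mu\mapsto F(0)-F(-\mu)$ in place of $F_0$; this auxiliary function vanishes at $0$ with derivative $F'(0)$ there, and $\|e^{tH}E(-1,0)\|\leq 1$ keeps all integrals defining $L$ convergent.

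For (iii), I pair $LJL_0^*h$ with an arbitrary $g\in\calH$ and combine the adjoint formulas to obtain
$$
(LJL_0^*h,g) = \int_0^\infty (JGE_0(0,1)e^{-tH_0}h,\,GE(-1,0)e^{tH}g)_\calK\,dt.
$$
Setting $\phi(t):=(E_0(0,1)e^{-tH_0}h,\,E(-1,0)e^{tH}g)_\calH$ and applying the form identity \eqref{a4a} with $f_0:=E_0(0,1)e^{-tH_0}h\in\Dom H_0$ and $f:=E(-1,0)e^{tH}g\in\Dom H$, the two $H_0$-contributions to $\phi'(t)$ cancel and leave exactly the above integrand. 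Hence the integral equals $\lim_{T\to\infty}\phi(T)-\phi(0)$; since the spectral measure of $E_0(0,1)h$ is supported in $(0,1)$, dominated convergence gives $\|E_0(0,1)e^{-TH_0}h\|\to 0$, so $\phi(T)\to 0$, while $\phi(0)=(E(-1,0)E_0(0,1)h,g)$. This yields \eqref{b16}. The only substantive difficulty in the proof is the $o(1/t)$ decay at infinity in (ii); the remainder is bookkeeping once \eqref{a4a} is applied with care.
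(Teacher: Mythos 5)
Your proposal is correct and follows essentially the same route as the paper: the identification of $L_0^*L_0$ and $L^*L$ with operator-valued Hankel kernels, the integration by parts reducing the $o(1/t)$ decay at infinity to $\norm{F_0(\mu)-F_0(0)-F_0'(0)\mu}=o(\mu)$, an appeal to Proposition~\ref{lma.b5}, and the differentiation of $(E_0(0,1)e^{-tH_0}h,E(-1,0)e^{tH}g)$ via \eqref{a4a} for \eqref{b16}. The only (harmless) reorganisation is that you deduce (i) from the boundedness of the model kernel plus the compact remainder, whereas the paper proves the bound $\norm{K(t)}\leq C/t$ for the full kernel directly; both rest on the same integration-by-parts identity.
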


\begin{proof}
(i) Let us prove that $L_0$ is bounded; the boundedness of $L$
is proven in the same way. 
For $f\in L^2(\R_+,\calK)\cap L^1(\R_+;\calK)$ we have
\[
\norm{L_0 f}^2
=
\int_0^\infty \int_0^\infty
(GE_0(0,1)e^{-(t+s)H_0}(GE_0(0,1))^*f(t),f(s))_{\calK}dt\,ds,
\]
and so the above expression is a quadratic form of the operator of the type 
\eqref{b13} with the kernel 
$K(t)=GE_0(0,1)e^{-tH_0}(GE_0(0,1))^*$.
By Proposition~\ref{lma.b5}, it suffices to prove the bound 
$\norm{K(t)}_\calK \leq C/t$, $t>0$.
Let $f\in\calH$ and $\rho(\lambda)=(E_0(-\infty,\lambda)f,f)$. 
Integrating by parts, one obtains
$$
\int_0^1 e^{-t\lambda}d\rho(\lambda)
=
e^{-t}\int_0^1d\rho(\lambda)
+
t\int_0^1 d\mu\,  e^{-t\mu}\int_0^\mu d\rho(\lambda).
$$
It follows that
$$
e^{-tH_0}E_0(0,1)
=
e^{-t}E_0(0,1)+t\int_0^1 e^{-t\mu}E_0(0,\mu)d\mu.
$$
Using this expression, the relation \eqref{a5a} and the fact that $GE_0(\{0\})=0$, we get
\begin{equation}
K(t)=e^{-t}(F_0(1)-F_0(0))+t\int_0^1 e^{-t\mu} (F_0(\mu)-F_0(0))d\mu.
\label{b24}
\end{equation}
By our assumption on the differentiability of $F_0$, we have
$$
\norm{F_0(\mu)-F_0(0)} \leq C\abs{\mu}
\text{ for $\abs{\mu}\leq1$}.
$$
Using this, we obtain:
\begin{multline*}
\norm{K(t)} \leq
e^{-t}\norm{F_0(1)-F_0(0)} 
+
t\int_0^1 e^{-t\mu}\norm{F_0(\mu)-F_0(0)} d\mu
\\
\leq
C e^{-t}+C t\int_0^1 e^{-t\mu}\mu d\mu
=
C(1-e^{-t})/t
\leq
C/t, \quad t>0,
\end{multline*}
as required. 

(ii) Let us consider the first of the differences \eqref{b19}; the second one
is considered in the same way. 
By the same reasoning as above, $L_0^*L_0-\Gamma\otimes F_0'(0)$
is the operator of the type \eqref{b13} with
$$
K(t)
=
GE_0(0,1)e^{-tH_0}(GE_0(0,1))^*
-
F_0'(0)
(1-e^{-t})/t.
$$
By \eqref{a3}, $F_0(\lambda)$ is compact for all $\lambda$. 
Since the derivative $F_0'(0)$ exists in the operator norm, 
the operator $F_0'(0)$ is also compact. 
Thus, $K(t)$ is compact for all $t>0$. 
By Proposition~\ref{lma.b5}(ii), it suffices to prove
that $\norm{K(t)}=o(1/t)$ as $t\to0$ and $t\to\infty$. 
For $t\to0$, the statement is obvious. Consider the 
limit $t\to\infty$. By the same calculation as in part (i) of 
the proof (see \eqref{b24}), we have
\begin{multline*}
K(t)=e^{-t}(F_0(1)-F_0(0))
+
t\int_0^1 e^{-t\mu}(F_0(\mu)-F_0(0))d\mu
\\
-F_0'(0)t \int_0^1 e^{-t\mu}\mu\  d\mu-F_0'(0)e^{-t}.
\end{multline*}
It follows that
\begin{equation}
\norm{K(t)} \leq e^{-t}\norm{F_0(1)-F_0(0)-F_0'(0)}
+
t\int_0^1 e^{-t\mu}\norm{F_0(\mu)-F_0(0)-F_0'(0)\mu} d\mu.
\label{b20}
\end{equation}
By our assumption, 
\begin{equation}
\norm{F_0(\mu)-F_0(0)-F_0'(0)\mu}=o(\mu) 
\text{ as $\mu\to0$.}
\label{b21}
\end{equation}
Using \eqref{b20} and \eqref{b21}, it is easy to see that
$\norm{K(t)}=o(1/t)$ as $t\to\infty$. 

(iii)
Let $f,f_0\in\calH$. 
Using \eqref{a4a}, we obtain
\begin{multline*}
\frac{d}{dt}(E_0(0,1)e^{-tH_0}f_0,E(-1,0)e^{tH}f)
\\
=
(E_0(0,1)e^{-tH_0}f_0,HE(-1,0)e^{tH}f)
-
(H_0E_0(0,1)e^{-tH_0}f_0,E(-1,0)e^{tH}f)
\\
=(JG E_0(0,1)e^{-tH_0}f_0,GE(-1,0)e^{tH}f)_\calK.
\end{multline*}
Using this and the easily verifiable relations
$$
\norm{E_0(0,1)e^{-tH_0}f_0}_\calH\to0,
\quad
\norm{E_0(-1,0)e^{tH}f}_\calH\to0
\quad \text{ as $t\to\infty$,}
$$
we get
\begin{multline*}
(JL_0^*f_0,L^*f)_{L^2(\R_+,\calK)}
=
\int_0^\infty (JG E_0(0,1)e^{-tH_0}f_0,GE(-1,0)e^{tH}f)_\calK dt
\\
=
\int_0^\infty \frac{d}{dt}(E_0(0,1)e^{-tH_0}f_0,E(-1,0)e^{tH}f) dt
=
-(E_0(0,1)f_0,E(-1,0)f),
\end{multline*}
which proves \eqref{b16}.
\end{proof}

\subsection{Proof of Theorem~\ref{thm.b3} }\label{sec.b6}
We will prove \eqref{b8}; the relation \eqref{b7} is 
proven in the same manner.

1. 
First we introduce some notation. 
For  bounded self-adjoint operators $M$ and $N$ we shall write 
\[
M\approx N \text{ if } M\mid_{(\Ker M)^\perp}
\text{ is unitarily equivalent to }N\mid_{(\Ker N)^\perp}.
\]
It is well known that $M^*M\approx MM^*$ for any bounded operator $M$;
below we use this fact.

2. 
Using Lemma~\ref{lma.b6} we get, for some compact operators
$X_0$ and $X$: 
$$
E_0(0,1)E(-1,0)E_0(0,1)
=
L_0JL^*LJL_0^*
=
L_0(\Gamma\otimes JF'(0)J)L_0^*+X,
$$
\begin{multline*}
L_0(\Gamma\otimes JF'(0)J)L_0^*
=
L_0(\Gamma^{1/2}\otimes JF'(0)^{1/2})(\Gamma^{1/2}\otimes F'(0)^{1/2}J)L_0^*
\\
\approx
(\Gamma^{1/2}\otimes F'(0)^{1/2}J)L_0^*L_0(\Gamma^{1/2}\otimes JF'(0)^{1/2})
\\
=
(\Gamma^{1/2}\otimes F'(0)^{1/2}J)(\Gamma\otimes F_0'(0))(\Gamma^{1/2}\otimes JF'(0)^{1/2})
+X_0
\\
=
\Gamma^2\otimes(F'(0)^{1/2}JF_0'(0)JF'(0)^{1/2})+X_0.
\end{multline*}
Thus, by Weyl's theorem, we obtain
$$
\sigma_\ess(E_0(0,1)E(-1,0)E_0(0,1))
=
\sigma_\ess(\Gamma^2\otimes Q), 
\quad 
Q=F'(0)^{1/2}JF_0'(0)JF'(0)^{1/2}.
$$

3. 
The operator $Q$ above is compact, selfadjoint and $Q\geq0$. 
Let $Q=\sum_{n=1}^\infty \lambda_n (\cdot, f_n)f_n$ be the 
spectral decomposition of $Q$, where $\lambda_1\geq \lambda_2\geq\cdots$ 
are the eigenvalues of $Q$. 
Then 
$$
\Gamma^2\otimes Q
=
\sum_{n=1}^\infty \lambda_n \Gamma^2\otimes(\cdot, f_n)f_n
$$ 
is an orthogonal sum decomposition of $\Gamma^2\otimes Q$, and therefore
$$
\sigma_\ess(\Gamma^2\otimes Q)
=
\cup_{n=1}^\infty 
\sigma_\ess(\lambda_n \Gamma^2\otimes (\cdot, f_n)f_n).
$$
Taking into account \eqref{b15} and recalling that 
$\lambda_1=\norm{Q}$, we obtain 
\begin{multline*}
\sigma_\ess(\Gamma^2\otimes Q)
=
\cup_{n=1}^\infty [0,\lambda_n\pi^2]=[0,\pi^2 \norm{Q}]
=
[0,\pi^2\norm{F'(0)^{1/2}JF_0'(0)JF'(0)^{1/2}}]
\\
=
[0,\pi^2\norm{F_0'(0)^{1/2}JF'(0)^{1/2}}^2]
=
[0,\alpha(0)^2],
\end{multline*}
as required. \qed

\section{Proofs of Theorems~\ref{cr.a2}, \ref{cr.a3}, \ref{cr.a4} and \ref{th5}}\label{sec.c}

\subsection{Existence of $F_0'$, $F'$ and $T_0$, $T$ }\label{sec.c1}
Here we recall various statements concerning the existence
of the derivatives $F_0'(\lambda)$, $F'(\lambda)$ and the limits
$T_0(\lambda+i0)$, $T(\lambda+i0)$ under the assumptions
of Theorems~\ref{cr.a2}, \ref{cr.a3}, \ref{cr.a4}. 
All of these statements are essentially well known. 
If the limits $T_0(\lambda+i0)$, $T(\lambda+i0)$ exist, we denote
$$
B_0(\lambda)=\Im T_0(\lambda+i0), 
\quad
B(\lambda)=\Im T(\lambda+i0).
$$
We first note that if the derivatives $F_0'(\lambda)$ and $F'(\lambda)$
and the limits $T_0(\lambda+i0)$, $T(\lambda+i0)$ exist at some point
$\lambda$, then 
\begin{equation}
\pi F_0'(\lambda)=B_0(\lambda),\quad 
\pi F'(\lambda)=B(\lambda). 
\label{c1}
\end{equation}
Indeed, this follows from the spectral theorem and the following 
well known fact  
(see e.g. \cite[Theorem~11.22]{Rudin}): if $\mu$ is a measure on $\R$
and the derivative  
$\frac{d}{d\lambda}\mu(-\infty,\lambda)$ exists, then 
$$
\pi\frac{d}{d\lambda}\mu(-\infty,\lambda)
=
\lim_{\varepsilon\to+0}\Im \int_\R \frac{d\mu(t)}{t-\lambda-i\varepsilon}.
$$

\begin{lemma}\label{prp.c1}
Assume \eqref{a3} and suppose that \eqref{a11} holds true 
for some bounded open interval $\Delta\subset\R$. Then for all 
$\lambda\in\Delta$ the derivatives $F_0'(\lambda)$, $F'(\lambda)$
exist in the operator norm.
\end{lemma}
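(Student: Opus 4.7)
The plan is to derive the existence of $F_0'(\lambda)$ and $F'(\lambda)$ in operator norm directly from a Stone-formula computation, using the smoothness hypothesis \eqref{a11} to promote the usual strong-operator Stone limit to an operator-norm limit. As a byproduct I expect to recover the identity $\pi F_0'(\lambda)=B_0(\lambda)$, $\pi F'(\lambda)=B(\lambda)$ already forecast by \eqref{c1}.

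First I would verify that $GE_0(\{\mu\})G^*=0$ and $GE(\{\mu\})G^*=0$ for every $\mu\in\Delta$. Since $-i\varepsilon R_0(\mu+i\varepsilon)\to E_0(\{\mu\})$ strongly as $\varepsilon\to 0+$, the sandwiched quantity $-i\varepsilon T_0(\mu+i\varepsilon)$ converges weakly to $GE_0(\{\mu\})G^*$. On the other hand, by \eqref{a11} the operator $T_0(\mu+i\varepsilon)$ is bounded uniformly in $\varepsilon$, so $-i\varepsilon T_0(\mu+i\varepsilon)\to 0$ in operator norm. The analogous reasoning applied to $T$ disposes of $GE(\{\mu\})G^*$.

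Next, fix $\lambda_1,\lambda_2\in\Delta$ and apply the Stone formula sandwiched between $G$ and $G^*$. A Fubini interchange combined with the elementary pointwise limit
\[
\frac{1}{\pi}\int_{\lambda_1}^{\lambda_2}\frac{\varepsilon\,d\mu}{(t-\mu)^2+\varepsilon^2} \longrightarrow \chi_{(\lambda_1,\lambda_2)}(t)+\tfrac12\chi_{\{\lambda_1,\lambda_2\}}(t)
\]
as $\varepsilon\to 0+$, together with the vanishing of $GE_0(\{\lambda_i\})G^*$ from the previous step, yields
\[
GE_0[\lambda_1,\lambda_2)G^* \;=\; \lim_{\varepsilon\to 0+}\frac{1}{\pi}\int_{\lambda_1}^{\lambda_2}\Im T_0(\mu+i\varepsilon)\,d\mu
\]
in the strong operator topology. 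However, \eqref{a11} ensures that $\Im T_0(\mu+i\varepsilon)\to B_0(\mu)$ uniformly in $\mu\in[\lambda_1,\lambda_2]$ and that $B_0$ is continuous in operator norm on $\Delta$; so the right-hand side in fact converges in operator norm, giving the equality $GE_0[\lambda_1,\lambda_2)G^*=\pi^{-1}\int_{\lambda_1}^{\lambda_2}B_0(\mu)\,d\mu$ in operator norm.

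Combining this with \eqref{a5a} and the observation that $F_0(\lambda)=GE_0[\nu,\lambda)G^*$ gives $F_0(\lambda+h)-F_0(\lambda)=\pi^{-1}\int_{\lambda}^{\lambda+h}B_0(\mu)\,d\mu$ in operator norm; dividing by $h$ and invoking operator-norm continuity of $B_0$ on $\Delta$ yields $F_0'(\lambda)=\pi^{-1}B_0(\lambda)$ for every $\lambda\in\Delta$. The argument for $F'(\lambda)$ is identical, with $H_0,E_0,T_0,B_0$ replaced by $H,E,T,B$ and using the $T$-half of \eqref{a11}. The main obstacle is precisely the one non-routine step above---upgrading the Stone-formula convergence from strong-operator to operator norm---and that upgrade is exactly what the uniform operator-norm continuity built into \eqref{a11} is designed to furnish.
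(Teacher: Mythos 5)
Your proof is correct and follows essentially the same route as the paper: both arguments first use the uniform bound on $T_0(\mu+i\varepsilon)$ supplied by \eqref{a11} to show $GE_0(\{\mu\})=0$ for $\mu\in\Delta$ (the paper via the inequality $0\le GE_0(\{\mu\})(GE_0(\{\mu\}))^*\le\varepsilon\,\Im T_0(\mu+i\varepsilon)$, you via the vanishing norm limit of $-i\varepsilon T_0(\mu+i\varepsilon)$), and then apply Stone's formula together with the operator-norm continuity of $B_0(\lambda)$ on $\Delta$ to get $F_0(b)-F_0(a)=\pi^{-1}\int_a^b B_0(\mu)\,d\mu$ and hence $F_0'(\lambda)=\pi^{-1}B_0(\lambda)$, with the identical treatment of $F'$. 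The differences are purely cosmetic.
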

\begin{proof}
From the obvious operator inequality 
$$
0\leq E_0(\{\lambda\})\leq \frac{\varepsilon^2}{(H_0-\lambda)^2+\varepsilon^2},
\quad \varepsilon>0,
$$
we get 
$$
0\leq GE_0(\{\lambda\})(GE_0(\{\lambda\}))^*
\leq
\varepsilon \Im T_0(\lambda+i\varepsilon), 
\quad \varepsilon>0.
$$
This implies that $GE_0(\{\lambda\})=0$ for all $\lambda\in\Delta$. 
Using this, Stone's formula (see e.g. \cite[Theorem~VII.13]{RS1}) yields 
$$
((F_0(b)-F_0(a))f,f)
=
\lim_{\varepsilon\to+0}\frac1\pi\int_a^b
\Im(T_0(\lambda+i\varepsilon)f,f)d\lambda
=
\frac1\pi\int_a^b (B_0(\lambda)f,f)d\lambda
$$
for any interval $[a,b]\subset\Delta$.
From here and the continuity of $B_0(\lambda)$
we get the statement concerning $F_0'(\lambda)$. 
The case of $F'(\lambda)$ is considered in the same way. 
\end{proof}

\begin{lemma}\label{prp.c2}
(i) Assume that $G$ is a Hilbert-Schmidt operator. 
Then for a.e. $\lambda\in\R$, the derivatives $F_0'(\lambda)$,
$F'(\lambda)$ and the limits $T_0(\lambda+i0)$, $T(\lambda+i0)$ 
exist in the operator norm. 

(ii) Under the assumptions of Theorem~\ref{cr.a4}, for a.e.
$\lambda\in\R$ the derivatives $F_0'(\lambda)$, $F'(\lambda)$
and the limits $T_0(\lambda+i0)$, $T(\lambda+i0)$ exist in the 
operator norm.
\end{lemma}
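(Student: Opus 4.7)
The plan is to reduce both parts to standard trace class scattering theory (see, e.g., \cite[Section~6.1]{Yafaev}) for the existence of derivatives of the spectral measure and boundary values of the sandwiched resolvent, with a short analytic Fredholm step to transfer the information from $H_0$ to $H$.

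For part (i), assume $G\in\mathbf S_2$. Then $F_0(\lambda)-F_0(\nu)=GE_0[\nu,\lambda)G^*$ is a monotone, uniformly bounded, trace-class-valued function of $\lambda$, and the operator-valued Lebesgue differentiation theorem for trace-class-valued monotone functions gives the existence of $F_0'(\lambda)$ in trace norm (hence in operator norm) for a.e.\ $\lambda$. In parallel, $T_0(z)=GR_0(z)G^*$ is the Cauchy--Stieltjes transform of the trace-class-valued measure $d(GE_0(\cdot)G^*)$, and the Hilbert--Schmidt-valued Plemelj--Privalov theorem (reduced to the scalar case by pairing with a countable dense family of vectors and estimating in HS norm) yields $T_0(\lambda+i0)$ in HS norm for a.e.\ $\lambda$; the two exceptional sets can be arranged to coincide. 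To pass to $H$ and $T$, I apply the identity $T(z)=(I+T_0(z)J)^{-1}T_0(z)$ from \eqref{a21}: the operator $T_0(\lambda+i0)J$ is compact at every $\lambda$ where it exists, and an analytic Fredholm argument applied to $z\mapsto I+T_0(z)J$ in $\mathbb{C}^+$ (boundedly invertible for $\Im z>0$) shows that the set of $\lambda$ at which $I+T_0(\lambda+i0)J$ fails to be boundedly invertible has Lebesgue measure zero. Hence $T(\lambda+i0)$ exists in operator norm a.e., and the same monotone-differentiation theorem applied to $F$ (trace-class-valued on bounded intervals by \eqref{a20}) yields $F'(\lambda)$ a.e.

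For part (ii), I reduce to (i) via the substitution $\tilde G=G(\abs{H_0}+I)^{-m}\in\mathbf S_2$. Since $H_0$ is semi-bounded, we may take $\nu<\inf\sigma(H_0)$; then, on any bounded interval $[\nu,\lambda_0]$, the function $\psi_\lambda(x)=(\abs{x}+1)^{2m}\chi_{[\nu,\lambda)}(x)$ is uniformly bounded and
\[
F_0(\lambda)=\tilde G\,\psi_\lambda(H_0)\,\tilde G^*,\qquad \lambda\in[\nu,\lambda_0].
\]
This puts $F_0$ locally in the form ``HS operator times bounded function of $H_0$ times HS operator''; the same structure underlies a local expression for $T_0(z)$ near $\lambda\in(\nu,\lambda_0)$, with the contribution from $E_0(\R\setminus[\nu,\lambda_0))$ being analytic across such $\lambda$. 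Part (i) applied with $\tilde G$ in place of $G$ therefore delivers $F_0'(\lambda)$ and $T_0(\lambda+i0)$ for a.e.\ $\lambda$. By \eqref{a4b} together with a form comparison between $H$ and $H_0$, the operator $G(\abs{H}+I)^{-m}$ is also HS, and the same reasoning yields $F'(\lambda)$ and $T(\lambda+i0)$ for a.e.\ $\lambda$. The main obstacle in this program is the analytic Fredholm step in (i): its rigorous execution requires careful control of the boundary behavior of $(I+T_0(z)J)^{-1}$, the cleanest route being through Hardy-space techniques for operator-valued analytic functions on $\mathbb{C}^+$.
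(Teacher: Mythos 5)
Your part (ii) is essentially the paper's argument: cut off to a bounded spectral window on which $GE_0(\Delta_1)$ becomes Hilbert--Schmidt (the paper writes $G_1=GE_0(-R,R)$ rather than your weight $\psi_\lambda$, but it is the same reduction), observe that the contribution of the complementary spectral subspace to $T_0(z)$ is analytic across the window, and transfer to $H$ via $G(\abs{H}+I)^{-m}\in\mathbf S_2$, which the paper, like you, obtains by the argument of \cite[Theorem~XI.30]{RS3}. The genuine problem is in your part (i), specifically in how you produce $T(\lambda+i0)$. You propose to get it from $T(z)=(I+T_0(z)J)^{-1}T_0(z)$ (formula \eqref{a21}) by an ``analytic Fredholm argument'' showing that $I+T_0(\lambda+i0)J$ is boundedly invertible for a.e.\ $\lambda$. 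As you yourself note, this step is not routine: the analytic Fredholm theorem operates inside the open half-plane, the boundary function $\lambda\mapsto T_0(\lambda+i0)$ is not analytic, and the assertion that the exceptional boundary set has Lebesgue measure zero is a separate, nontrivial theorem about boundary values of operator-valued Nevanlinna-class functions. Left as stated, this is a gap.

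It is also an avoidable one. The key fact of trace class theory that the paper simply cites for part (i) (see \cite[Section~6.1]{Yafaev}) is symmetric in $H_0$ and $H$: for \emph{any} self-adjoint operator $A$ and any Hilbert--Schmidt $G$, the function $\lambda\mapsto GE_A(-\infty,\lambda)G^*$ is differentiable in norm for a.e.\ $\lambda$ and $G(A-\lambda-i\varepsilon)^{-1}G^*$ has a norm limit as $\varepsilon\to+0$ for a.e.\ $\lambda$, where $E_A$ denotes the spectral measure of $A$. Applying this directly with $A=H$ yields $F'(\lambda)$ and $T(\lambda+i0)$ a.e.\ with no need to invert $I+T_0(\lambda+i0)J$ on the boundary: the hypothesis is on $G$ alone, and $H$ is just another self-adjoint operator. (The same remark applies inside your part (ii): once $G(\abs{H}+I)^{-m}\in\mathbf S_2$ is known, one repeats the cutoff argument for $H$ verbatim, exactly as the paper does.) If you do want the a.e.\ boundary invertibility of $I+T_0(\lambda+i0)J$ --- it is needed elsewhere, e.g.\ for \eqref{c7a} --- it is true under these hypotheses, but you should cite it as a known result rather than derive it from ``analytic Fredholm.''
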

\begin{proof}
(i) is one of the key facts of the trace class scattering 
theory,  see e.g. \cite[Section~6.1]{Yafaev}.

(ii) 
First consider $F_0'$ and $T_0$.
Let us apply a standard argument: let $\Delta_1=(-R,R)$, 
$\Delta_2=\R\setminus\Delta_1$ and write
$G_j=GE_0(\Delta_j)$, $j=1,2$. 
Then $G_1\in\mathbf S_2$. 
Thus, by part (i) of the lemma, the derivative
$$
\frac{d}{d\lambda}F_0(\lambda)
=
\frac{d}{d\lambda}G_1E_0(-\infty,\lambda)G_1^*, 
\quad \lambda\in\Delta_1,
$$
exists in the operator norm.
Let us consider $T_0(z)$; we have
\begin{equation}
T_0(z)
=
G_1R_0(z)G_1^*
+
G_2(G_2R_0(\overline z))^*.
\label{c17}
\end{equation}
By part (i) of the lemma, the first term in the r.h.s. of \eqref{c17} has a limit
as $z\to\lambda+i0$ for a.e. $\lambda\in\Delta_1$.
Since $R_0(z)E_0(\Delta_2)$ is analytic in $z\in\C\setminus\overline{\Delta_2}$, 
the second term in the r.h.s. of \eqref{c17} has a limit as $z\to\lambda+i0$ 
for all $\lambda\in\Delta_1$. 
It follows that $T_0(z)$ has boundary values 
as $z\to\lambda+i0$ for a.e. $\lambda\in\Delta_1$.
Since $R$ in the definition of $\Delta_1$ can be taken arbitrary large, 
this gives the desired 
statement for a.e. $\lambda\in\R$. 

Consider $F'$ and $T$. First, exactly as in the proof of 
\cite[Theorem~XI.30]{RS3}, using \eqref{a3} and \eqref{a13}, 
one shows that 
\begin{equation}
G(\abs{H}+I)^{-m}\in\mathbf S_2.
\label{c2}
\end{equation}
After this, the proof follows the same argument
as above. 
\end{proof}

\begin{lemma}\label{lma.c5}
Assume \eqref{a3} and let $\Delta\subset\R$ 
be a bounded interval. 
Suppose that for a.e. $\lambda\in\Delta$, the 
derivative $F_0'(\lambda)$ exists in the operator 
norm. Then for a.e. $\lambda\in\Delta\setminus \hat\sigma_\ac(H_0)$, 
one has $F_0'(\lambda)=0$. 
\end{lemma}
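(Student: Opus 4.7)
The plan is to reduce the operator-valued claim to a scalar statement about spectral measures of $H_0$, then lift it back to the operator via a countable density argument and the positivity of $F_0'(\lambda)$.

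First I would represent $(F_0(\lambda)f,f)_\calK$ as the distribution function of a positive scalar measure associated with $H_0$. Setting $Z=G(\abs{H_0}+I)^{-1/2}$ (compact by \eqref{a3}) and $h_f:=Z^*f\in\calH$ for each $f\in\calK$, the functional calculus identifies $(\abs{H_0}+I)^{1/2}E_0[\nu,\lambda)$ with the bounded operator $\int_{[\nu,\lambda)}(\abs{x}+1)^{1/2}\,dE_0(x)$, and a short calculation yields
\[
(F_0(\lambda)f,f)_\calK
=\Norm{(GE_0[\nu,\lambda))^*f}^2
=\int_{[\nu,\lambda)}(\abs{x}+1)\,d\sigma_{h_f}(x)=:\mu_f([\nu,\lambda)),
\]
where $\sigma_{h_f}(A)=(E_0(A)h_f,h_f)$ is the usual scalar spectral measure of $H_0$ for the vector $h_f$.

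Second, I would use the Lebesgue decompositions $\sigma_{h_f}=\sigma_{h_f}^{(\ac)}+\sigma_{h_f}^{(s)}$ and $\mu_f=\mu_f^{(\ac)}+\mu_f^{(s)}$, the latter inherited from the former via the bounded positive weight $\abs{x}+1$. By the very definition of the core $\hat\sigma_\ac(H_0)$, the measure $\sigma_{h_f}^{(\ac)}$ is supported on $\hat\sigma_\ac(H_0)$, so the Radon--Nikodym density of $\mu_f^{(\ac)}$ vanishes a.e. on $\Delta\setminus\hat\sigma_\ac(H_0)$; meanwhile, singular measures have zero Lebesgue density a.e. The standard differentiation theorem (of the kind recalled at the start of Section~\ref{sec.c1}) therefore gives
\[
\frac{d}{d\lambda}(F_0(\lambda)f,f)_\calK=0
\quad\text{for a.e. }\lambda\in\Delta\setminus\hat\sigma_\ac(H_0).
\]

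Third, I would promote this scalar conclusion to the operator statement. Pick a countable dense set $\{f_n\}\subset\calK$ and let $N\subset\Delta$ be the union of the null set from the hypothesis (where $F_0'(\lambda)$ fails to exist in operator norm) with the countably many null exceptional sets produced in the previous step, one per $f_n$. For $\lambda\in(\Delta\setminus\hat\sigma_\ac(H_0))\setminus N$, the positive operator $F_0'(\lambda)$ exists, and operator-norm differentiability implies, for every $n$, that $(F_0'(\lambda)f_n,f_n)_\calK=\frac{d}{d\lambda}(F_0(\lambda)f_n,f_n)_\calK=0$. Density of $\{f_n\}$ and boundedness of $F_0'(\lambda)$ yield $(F_0'(\lambda)f,f)_\calK=0$ for all $f\in\calK$, and positivity forces $F_0'(\lambda)=0$.

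The main obstacle is the first step: identifying $(F_0(\lambda)f,f)_\calK$ with a scalar-weighted spectral measure of $H_0$ requires routing the a priori unbounded factor $G$ through its compact regularisation $G(\abs{H_0}+I)^{-1/2}$ and keeping careful track of the resulting $(\abs{x}+1)$ weight. Once this identification is in place, the measure-theoretic step (Lebesgue decomposition plus a.e. differentiability) and the lift from scalar to operator via density are both routine.
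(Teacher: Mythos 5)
Your proof is correct and follows essentially the same route as the paper: both arguments reduce the claim to the a.e.\ vanishing of the Lebesgue derivative of scalar spectral measures off a Borel support of $E_0^{(\ac)}$ (the paper's fact \eqref{c19}, which it proves by exactly the Lebesgue decomposition you invoke), and then lift to the operator statement via a countable dense family and the assumed operator-norm differentiability. The only differences are bookkeeping: the paper regularises $G$ by cutting off with $E_0(\Delta)$ and works with the complex matrix measures $(E_0(\cdot)G_\Delta^*e_n,G_\Delta^*e_m)$ after enlarging the core by a Borel support of the singular part, whereas you thread $G$ through $(\abs{H_0}+I)^{-1/2}$ with the weight $\abs{x}+1$ and use only diagonal values together with positivity of $F_0'(\lambda)$.
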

\begin{proof}
1. Recall the following measure theoretic statement. Let $\mu$ be a finite
Borel measure on $\R$ and let $Z$ be a Borel support of $\mu$, 
i.e. $\mu(\R\setminus Z)=0$. 
Then 
\begin{equation}
\frac{d}{d\lambda}\mu(-\infty,\lambda)=0
\quad\text{ for Lebesgue-a.e. $\lambda\in\R\setminus Z$.}
\label{c19}
\end{equation}
Indeed, let $\mu=\mu_\ac+\mu_s$ be the decomposition 
of $\mu$ into the a.c. and singular components with respect
to the Lebesgue measure. Let 
$0\leq f\in L^1(\R)$ be the Radon-Nikodym 
derivative of  $\mu_\ac$ with respect to the 
Lebesgue measure. Then (see e.g. \cite[Section~8.6]{Rudin})
$$
\frac{d}{d\lambda}\mu_\ac(-\infty,\lambda)=f(\lambda), 
\quad 
\frac{d}{d\lambda}\mu_s(-\infty,\lambda)=0, 
\quad\text{ for Lebesgue-a.e. $\lambda\in\R$.}
$$
The statement $\mu(\R\setminus Z)=0$ implies that
$$
\int_{\R\setminus Z} f(\lambda)d\lambda=0.
$$
Thus, $f(\lambda)=0$ for Lebesgue-a.e. $\lambda\in\R\setminus Z$. 
From here we get \eqref{c19}.

2. Let $Z_s$ be a Borel support of the singular part of 
the spectral measure $E_0$. Since the Lebesgue measure
of $Z_s$ is zero, the set $\hat \sigma=\hat \sigma_\ac(H_0)\cup Z_s$
is again a core of the a.c. spectrum of $H_0$. Moreover, 
$\hat\sigma$ is a Borel support of $E_0$, i.e. 
$E_0(\R\setminus \hat\sigma)=0$.

3. Let $G_\Delta=GE_0(\Delta)$; by \eqref{a3}, $G_\Delta$ is a compact operator. 
Let $\{e_n\}_{n=1}^\infty$ be an orthonormal basis in $\calK$. 
Consider the complex valued measures
$$
\mu_{nm}(\Lambda)=(E_0(\Lambda)G_\Delta^*e_n,G_\Delta^*e_m),
\quad n,m\in\N, \quad \Lambda\subset \Delta.
$$
We have $\mu_{nm}(\Delta\setminus\hat \sigma)=0$. 
Representing each $\mu_{nm}$ as a linear combination of four non-negative
measures and applying \eqref{c19}, we obtain
$$
\frac{d}{d\lambda}\mu_{nm}(-\infty,\lambda)=0,
\quad \lambda\in (\Delta\setminus\hat \sigma)\setminus \Lambda_{nm},
\quad n,m\in\N,
$$
where the Lebesgue measure of the set $\Lambda_{nm}$ is zero.
It follows that 
\begin{equation}
\frac{d}{d\lambda}\mu_{nm}((-\infty,\lambda))=0,
\quad \lambda\in (\Delta\setminus\hat \sigma)\setminus \Lambda,
\quad n,m\in\N,
\label{c22}
\end{equation}
where $\Lambda=\cup_{n,m}\Lambda_{nm}$ and the Lebesgue measure of $\Lambda$ is zero.

4. Let $\mathcal D\subset\calK$ be the dense set of all finite linear combinations
of elements of the basis $\{e_n\}_{n=1}^\infty$. It follows from \eqref{c22} that
$$
\frac{d}{d\lambda}(E_0(-\infty,\lambda)G_\Delta^*f,G_\Delta^*g)=0,
\quad \forall f,g\in\mathcal D, 
\quad \text{ a.e. $\lambda\in\Delta\setminus\hat\sigma$,}
$$
and therefore $F_0'(\lambda)=0$ for a.e. $\lambda\in\Delta\setminus\hat\sigma$. 
\end{proof}

\subsection{Connection between $\alpha(\lambda)$ and $S(\lambda)$}\label{sec.c2}
First we establish a connection between $\alpha(\lambda)$ and some 
auxiliary unitary operator $\wt S(\lambda)$. The idea to use 
the operator $\wt S(\lambda)$ is due to A.~V.~Sobolev and 
D.~R.~Yafaev \cite{SobYaf}.

\begin{lemma}\label{lma.c8}
Assume \eqref{a3} and suppose that 
the derivatives $F_0'(\lambda)$ and $F'(\lambda)$ 
and the limits
$T_0(\lambda+i0)$, $T(\lambda+i0)$ exist 
for some $\lambda\in\R$. 
Then 
the operator
\begin{equation}
\wt S(\lambda)
=
I-2 i B_0(\lambda)^{1/2}(J-JT(\lambda+i0)J)B_0(\lambda)^{1/2}
\label{c3}
\end{equation}
in $\calK$ is unitary and 
\begin{equation}
\frac12\norm{\wt S(\lambda)-I}=\alpha(\lambda). 
\label{c3a}
\end{equation}
\end{lemma}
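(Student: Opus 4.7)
Let me abbreviate $T_0=T_0(\lambda+i0)$, $T=T(\lambda+i0)$, $B_0=B_0(\lambda)$, $B=B(\lambda)$. The resolvent identity \eqref{a21} can be written in either of the equivalent forms $(I+T_0J)T=T_0$ and $T(I+JT_0)=T_0$; taking adjoints gives $T^*(I+JT_0^*)=T_0^*$ and $(I+T_0^*J)T^*=T_0^*$. Expanding and using the second form,
\begin{equation*}
(I-TJ)(I+T_0J)=I+(T_0-T-TJT_0)J=I,
\end{equation*}
so $I-TJ=(I+T_0J)^{-1}$, hence $J-JTJ=J(I+T_0J)^{-1}$. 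If we set $X=B_0^{1/2}(J-JTJ)B_0^{1/2}=B_0^{1/2}J(I+T_0J)^{-1}B_0^{1/2}$ then $\wt S(\lambda)=I-2iX$.

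The algebraic heart of the argument is the \emph{optical identity}
\begin{equation*}
(I+T_0 J)\,B\,(I+JT_0^*)=B_0,
\end{equation*}
which I would verify by writing $2iB=T-T^*$ and expanding the left side: using $(I+T_0J)T=T_0$ and $T^*(I+JT_0^*)=T_0^*$, the four-term product collapses to $T_0-T_0^*=2iB_0$. Since $B,B_0$ are self-adjoint, taking the adjoint yields the companion form $(I+JT_0^*)B(I+T_0J)=B_0$.

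For unitarity, expand $\wt S\wt S^*=I+2i(X^*-X)+4XX^*$. Here $X-X^*=-B_0^{1/2}J(T-T^*)JB_0^{1/2}=-2iB_0^{1/2}JBJB_0^{1/2}$, so the middle term equals $-4B_0^{1/2}JBJB_0^{1/2}$; by the optical identity, $XX^*=B_0^{1/2}J(I+T_0J)^{-1}B_0(I+JT_0^*)^{-1}JB_0^{1/2}=B_0^{1/2}JBJB_0^{1/2}$, so the two cancel and $\wt S\wt S^*=I$. For $\wt S^*\wt S=I$ one either runs the same calculation with the companion identity, or observes that $B_0$ is compact (since $T_0(\lambda+i0)$ is a norm limit of compact operators, cf.\ \eqref{a19}), whence $\wt S-I$ is compact, $\wt S$ is Fredholm of index zero, and $\wt S\wt S^*=I$ already forces full unitarity.

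For the norm,
\begin{equation*}
\|\wt S(\lambda)-I\|^2=4\|X\|^2=4\|XX^*\|=4\|B_0^{1/2}JBJB_0^{1/2}\|=4\|B_0^{1/2}JB^{1/2}\|^2.
\end{equation*}
Substituting $B_0=\pi F_0'(\lambda)$ and $B=\pi F'(\lambda)$ from \eqref{c1} and invoking $\alpha(\lambda)=\pi\|F_0'(\lambda)^{1/2}JF'(\lambda)^{1/2}\|$ from Theorem~\ref{th1} yields $\tfrac12\|\wt S(\lambda)-I\|=\alpha(\lambda)$. The only nontrivial step is the optical identity; everything else is algebra, modulo a minor check that $I+T_0J$ is invertible at the boundary (which follows by taking $\varepsilon\to+0$ in $(I+T_0(\lambda+i\varepsilon)J)^{-1}=I-T(\lambda+i\varepsilon)J$).
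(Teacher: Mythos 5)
Your proof is correct and follows essentially the same route as the paper: both establish $I-T(\lambda+i0)J=(I+T_0(\lambda+i0)J)^{-1}$ from \eqref{a21}, derive the identity relating $B$ and $B_0$ (your ``optical identity'' is exactly the paper's $JB(\lambda)J=(J-JT(\lambda+i0)J)B_0(\lambda)(J-JT(\lambda+i0)^*J)$ conjugated by $J$), deduce unitarity by direct computation, and then reduce $\norm{\wt S(\lambda)-I}$ to $\norm{B_0^{1/2}JB^{1/2}}$ and invoke \eqref{c1} and \eqref{a7}. Your explicit treatment of $\wt S^*\wt S=I$ via compactness of $\wt S-I$ and the index-zero argument is a nice touch that the paper leaves as ``a direct calculation.''
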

\begin{proof}
1. 
From \eqref{a21} one easily obtains the identity
\begin{equation}
I-T(z)J
=
(I+T_0(z)J)^{-1}, 
\quad \Im z>0.
\label{c7}
\end{equation}
Since the limits $T_0(\lambda+i0)$ and $T(\lambda+i0)$ 
exist in the operator norm, we conclude that
the operator $I+T_0(\lambda+i0)J$ has a
bounded inverse and 
\begin{equation}
I-T(\lambda+i0)J
=
(I+T_0(\lambda+i0)J)^{-1}.
\label{c7a}
\end{equation}
In the same way, one obtains
\begin{equation}
I-JT(\lambda+i0)
=
(I+JT_0(\lambda+i0))^{-1}.
\label{c7b}
\end{equation}
Taking adjoints in \eqref{c7b} and subtracting from \eqref{c7a},
after some simple algebra we get
$$
JB(\lambda)J
=
(J-JT(\lambda+i0)J)B_0(\lambda)(J-JT(\lambda+i0)^*J).
$$
From here the unitarity of $\wt S(\lambda)$ follows 
by a direct calculation. 

2. 
Using the unitarity of $\wt S(\lambda)$ 
and the identity \eqref{c1}, we obtain 
\begin{multline*}
(\wt S(\lambda)-I)^*(\wt S(\lambda)-I)
=
2I-2\Re\wt S(\lambda)
=
4\ \Im(B_0(\lambda)^{1/2}JT(\lambda+i0)JB_0(\lambda)^{1/2})
\\
=
4\ \Im(B_0(\lambda)^{1/2}JB(\lambda)JB_0(\lambda)^{1/2})
=
4\pi^2\Im(F_0'(\lambda)^{1/2}JF'(\lambda)JF_0'(\lambda)^{1/2}).
\end{multline*}
From here, taking into account \eqref{a7},  we get
$$
\frac14\norm{\wt S(\lambda)-I}^2
=
\pi^2
\norm{F_0'(\lambda)^{1/2}JF'(\lambda) JF_0'(\lambda)^{1/2}}
=
\pi^2\norm{F_0'(\lambda)^{1/2}JF'(\lambda)^{1/2}}^2
=
\alpha(\lambda)^2,
$$
as required. 
\end{proof}
The following Lemma is essentially contained in 
\cite[Section~7.7]{Yafaev}.

\begin{lemma}\label{prp.c3}
(i) Under the assumptions \eqref{a3}, \eqref{a11}, 
the local wave operators 
$W_\pm(H_0,H;\Delta)$ exist and are complete, and
for a.e. $\lambda\in\hat\sigma_\ac(H_0)\cap\Delta$ we have
\begin{equation}
\norm{S(\lambda)-I}=\norm{\wt S(\lambda)-I}.
\label{c4}
\end{equation}

(ii) Under the assumptions of Theorem~\ref{cr.a3} or \ref{cr.a4},
the wave operators $W_\pm(H_0,H)$ exist and are complete, and
for a.e. $\lambda\in\hat\sigma_\ac(H_0)$, the relation
\eqref{c4} holds true.
\end{lemma}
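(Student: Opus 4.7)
The statement splits into two independent parts: (A) existence and completeness of the (local) wave operators, and (B) the norm identity $\norm{S(\lambda)-I}=\norm{\wt S(\lambda)-I}$ for a.e. $\lambda$. I would address each in both frameworks.

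For (A) in the smooth setting (i), the uniform continuity of $T_0(z)=GR_0(z)G^*$ on the closed rectangle adjacent to $\Delta$ is precisely the hypothesis of Kato's smoothness criterion, giving local $H_0$-smoothness of $G$ on $\Delta$; the analogous assumption on $T(z)$ gives local $H$-smoothness of $G$. The standard theorem on smooth perturbations (cf. \cite[Ch.~4]{Yafaev}) then yields existence and completeness of $W_\pm(H_0,H;\Delta)$. For (A) in the trace class setting (ii), Theorem~\ref{cr.a3} is immediate from Kato--Rosenblum. For Theorem~\ref{cr.a4}, the combination of \eqref{a3} with \eqref{a13} yields trace class control on a sufficiently high power of the resolvent difference (by essentially the reasoning underlying \eqref{c2}), so Kato--Birman completeness applies, cf. \cite[Sec.~6.4]{Yafaev}.

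For (B), the heart of the argument is the stationary representation of the scattering matrix. In either framework, under our hypotheses, Yafaev's construction (\cite[Sec.~5.5]{Yafaev} in the smooth case, \cite[Sec.~7.6--7.7]{Yafaev} in the trace class case) produces, for a.e. $\lambda$ in $\hat\sigma_\ac(H_0)$ (intersected with $\Delta$ in case (i)), a bounded ``fiber evaluation'' operator $Z(\lambda)\colon\calK\to\mathfrak{h}(\lambda)$ satisfying
\begin{equation*}
Z(\lambda)^*Z(\lambda)=F_0'(\lambda)=\tfrac{1}{\pi}B_0(\lambda),
\end{equation*}
in terms of which
\begin{equation*}
S(\lambda)=I_{\mathfrak{h}(\lambda)}-2\pi i\,Z(\lambda)\bigl(J-JT(\lambda+i0)J\bigr)Z(\lambda)^*.
\end{equation*}
Introducing the polar decomposition $Z(\lambda)=U(\lambda)F_0'(\lambda)^{1/2}$, with $U(\lambda)$ a partial isometry whose initial space is $\overline{\Ran F_0'(\lambda)}=(\Ker B_0(\lambda))^\perp$, direct substitution (using $\pi F_0'(\lambda)=B_0(\lambda)$) gives
\begin{equation*}
S(\lambda)-I=-2i\,U(\lambda)B_0(\lambda)^{1/2}\bigl(J-JT(\lambda+i0)J\bigr)B_0(\lambda)^{1/2}U(\lambda)^*=U(\lambda)\bigl(\wt S(\lambda)-I\bigr)U(\lambda)^*.
\end{equation*}
Since $\wt S(\lambda)-I$ vanishes on $\Ker B_0(\lambda)=\Ker U(\lambda)$, this presents $S(\lambda)-I$ and $\wt S(\lambda)-I$ as unitarily equivalent on their non-trivial parts, so $\norm{S(\lambda)-I}=\norm{\wt S(\lambda)-I}$, which is \eqref{c4}.

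The main obstacle is the stationary representation of $S(\lambda)$ in the trace class framework, where the fiber $\mathfrak{h}(\lambda)$ and the operator $Z(\lambda)$ exist only for a.e. $\lambda$ and depend on the choice of spectral representation of $H_0^{(\ac)}$. This is exactly the content of \cite[Sec.~7.7]{Yafaev}, and I would invoke those results (adapted to our factorisation $V=G^*JG$) rather than reprove them. Once the stationary formula is in hand, the reduction of $S(\lambda)-I$ to $\wt S(\lambda)-I$ is a one-line polar decomposition argument, and the final identity $\norm{S(\lambda)-I}=\norm{\wt S(\lambda)-I}$ is intrinsic, independent of the conventions entering the construction of $Z(\lambda)$.
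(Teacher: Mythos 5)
Your proposal is correct and follows essentially the same route as the paper: cite the standard smooth/trace-class results for existence and completeness of the wave operators, invoke Yafaev's stationary representation $S(\lambda)=I-2\pi i Z(\lambda)(J-JT(\lambda+i0)J)Z(\lambda)^*$ with $\pi Z(\lambda)^*Z(\lambda)=B_0(\lambda)$, and pass to $\wt S(\lambda)$ via the polar decomposition of $Z(\lambda)$ (the paper's proof is modelled on \cite[Lemma~7.7.1]{Yafaev}). Your explicit remark that $\wt S(\lambda)-I$ vanishes on $\Ker B_0(\lambda)$, the orthogonal complement of the initial space of the partial isometry, is a welcome clarification of why the norm identity follows, but it is not a different argument.
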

\begin{proof}
(i) For the existence and completeness of wave operators, 
we refer to \cite[Section~4.5]{Yafaev}. 
Next, for a.e. 
$\lambda\in\hat\sigma_\ac(H_0)\cap\Delta$, 
the scattering matrix can be represented as 
\begin{equation}
S(\lambda)
=
I-2\pi iZ(\lambda)(J-JT(\lambda+i0)J)Z(\lambda)^*, 
\label{c5}
\end{equation}
where $Z(\lambda):\calK\to\mathfrak h(\lambda)$ is an 
operator such that
\begin{equation}
\pi Z(\lambda)^*Z(\lambda)=B_0(\lambda).
\label{c6}
\end{equation}
This is the well known stationary representation for the 
scattering matrix, see e.g. \cite[Section~5.5(3)]{Yafaev}. 
Let us use the polar decomposition of $Z(\lambda)$, 
$Z(\lambda)=U\abs{Z(\lambda)}$, where
$\abs{Z(\lambda)}=\sqrt{Z(\lambda)^*Z(\lambda)}=B_0(\lambda)^{1/2}/\pi$,
and $U$ is an isometry which maps $\overline{\Ran Z(\lambda)^*}$
onto $\overline{\Ran Z(\lambda)}$. 
Then we get
$$
S(\lambda)-I=U(\wt S(\lambda)-I)U^*,
$$
and \eqref{c4} follows. 
This argument is borrowed from \cite[Lemma~7.7.1]{Yafaev}.

(ii)
Existence and completeness of wave operators
is well known, see e.g. 
\cite[Theorem~6.4.5]{Yafaev}. 
As in the proof of part (i), we have the representation
\eqref{c5}, \eqref{c6} for a.e. $\lambda\in\hat\sigma_\ac(H_0)$
(see e.g. \cite[Section~5.5(3)]{Yafaev})
and the required statement follows by the same argument as above. 
\end{proof}

\begin{proof}[Proof of Theorem~\ref{cr.a2}] 
The existence of the derivatives $F_0'(\lambda)$ 
and $F'(\lambda)$ follows from Lemma~\ref{prp.c1}. 
Thus, by Theorem~\ref{th1}, we obtain \eqref{a8}. 
By Lemma~\ref{lma.c8} and Lemma~\ref{prp.c3}, we have
$$
\alpha(\lambda)=\frac12 
\norm{\wt S(\lambda)-I}=\frac12 \norm{S(\lambda)-I}
$$
for a.e. $\lambda\in\hat\sigma_\ac(H_0)\cap\Delta$.
Thus, we have \eqref{a9}  and therefore \eqref{a1} 
for a.e. $\lambda\in\hat\sigma_\ac(H_0)\cap\Delta$. 
On the other hand, for a.e. $\lambda\in\Delta\setminus\hat\sigma_\ac(H_0)$, by 
Lemma~\ref{lma.c5}, we have $\alpha(\lambda)=0$.
Thus, according to \eqref{a23}, the relations \eqref{a9} and \eqref{a1}
hold true also for a.e. $\lambda\in\Delta\setminus\hat\sigma_\ac(H_0)$.
\end{proof}

\begin{proof}[Proof of Theorems~\ref{cr.a3} and \ref{cr.a4}]
By Lemma~\ref{prp.c2}, the derivatives $F_0'(\lambda)$, 
$F'(\lambda)$ and the limits $T_0(\lambda+i0)$, $T(\lambda+i0)$
exist for a.e. $\lambda\in\R$. 
Thus, the identity  \eqref{a8} follows from Theorem~\ref{th1}.
The identities \eqref{a9} and \eqref{a1} follow for a.e. 
$\lambda\in\R$ 
as in the proof of  Theorem~\ref{cr.a2}. 
\end{proof}

\subsection{The Fredholm property}\label{sec.c3}
\begin{proof}[Proof of Theorem~\ref{th5}]

1. As in the proof of Lemma~\ref{lma.c8}, 
we get that the operators $I+T_0(\lambda+i0)J$ and
$I+JT_0(\lambda+i0)$ have bounded inverses and the 
identities \eqref{c7a}, \eqref{c7b} hold true.

2. 
From \eqref{c7a}, \eqref{c7b} we obtain
$$
I-A(\lambda)J
=
(I+T_0(\lambda+i0)J)^{-1}(I+A_0(\lambda)J)(I+T_0(\lambda+i0)^*J)^{-1}.
$$
This proves that $\dim\Ker(I-A(\lambda)J)=\dim\Ker(I+A_0(\lambda)J)$ 
and so (ii)$\Leftrightarrow$(iii).

3. 
Let us prove that 
\begin{equation}
\dim\Ker(I+\wt S(\lambda))=
\dim\Ker(I+A_0(\lambda)J).
\label{c9}
\end{equation}
Using the identity \eqref{c7a} and the fact that 
$\dim\Ker(I+XY)=\dim\Ker(I+YX)$ for any bounded 
operators $X$ and $Y$, we obtain:
\begin{align*}
\dim\Ker(I+\wt S(\lambda))
&=
\dim\Ker(I-iB_0(\lambda)^{1/2}J(I-T(\lambda+i0)J) B_0(\lambda)^{1/2})
\\
&=
\dim\Ker(I-iB_0(\lambda)^{1/2}J(I+T_0(\lambda+i0)J)^{-1} B_0(\lambda)^{1/2})
\\
&=\dim\Ker(I-iB_0(\lambda)J(I+T_0(\lambda+i0)J)^{-1})
\\
&=\dim\Ker(I+T_0(\lambda+i0)J-iB_0(\lambda)J)
\\
&=\dim\Ker(I+A_0(\lambda)J),
\end{align*}
as required.

4.
Let us prove that (i)$\Leftrightarrow$(ii). 
By the definition \eqref{a13a} and by Theorem~\ref{th1}, it suffices to prove that 
$\alpha(\lambda)<1$ if and only if $\Ker(I+A_0(\lambda)J)=\{0\}$. 
Suppose that $\Ker(I+A_0(\lambda)J)=\{0\}$. 
Then by \eqref{c9}, we have $\Ker(I+\wt S(\lambda))=\{0\}$. 
Since $\wt S(\lambda)-I$ is compact,
it follows that $-1\notin\sigma(\wt S(\lambda))$. 
Since $\wt S(\lambda)$ is unitary, we get $\norm{\wt S(\lambda)-I}<2$. 
By \eqref{c3a}, it follows that $\alpha(\lambda)<1$.
 
Conversely, suppose that $\dim\Ker(I+A_0(\lambda)J)>0$. 
Then $\dim\Ker(I+\wt S(\lambda))>0$ and therefore 
$\norm{\wt S(\lambda)-I}=2$. 
By \eqref{c3a}, it follows that $\alpha(\lambda)=1$. 
\end{proof}

\section{Piecewise continuous functions $\varphi$}\label{sec.d}

We closely follow the proof used by S.~Power in his description 
\cite{Power}
of 
the essential spectrum of Hankel operators with piecewise 
continuous symbols.
We use the shorthand notation 
$$
\delta(\varphi)=\varphi(H)-\varphi(H_0).
$$
\subsection{Auxiliary statements }\label{sec.d1}

\begin{lemma}\label{lma.c4}
Assume \eqref{a3} and let $\varphi_1,\varphi_2\in PC_0(\R)$. 
Suppose that $$
\singsupp \varphi_1\cap\singsupp \varphi_2=\varnothing.
$$
Then the operator $\delta(\varphi_1)\delta(\varphi_2)$ is compact. 
\end{lemma}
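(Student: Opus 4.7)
The plan is to follow Power's reduction. Since $\singsupp \varphi_1$ and $\singsupp \varphi_2$ are disjoint closed subsets of $\R$ (and, because $\varphi_i \to 0$ at infinity, both are bounded), I can choose $\psi_1, \psi_2 \in C_c(\R)$ with disjoint supports such that $\psi_i \equiv 1$ on an open neighborhood of $\singsupp \varphi_i$. Decomposing $\varphi_i = \psi_i \varphi_i + (1-\psi_i)\varphi_i$, the function $(1-\psi_i)\varphi_i$ is continuous (its jumps are killed) and vanishes at infinity, so by Lemma~\ref{lma.b4} the operator $\delta((1-\psi_i)\varphi_i)$ is compact. Hence, modulo compacts,
\begin{equation*}
\delta(\varphi_1)\delta(\varphi_2) \equiv \delta(f_1)\delta(f_2),\qquad f_i := \psi_i \varphi_i,
\end{equation*}
and by construction $f_1 f_2 \equiv 0$ and $\supp f_1 \cap \supp f_2 = \varnothing$.

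Next I expand
\begin{equation*}
\delta(f_1)\delta(f_2) = f_1(H)f_2(H) - f_1(H)f_2(H_0) - f_1(H_0)f_2(H) + f_1(H_0)f_2(H_0).
\end{equation*}
Since $f_1 f_2 = 0$ pointwise, $f_1(H)f_2(H) = (f_1 f_2)(H) = 0$ and likewise $f_1(H_0)f_2(H_0) = 0$, so
\begin{equation*}
\delta(f_1)\delta(f_2) = -f_1(H)f_2(H_0) - f_1(H_0)f_2(H).
\end{equation*}
It therefore suffices to show that each of these two cross terms is compact; I will treat $f_1(H_0)f_2(H)$, the other being symmetric.

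Choose $\eta \in C_c(\R)$ with $\eta \equiv 1$ on $\supp f_1$ and $\eta \equiv 0$ on a neighborhood of $\supp f_2$ (possible because these are disjoint compact sets). Then $\eta f_2 = 0$ and $(1-\eta)f_1 = 0$ identically. Writing $1 = \eta + (1-\eta)$ and inserting $\eta(H) + (1-\eta)(H)$ in the middle, together with the exchange $(1-\eta)(H) = (1-\eta)(H_0) + \delta(1-\eta)$, yields
\begin{equation*}
f_1(H_0)f_2(H) = f_1(H_0)\eta(H)f_2(H) + f_1(H_0)(1-\eta)(H_0)f_2(H) + f_1(H_0)\delta(1-\eta)f_2(H).
\end{equation*}
The first term vanishes because $\eta(H)f_2(H) = (\eta f_2)(H) = 0$; the second vanishes because $f_1(1-\eta) = 0$ gives $f_1(H_0)(1-\eta)(H_0) = 0$. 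The remaining term equals $-f_1(H_0)\delta(\eta)f_2(H)$, and since $\eta \in C_0(\R)$ Lemma~\ref{lma.b4} says $\delta(\eta)$ is compact, making this term compact.

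The argument is essentially algebraic; the only real content is the Urysohn-type separation of $\supp f_1$ from $\supp f_2$ by the cutoff $\eta$, which is where the disjointness hypothesis on the singular supports is actually used. I expect no technical obstacle beyond verifying that $\psi_i$ can be arranged with disjoint supports while still equalling $1$ on $\singsupp \varphi_i$; this is straightforward because $\singsupp \varphi_i$ are bounded disjoint closed sets.
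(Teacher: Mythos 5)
Your argument is correct and essentially the same as the paper's: both split off a continuous part that Lemma~\ref{lma.b4} handles, reduce to functions $f_1,f_2$ with disjoint supports, kill the diagonal terms $f_1(H)f_2(H)$ and $f_1(H_0)f_2(H_0)$ via $f_1f_2=0$, and absorb each cross term into a factor $\delta(\eta)$ for a continuous separating cutoff $\eta$ (the paper's $\omega$). One small inaccuracy: for $\varphi_i\in PC_0(\R)$ the set $\singsupp\varphi_i$ need \emph{not} be bounded (jumps may accumulate at infinity with sizes tending to zero), so $\psi_i$ cannot always be taken in $C_c(\R)$; this is harmless here because Lemma~\ref{lma.b4} only requires the cutoffs to have limits at $\pm\infty$, and the paper's own choice of $\omega\in C_0(\R)$ rests on the same tacit assumption.
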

\begin{proof}
1. 
For $j=1,2$ one can represent $\varphi_j$ as 
$\varphi_j=\psi_j+\zeta_j$, 
where $\zeta_j\in C_0(\R)$, $\psi_j\in PC_0(\R)$  and 
$\supp \psi_1\cap\supp \psi_2=\varnothing$. 
By Lemma~\ref{lma.b4}, the operators 
$\delta(\zeta_1)$ and $\delta(\zeta_2)$ are compact. 
We have
$$
\delta(\varphi_1)\delta(\varphi_2)
=
(\delta(\psi_1)+\delta(\zeta_1))(\delta(\psi_2)+\delta(\zeta_2))
$$
and so it suffices to prove that the operator 
$\delta(\psi_1)\delta(\psi_2)$ is compact. 

2. One can choose $\omega\in C_0(\R)$ 
such that $\psi_1\omega=\psi_1$ and $\omega\psi_2\equiv0$. 
Then $\omega(H_0)\psi_2(H_0)=0$ and 
$$
\psi_1(H)\psi_2(H_0)
=
\psi_1(H)\omega(H)\psi_2(H_0)
\\
=
\psi_1(H)(\omega(H)-\omega(H_0))\psi_2(H_0),
$$
and the operator in the r.h.s. is compact by Lemma~\ref{lma.b4}.
By the same argument, the operator $\psi_1(H_0)\psi_2(H)$
is compact. It follows that the operator 
\begin{multline*}
\delta(\psi_1)\delta(\psi_2)
=
(\psi_1(H)-\psi_1(H_0))(\psi_2(H)-\psi_2(H_0))
\\
=
-\psi_1(H_0)\psi_2(H)
-\psi_1(H)\psi_2(H_0)
\end{multline*}
is compact, as required. 
\end{proof}

\begin{lemma}\label{lma.c6}
Let $A_n$, $n=1,\dots,N$,  be bounded operators in a Hilbert
space. Assume that 
$A_nA_m$ is compact for all $n\not=m$.  Then 
\begin{equation}
\sigma_\ess(A_1+\dots+A_N)\cup\{0\}
=
\left(\cup_{j=1}^N\sigma_\ess(A_j)\right).
\label{c13}
\end{equation}
\end{lemma}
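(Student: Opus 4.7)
The plan is to pass to the Calkin algebra $\mathcal{C}=\mathcal{B}(\calH)/\mathcal{K}(\calH)$, where $\mathcal{K}(\calH)$ is the ideal of compact operators. Writing $a_j=[A_j]$ and $a=[A_1+\dots+A_N]=a_1+\dots+a_N$, Atkinson's theorem identifies $\sigma_\ess(M)$ with the Calkin-algebra spectrum of $[M]$, and the hypothesis $A_nA_m\in\mathcal{K}(\calH)$ for $n\neq m$ becomes $a_na_m=0$. The lemma thus reduces to the purely algebraic statement that, in any unital Banach algebra, a family of pairwise annihilating elements $a_1,\dots,a_N$ satisfies $\sigma(a_1+\dots+a_N)\cup\{0\}=\bigcup_{j=1}^N\sigma(a_j)$. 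The key consequences of $a_ja_k=0$ for $j\neq k$ that I would use throughout are
$$
a_ja=aa_j=a_j^2 \quad\text{and}\quad (a-a_j)a_j=a_j(a-a_j)=0, \qquad j=1,\dots,N.
$$

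For the inclusion $\sigma(a)\setminus\{0\}\subseteq\bigcup_j\sigma(a_j)$, I would fix $z\neq 0$ with each $a_j-z$ invertible in $\mathcal{C}$ and verify directly the resolvent formula
$$
(a-z)^{-1}=\sum_{j=1}^N (a_j-z)^{-1}+\frac{N-1}{z}.
$$
Indeed, $(a-a_j)(a_j-z)=-z(a-a_j)$ gives $(a-a_j)(a_j-z)^{-1}=-(a-a_j)/z$, so that $(a-z)(a_j-z)^{-1}=1-(a-a_j)/z$; summing over $j$ and combining with $(a-z)\cdot(N-1)/z=(N-1)a/z-(N-1)$ yields $1$ once the $a/z$ terms cancel. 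The mirror-image computation confirms this is a two-sided inverse.

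For the reverse inclusion I would fix $z\neq 0$ and assume $b=(a-z)^{-1}\in\mathcal{C}$, i.e.\ $ab=ba=1+zb$. Multiplying the first relation on the left by $a_j$ and the second on the right by $a_j$ and invoking $a_ja=aa_j=a_j^2$, I obtain $(a_j-z)(a_jb)=a_j$ and $(ba_j)(a_j-z)=a_j$, which rearrange to
$$
(a_j-z)\cdot\frac{a_jb-1}{z}=1=\frac{ba_j-1}{z}\cdot(a_j-z).
$$
Hence $a_j-z$ is invertible in $\mathcal{C}$ for every $j$. Combining both directions gives $\sigma(a)\cup\{0\}=\bigcup_j\sigma(a_j)\cup\{0\}$, and the trailing $\{0\}$ on the right is redundant when $N\geq 2$: if every $a_j$ were invertible in $\mathcal{C}$, the relation $a_ja_k=0$ would force $a_k=a_j^{-1}(a_ja_k)=0$ for any $k\neq j$, contradicting invertibility of $a_k$. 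I do not foresee any real obstacle; the only care needed is tracking left versus right multiplication in the non-commutative Calkin algebra.
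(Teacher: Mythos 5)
Your proof is correct, and it follows the same route the paper indicates: the paper does not spell out an argument but simply cites Peller's book for ``a proof via the Calkin algebra argument,'' which is exactly what you carry out, with the explicit resolvent identity $(a-z)^{-1}=\sum_j(a_j-z)^{-1}+(N-1)/z$ and the left/right-inverse computations all verifying correctly. Your closing remark that absorbing $\{0\}$ into the right-hand side requires $N\geq 2$ is also the one genuine point of care (the identity as stated fails for $N=1$), and you handle it properly.
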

See e.g. \cite[Section~10.1]{Peller} for a proof via the 
Calkin algebra argument. 
We would like to emphasise that Lemma~\ref{lma.c6} 
holds true with the definition of the essential spectrum 
as stated in Section~\ref{sec.a6}; it is in general false for some
other definitions of the essential spectrum, see e.g. 
\cite[Section~XIII.4, Example~1]{RS4}.

\subsection{Proof of Theorem~\ref{th10} }\label{sec.d2}
We start by considering the case of finitely many discontinuities:

\begin{lemma}\label{lma.c7}
Assume the hypothesis of Theorem~\ref{th10} and suppose in 
addition that the set $\singsupp \varphi$ is finite. 
Then the conclusion of Theorem~\ref{th10}  holds true.
\end{lemma}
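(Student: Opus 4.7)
The plan is to reduce the claim to the case of a single jump, localize the problem near each point of $\singsupp\varphi$, and then identify the local contribution with $\sigma_\ess(E(-\infty,\lambda) - E_0(-\infty,\lambda))$, which by Theorem~\ref{th1} is already known to equal $[-\alpha(\lambda),\alpha(\lambda)]$.

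Enumerate $\singsupp\varphi = \{\lambda_1,\dots,\lambda_N\} \subset \Delta$ and set $\varkappa_j = \varkappa_{\lambda_j}(\varphi)$. For each $j$ I would pick a cutoff $\omega_j \in C(\R)$ with compact support and with $\omega_j \equiv 1$ on a neighbourhood of $\lambda_j$, and introduce the ``model jump''
$$
\theta_j(x) := -\chi_{(-\infty,\lambda_j)}(x)\omega_j(x) \in PC_0(\R),
$$
which carries a single unit jump at $\lambda_j$ and is continuous elsewhere. Then $\psi := \varphi - \sum_{j=1}^N \varkappa_j\theta_j$ lies in $C_0(\R)$, so $\delta(\psi)$ is compact by Lemma~\ref{lma.b4}, and we obtain $\delta(\varphi) \equiv \sum_{j=1}^N \varkappa_j\delta(\theta_j)$ modulo compact operators. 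Since the $\singsupp\theta_j$ are pairwise disjoint, Lemma~\ref{lma.c4} makes every cross product $\delta(\theta_j)\delta(\theta_k)$, $j\neq k$, compact, and Lemma~\ref{lma.c6} then gives
$$
\sigma_\ess(\delta(\varphi)) \cup \{0\} = \bigcup_{j=1}^N \varkappa_j\,\sigma_\ess(\delta(\theta_j)).
$$

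The heart of the proof will be the identification $\sigma_\ess(\delta(\theta_j)) = [-\alpha(\lambda_j),\alpha(\lambda_j)]$. By functional calculus,
$$
\delta(\theta_j) = -\bigl(E(-\infty,\lambda_j)-E_0(-\infty,\lambda_j)\bigr)\omega_j(H)
 - E_0(-\infty,\lambda_j)\bigl(\omega_j(H)-\omega_j(H_0)\bigr),
$$
and the second summand is compact by Lemma~\ref{lma.b4}. To absorb the $\omega_j(H)$ factor in the first summand I would write $\omega_j(H) = I - (1-\omega_j)(H)$ and observe that
$$
\zeta(x) := \chi_{(-\infty,\lambda_j)}(x)\bigl(1-\omega_j(x)\bigr)
$$
is \emph{continuous} on $\R$ (its would-be jump at $\lambda_j$ vanishes because $\omega_j(\lambda_j)=1$) and has existing limits at $\pm\infty$, so $\zeta(H)-\zeta(H_0)$ is compact by Lemma~\ref{lma.b4}. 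A short manipulation then gives $\delta(\theta_j) = -\bigl(E(-\infty,\lambda_j)-E_0(-\infty,\lambda_j)\bigr) + \text{compact}$, and Theorem~\ref{th1} supplies the essential spectrum. Since each segment $[-\varkappa_j\alpha(\lambda_j),\varkappa_j\alpha(\lambda_j)]$ is symmetric about $0$, the spurious $\{0\}$ from Lemma~\ref{lma.c6} is automatically absorbed, and noting that $\varkappa_\lambda(\varphi)=0$ off the finite set $\{\lambda_1,\dots,\lambda_N\}$ recovers the union over $\lambda\in\Delta$ in \eqref{a16}.

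The main obstacle I foresee is the last reduction: replacing $\omega_j(H)$ by $I$ in $(E(-\infty,\lambda_j)-E_0(-\infty,\lambda_j))\omega_j(H)$. Neither factor is compact by itself, so the replacement is not obvious; the key is to exploit that $\omega_j\equiv 1$ near $\lambda_j$, so that the complementary tail $\chi_{(-\infty,\lambda_j)}(1-\omega_j)$ becomes a \emph{continuous} function of two-sided limits at infinity, which is precisely the class for which Lemma~\ref{lma.b4} provides compactness (and precisely the reason Lemma~\ref{lma.b4} was stated for this broader class rather than just $C_0(\R)$).
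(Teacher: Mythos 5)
Your proof is correct and follows essentially the same route as the paper's: split $\varphi$ into single-jump pieces, reduce each piece to $E(-\infty,\lambda_j)-E_0(-\infty,\lambda_j)$ modulo compact operators using Lemma~\ref{lma.b4}, invoke Theorem~\ref{th1} (whose differentiability hypothesis holds on $\Delta$ by \eqref{a11} via Lemma~\ref{prp.c1}), and combine the pieces with Lemmas~\ref{lma.c4} and~\ref{lma.c6}. The only difference is cosmetic: the paper normalizes the one-jump function affinely and writes it directly as $\chi_{(-\infty,\lambda_0)}+\zeta$ with $\zeta$ continuous, whereas you localize with a cutoff $\omega_j$ and then strip off the factor $\omega_j(H)$; both reductions rest on the same compactness lemma.
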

\begin{proof}
1. 
First assume that $\varphi$ has only one discontinuity, i.e.
$\singsupp \varphi=\{\lambda_0\}$. 
Denote
\begin{equation}
\wt \varphi(\lambda)=
(\varphi(\lambda_0+0)-\varphi(\lambda))/
\varkappa_{\lambda_0}(\varphi).
\label{c13a}
\end{equation}
Then $\wt \varphi(\lambda_0-0)=1$, $\wt\varphi(\lambda_0+0)=0$. 
We can write $\wt\varphi$ as
$$
\wt \varphi(\lambda)
=
\chi_{(-\infty,\lambda_0)}(\lambda)+\zeta(\lambda),
$$
where $\chi_{(-\infty,\lambda_0)}(\lambda)$ is the characteristic
function of $(-\infty,\lambda_0)$ and $\zeta\in C(\R)$ is such that
the limits of $\zeta(\lambda)$ as $\lambda\to\pm\infty$ exist. 
Then by Lemma~\ref{lma.b4}, $\zeta(H)-\zeta(H_0)$ is compact, 
and so 
$$
\wt\varphi(H)-\wt\varphi(H_0)
=
E(-\infty,\lambda_0)-E_0(-\infty,\lambda_0)+\text{ compact operator}.
$$
By Theorem~\ref{cr.a2} and Weyl's theorem on the invariance of the essential 
spectrum under the compact perturbations, we obtain
$$
\sigma_\ess(\wt\varphi(H)-\wt\varphi(H_0))=[-\alpha(\lambda_0),\alpha(\lambda_0)].
$$
Recalling the definition \eqref{c13a} of $\varphi$, we obtain 
\begin{equation}
\sigma_\ess(\varphi(H)-\varphi(H_0))
=
[-\alpha(\lambda_0)\varkappa_{\lambda_0}(\varphi),
\alpha(\lambda_0)\varkappa_{\lambda_0}(\varphi)].
\label{c18}
\end{equation}

2. Consider the general case; let 
$\singsupp\varphi=\{\lambda_1,\dots,\lambda_N\}\subset\Delta$. 
One can represent $\varphi=\sum_{n=1}^N \varphi_n$, 
where $\varphi_n\in PC_0(\R)$, $\singsupp\varphi_n=\{\lambda_n\}$
and $\varkappa_{\lambda_n}(\varphi_n)=\varkappa_{\lambda_n}(\varphi)$
for each $n$. 
Then 
$$
\delta(\varphi)
=
\sum_{n=1}^N\delta(\varphi_n),
$$
and by Lemma~\ref{lma.c4}, the operators 
$\delta(\varphi_n)\delta(\varphi_m)$
are compact for $n\not=m$. 
Applying Lemma~\ref{lma.c6} and the first step of the proof, we get
$$
\sigma_\ess(\delta(\varphi))\cup\{0\}
=
\cup_{n=1}^N\sigma_\ess(\delta(\varphi_n))
=\cup_{n=1}^N
[-\alpha(\lambda_n)\varkappa_{\lambda_n}(\varphi),
\alpha(\lambda_n)\varkappa_{\lambda_n}(\varphi)].
$$
Since $\sigma_\ess$ is a closed set, we get $0\in\sigma_\ess(\delta(\varphi))$
and thus the required statement \eqref{a16} follows. 
\end{proof}

\begin{proof}[Proof of Theorem~\ref{th10}]
1. 
Let 
\begin{align*}
\Lambda_0&=\{\lambda\in\Delta \mid \abs{\varkappa_\lambda(\varphi)}\geq1\},
\\
\Lambda_n&=\{\lambda\in\Delta \mid 2^{-n}\leq\abs{\varkappa_\lambda(\varphi)}<2^{-n+1}\},
\quad
n=1,2,\dots.
\end{align*}
The set $\Lambda_n$ is finite for all $n\geq0$. 
It is easy to see that for each $n\geq0$ there exists a function 
$\varphi_n\in PC_0(\R)$ with $\singsupp \varphi_n=\Lambda_n$, 
$\supp \varphi_n\subset\Delta$, and 
\begin{gather}
\varkappa_{\lambda}(\varphi_n)=\varkappa_\lambda(\varphi)
\quad \forall \lambda\in\Lambda_n,
\notag
\\
\norm{\varphi_n}_\infty=\frac12\max_{\lambda\in\Lambda_n}
\abs{\varkappa_\lambda(\varphi)}\leq 2^{-n}.
\label{c24}
\end{gather}
With this choice, the series $\sum_{n\geq0} \varphi_n$
converges absolutely and uniformly on $\R$ and defines a 
function $f=\sum_{n\geq0}\varphi_n$
such that $f\in PC_0(\R)$ and the function 
$\zeta\overset{\rm def}{=}\varphi-f$ is in the class $C_0(\R)$. 

2. 
For a given $N\in\mathbb N$, write 
$$
\varphi=f_N+g_N+\zeta, 
\quad 
f_N=\sum_{n=0}^N \varphi_n, 
\quad 
g_N=\sum_{n=N+1}^\infty \varphi_n. 
$$
By Lemma~\ref{lma.c4}, the operator $\delta(\varphi_m)\delta(\varphi_n)$ is compact
for $n\not=m$. 
By the estimate \eqref{c24},  the series in the r.h.s. of 
$$
\delta(\varphi_m)\delta(g_N)
=
\sum_{n=N+1}^\infty \delta(\varphi_m)\delta(\varphi_n)
$$
converges in the operator norm, and so for any $m\leq N$
the operator $\delta(\varphi_m)\delta(g_N)$
is also compact. 
Applying Lemma~\ref{lma.c6} to the decomposition 
$\delta(\varphi)=\delta(f_N)+\delta(g_N)+\delta(\zeta)$ 
and subsequently using Lemma~\ref{lma.c7}, we get
\begin{align*}
\sigma_\ess(\delta(\varphi))\cup\{0\}
&=
\sigma_\ess(\delta(f_N))\cup\sigma_\ess(\delta(g_N))\cup\{0\}
\\
&=
\biggl(\bigcup_{n=0}^N \sigma_\ess(\delta(\varphi_n))\biggr)
\cup
\sigma_\ess(\delta(g_N))\cup\{0\}
\\
&=
\biggl(\bigcup_{\abs{\varkappa_\lambda(\varphi)}\leq 2^{1-N}}
[-\alpha(\lambda)\varkappa_\lambda(\varphi),
\alpha(\lambda)\varkappa_\lambda(\varphi)]\biggr)
\cup
\sigma_\ess(\delta(g_N)).
\end{align*}
Finally, by the estimate \eqref{c24}
we have $\norm{\delta(g_N)}\leq 2\norm{g_N}_\infty\leq 2^{1-N}$
and so $\sigma_\ess(\delta(g_N))\subset\{z\in\C\mid \abs{z}\leq 2^{1-N}\}$.
Since $N$ can be taken arbitrary large, we obtain
$$
\sigma_\ess(\delta(\varphi))\cup\{0\}
=
\cup_{\lambda\in\Delta}
[-\alpha(\lambda)\varkappa_\lambda(\varphi),
\alpha(\lambda)\varkappa_\lambda(\varphi)].
$$
Since $\sigma_\ess$ is a closed set, we get $0\in\sigma_\ess(\delta(\varphi))$
and thus the required statement \eqref{a16} follows. 
\end{proof}

\section*{Acknowledgements}
The author is grateful to Nikolai Filonov, Serge Richard and Dmitri Yafaev  
for the critical reading of the manuscript and making  a number of useful 
suggestions.

\end{document}